\newtheorem{theorem}{Theorem}[section]
\newtheorem{corollary}[theorem]{Corollary}
\newtheorem{lemma}[theorem]{Lemma}
\newtheorem{remark}[theorem]{Remark}
\newtheorem{definition}[theorem]{Definition}
\numberwithin{equation}{section}
\begin{document}

\title [elastic scattering by multiple particles]{A Fast Solver for the Elastic
Scattering of Multiple Particles}

\author{Jun Lai}
\address{School of Mathematical Sciences, Zhejiang University, Hangzhou,
Zhejiang 310027, China}
\email{laijun6@zju.edu.cn}

\author{Peijun Li}
\address{Department of Mathematics, Purdue University, West Lafayette, IN 47907,
USA.}
\email{lipeijun@math.purdue.edu}

\subjclass[2010]{35P25, 45A05, 74J20, 74B05}

\keywords{Elastic wave equation, elastic obstacle scattering, boundary integral
equation, fast multiple method, Helmholtz decomposition}

\begin{abstract}
Consider the elastic scattering of a time-harmonic wave by multiple well
separated rigid particles in two dimensions. To avoid using the complex Green's
tensor of the elastic wave equation, we utilize the Helmholtz decomposition to
convert the boundary value problem of the elastic wave equation into a
coupled boundary value problem of Helmholtz equations. Based on single, double,
and combined layer potentials with the simpler Green's function of the Helmholtz
equation, we present three different boundary integral equations for the coupled
boundary value problem. The well-posedness of the new integral equations are
established. Computationally, a scattering matrix based method is proposed to
evaluate the elastic wave for arbitrarily shaped particles. The method uses the
local expansion for the incident wave and the multipole expansion for the
scattered wave. The linear system of algebraic equations is solved by GMRES
with fast multipole method (FMM) acceleration. Numerical results show that
the method is fast and highly accurate for solving the elastic scattering
problem with multiple particles.  
\end{abstract}

\maketitle

\section{Introduction}

A basic problem in scattering theory is the scattering of a time-harmonic wave
by an impenetrable medium, which is referred to as the obstacle scattering
problem \cite{CK-83}. It has played a fundamental role in many scientific areas
including radar and sonar (e.g., submarine detection), nondestructive testing
(e.g., detection of fatigue cracks in aircraft wings), remote sensing (e.g.,
monitoring deforestation), medical imaging (brain tumor detection), and
geophysical exploration (e.g., oil detection). Driven by these significant
applications, the obstacle scattering problems have been widely studied by
numerous researchers for all the three commonly used wave models: the Helmholtz
equation (acoustic waves), the Maxwell equation (electromagnetic waves), and
the Navier equation (elastic waves). Consequently, a great deal of mathematical
and numerical results are available \cite{N-01}. Recently, the scattering
problems for elastic waves have received ever increasing attention in both
engineering and mathematical communities for their important applications in
geophysics and seismology \cite{AH-SIAP76, ABG-15, HKS-IP13, LL-86, L-SIAP12,
PV-JASA, TC-JCP07}. The propagation of elastic waves is governed by the Navier
equation which is complex because of the coexistence of compressional and
shear waves with different wavenumbers. 

In many applications it is desirable to develop a computational model to simulate
the wave propagation in a medium consisting of multiple
particles \cite{HLZ-JCP13, HL-MMS10, PS-PRD73, M-06, JZ-CMAME12}, including the
application of imaging a target in a cluttered environment \cite{BHLZ-CM14} and
the design of composite materials with a specific wave response\cite{GG-JCP13}.
In this paper, we consider the two-dimensional elastic scattering problem of a
time-harmonic wave by multiple rigid obstacles which are embedded in a
homogeneous and isotropic elastic medium. The obstacles
are assumed to be well separated in the sense that each obstacle can be
circumscribed by a circle and all the circles are disjoint. The method
of boundary integral equations is employed to solve the elastic obstacle
scattering problem. Compared to finite difference or finite element
methods\cite{BXY-JCP17}, the boundary
integral method enjoys several intrinsic advantages: the solution is
characterized solely in terms of surface distributions so that there are fewer
unknowns; the radiation condition is implicitly and exactly imposed so as to
avoid the error that is introduced by using artificial radiation conditions
\cite{GK-WM90, GK-JCP04}. However, the Green's function of the elastic wave
equation is a second order tensor and is complicated to compute in the boundary
integral equations \cite{BLR-JCP14, YHX-SINUM16, TC-JCP09}. To avoid this issue,
we introduce two scalar potential functions and use the Helmholtz
decomposition to split the displacement of the wave field into the compressional
wave and the shear wave which satisfies the Helmholtz equation,
respectively \cite{YLLY-CiCP18}. Therefore the boundary value problem of the
Navier equation is converted equivalently into a coupled boundary value problem
of the Helmholtz equations for the potentials. Since the Green's function of the
Helmholtz equation is much simpler than that of the Navier equation,
it is computationally much easier to solve the Helmholtz system than to solve
the vectorial Navier equation. This simplification from the elastic Green's
function to the Helmholtz Green's function, however, does not come without cost.
Since the principal part of resulted boundary integral system is degenerated,
the Fredholm alternative can not be applied directly to obtain the existence
result of the system. By analyzing the properties of integral operators
thoroughly and introducing appropriate regularizers, we prove the well-posedness
for three different boundary integral formulations which are based on using the
single, double, and combined layer potentials. The theoretical analysis lays a
foundation on the numerical implementation of solving the elastic wave equation
based on the Helmholtz decomposition.

In numerical practice, the advantages of boundary integral methods can be offset
by the high computational cost incurred in evaluating the mutual interactions
among all elements. Moreover, each interaction involves singular
integrals whose analytical and/or numerical evaluation is expensive.  
In this work, we propose a fast and highly accurate numerical method for solving
the elastic scattering problem with multiple particles. The method extends the
classic multiple scattering theory for acoustic and electromagnetic waves to
elastic waves. It can handle many particles that are arbitrarily shaped and
randomly located in a homogeneous medium. The idea goes back to \cite{GG-JCP13,
LKG-OE14, LKB-JCP15} for the electromagnetic scattering of multiple particles.
For a given particle, we first use the integral formulation, which is based on
the Helmholtz decomposition, to construct a scattering matrix, which
is a matrix that maps the incoming wave to the outgoing wave. An important
feature of the matrix is that it only depends on the physical property of the
particle and is independent of the location and rotation of the particle, which 
suggests if all the particles are identical, up to a shift and rotation, the
scattering matrix only has to be computed once. With this matrix precomputed, we
then treat the outgoing scattering coefficients, instead of the discretization
points on the boundary of particles, as the unknowns in our equation. When
particles are in sub-wavelength regime and are well separated, this is highly
accurate with only about 20 unknowns per particle. Therefore it greatly reduces
the number of unknowns especially for particles with complicated geometry.
Moreover, the resulted system based on outgoing coefficients can be
preconditioned by the scattering matrix and the GMRES iterative solver becomes
extremely efficient after the preconditioning. The algorithm is further
accelerated by the fast multipole method FMM \cite{R-90}. Numerical experiments
show that for a given order of accuracy, the number of iterations grows linearly
with respect to the angular frequency for a fixed number of particles, and
increases sublinearly with respect to the number of particles for a fixed
angular frequency. Hence, the method is well suited for the elastic scattering
problem with multiple particles.

The paper is organized as follows. In Section 2, we introduce the
model equation for the elastic scattering by multiple obstacles. In particular,
the Helmholtz decomposition is utilized to convert the elastic wave equation
into a coupled Helmholtz system. Section 3 gives some preliminaries for boundary
integral operators. Section 4 is devoted to three different boundary integral
formulations for the coupled Helmholtz system. Their well-posedness are proved
based on the regularization theory and Fredholm alternative. In Section 5, a
scattering matrix based numerical method is proposed for solving the coupled
integral equation. Numerical experiments are presented in Section 6 to show the
performance of the proposed method. The paper is concluded with some general
remarks and a direction for future work in Section 7.

\section{Problem formulation}

Let us first specify the problem geometry which is shown in Figure
\ref{pg}. Consider the scattering problem for some two-dimensional elastically
rigid obstacles, the union of which is represented by a bounded domain $D$ with boundary
$\Gamma$. The infinite exterior domain $\mathbb R^2\setminus\overline D$ is
assumed to be filled with a homogeneous and isotropic elastic medium. In
particular, we assume that the domain $D$ consists of $M$ inclusions $D_j, j=1,
\dots, M$ which are bounded with smooth boundaries $\Gamma_j$, i.e.,
$D=\cup_{j=1}^M D_j$ and $\Gamma_j=\cup_{j=1}^M \Gamma_j$. Moreover, the
obstacles are assumed to be
well-separated, i.e., there exist balls $B_j$ such that $\overline D_j\subset
B_j, j=1, \dots, M$ and $B_i\cap B_j=\emptyset$ for $i\neq j$. Denote by
$\nu=(\nu_1, \nu_2)$ and $\tau=(\tau_1, \tau_2)$ the unit normal and tangential
vectors on $\Gamma$, respectively, where $\tau_1=-\nu_2$ and $\tau_2=\nu_1$.

\begin{figure}
\centering
\includegraphics[width=0.31\linewidth]{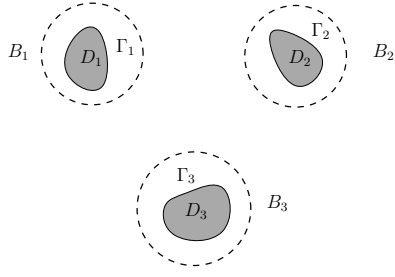}
\caption{Problem geometry of the elastic scattering by multiple obstacles.} 
\label{pg}
\end{figure}

Let the obstacles be illuminated by a time-harmonic plane wave ${\bf u}^{\rm
inc}$, which satisfies the two-dimensional Navier equation
\[
 \mu\Delta{\bf u}^{\rm inc}+(\lambda+\mu)\nabla\nabla\cdot{\bf
u}^{\rm inc}+\omega^2{\bf u}^{\rm inc}=0\quad\text{in}~\mathbb
R^2\setminus\overline D,
\]
where $\omega>0$ is the angular frequency and $\mu, \lambda$ are the
Lam\'{e} constants satisfying $\mu>0, \lambda+\mu>0$. It can be verified that
the incident wave ${\bf u}^{\rm inc}$ has the explicit expression
\[
 {\bf u}^{\rm inc}(x)=d e^{{\rm i}k_p x\cdot d}\quad\text{or}\quad
{\bf u}^{\rm inc}(x)=d^\perp e^{{\rm i}k_s x\cdot d},
\]
where the former is called the compressional plane wave and the latter is
referred to as the shear plane wave. Here $d=(\cos\theta, \sin\theta)$ is the
unit propagation direction vector, $\theta\in [0, 2\pi)$ is the incident angle,
$d^\perp=(-\sin\theta, \cos\theta)$ is an orthonormal vector of $d$, and 
\[
k_p=\omega/(\lambda+2\mu)^{1/2},\quad k_s=\omega/\mu^{1/2}
\]
are the compressional wavenumber and the shear wavenumber, respectively. More
generally, the incident field can be a linear combination of the compressional
and shear plane waves. 

The displacement of the total wave field ${\bf u}$ also satisfies the
Navier equation
\[
 \mu\Delta{\bf u}+(\lambda+\mu)\nabla\nabla\cdot{\bf
u}+\omega^2{\bf u}=0\quad\text{in}~\mathbb R^2\setminus\overline D.
\]
By assuming that each obstacle is impenetrable and rigid, we have 
\[
 {\bf u}=0\quad\text{on}~\Gamma. 
\]
The total field ${\bf u}$ consists of the incident field ${\bf
u}^{\rm inc}$ and the scattered field ${\bf v}$:
\[
 {\bf u}={\bf u}^{\rm inc}+{\bf v}. 
\]
It is easy to verify that the scattered field ${\bf v}$ satisfies the
Navier equation
\begin{equation}\label{nev}
\mu\Delta{\bf v}+(\lambda+\mu)\nabla\nabla\cdot{\bf
v}+\omega^2{\bf v}=0\quad\text{in}~\mathbb R^2\setminus\overline D
\end{equation}
and the boundary condition
\begin{equation}\label{bcv}
 {\bf v}=-{\bf u}^{\rm inc}\quad\text{on}~\Gamma. 
\end{equation}

Given a vector function ${\bf w}=(w_1, w_2)$ and a scalar function $w$,
define the scalar and vector curl operators
\[
 {\rm curl}{\bf w}=\partial_{x_1} w_2 - \partial_{x_1} w_1, \quad {\bf
curl}w=(\partial_{x_2}w, -\partial_{x_1}w).
\]
Let ${\bf v}$ be a solution of \eqref{nev}. Denote 
\[
{\bf v}_p=-\frac{1}{k_p^2}\nabla\nabla\cdot{\bf v},\quad
{\bf v}_s=\frac{1}{k_s^2}{\bf curl}{\rm curl}{\bf v},
\]
which are known as the compressional component and the shear component of
${\bf v}$, respectively. Since the scattering problem is imposed in the
open domain $\mathbb R^2\setminus\overline D$, the scattered field ${\bf
v}$ is required to satisfy the Kupradz--Sommerfeld radiation condition
\cite{S-49}, i.e., the components ${\bf v}_p$ and ${\bf v}_s$ are required to
satisfy the Sommerfeld radiation condition: 
\[
 \partial_\rho{\bf v}_p -{\rm i}k_p{\bf v}_p=o(\rho^{-1/2}),\quad
\partial_\rho{\bf v}_s -{\rm i}k_s{\bf v}_s=o(\rho^{-1/2}),\quad \rho=|x|.
\]

For any solution ${\bf v}$ of the elastic wave equation \eqref{nev}, we
introduce the Helmholtz decomposition
\begin{equation}\label{hd}
 {\bf v}=\nabla\phi+{\bf curl}\psi,
\end{equation}
where the scalar functions $\phi$ and $\psi$ are called potentials. Substituting
\eqref{hd} into \eqref{nev} yields 
\[
 \nabla((\lambda+2\mu)\Delta\phi+\omega^2\phi)+{\bf
curl}(\mu\Delta\psi+\omega^2\psi)=0\quad\text{in}~\mathbb
R^2\setminus\overline D,
\]
which is fulfilled if $\phi, \psi$ satisfy the Helmholtz equation
\[
 \Delta\phi+k_p^2\phi=0,\quad \Delta\psi+k_s^2\psi=0\quad\text{in}~\mathbb
R^2\setminus\overline
D.
\]
In addition, the potentials $\phi, \psi$ are required to satisfy
the Sommerfeld radiation condition
\[
 \partial_\rho\phi-{\rm i}k_p\phi=o(\rho^{-1/2}), \quad  \partial_\rho\psi-{\rm
i}k_s\psi=o(\rho^{-1/2}).
\]
Combining \eqref{bcv} and \eqref{hd} yields the boundary condition
\[
 {\bf v}=\nabla\phi+{\bf curl}\psi=-{\bf u}^{\rm
inc}\quad\text{on}~\Gamma.
\]
Taking the dot product of the above equation with $\nu$ and $\tau$,
respectively, and noting $\tau_1=-\nu_2, \tau_2=\nu_1$, we obtain a coupled
boundary condition for $\phi_1, \phi_2$ on $\Gamma$:
\[
 \partial_\nu\phi+\partial_\tau \psi=f,\quad
\partial_\tau\phi - \partial_\nu\psi=g,
\]
where 
\[
 f=-\nu\cdot{\bf u}^{\rm inc},\quad g=-\tau\cdot{\bf u}^{\rm
inc}. 
\]

Hence the obstacle scattering problem for elastic waves can be reduced
equivalently to the coupled boundary value problem of the Helmholtz equations:
\begin{equation}\label{bvp}
\begin{cases}
 \Delta\phi+k_p^2\phi=0,\quad \Delta\psi+k_s^2\psi=0 &\quad\text{in}~\mathbb
R^2\setminus\overline D,\\
\partial_\nu\phi+\partial_\tau \psi=f,\quad
\partial_\tau\phi - \partial_\nu\psi=g &\quad\text{on}~\Gamma,\\
\partial_\rho\phi-{\rm i}k_p\phi=o(\rho^{-1/2}), \quad \partial_\rho\psi-{\rm
i}k_s\psi=o(\rho^{-1/2})
&\quad\text{as}~\rho\to\infty.
\end{cases}
\end{equation}
The proof can be found in \cite{LWWZ-IP16} for the well-posedness of the above
scattering problem \eqref{bvp} by using the variational approach. In this work,
our goal is to develop a new and well-posed boundary integral equation, and
propose a fast numerical method to the scattering problem \eqref{bvp}. Hence we
assume that the boundary value problem \eqref{bvp} has a unique solution. 

\begin{theorem}\label{uniqueresult}
 The coupled Helmholtz system \eqref{bvp} has at most one solution for $k_s>0$ and $k_p> 0$. 
\end{theorem}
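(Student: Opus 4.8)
The plan is to prove uniqueness by showing that if $f=g=0$ on $\Gamma$, then the only solution of \eqref{bvp} is $\phi=\psi=0$, which by the Helmholtz decomposition \eqref{hd} forces the scattered field $\mathbf v$ to vanish. Let me sketch the standard Rellich-type argument.

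First I would reconstruct the displacement field. Given potentials $\phi,\psi$ solving the homogeneous system (with $f=g=0$), set $\mathbf v=\nabla\phi+\mathbf{curl}\,\psi$. Because $\phi$ and $\psi$ satisfy the Helmholtz equations with wavenumbers $k_p,k_s$ and the Sommerfeld radiation conditions, $\mathbf v$ solves the homogeneous Navier equation \eqref{nev} in $\mathbb R^2\setminus\overline D$, satisfies the Kupradze--Sommerfeld radiation condition, and—since the homogeneous boundary conditions $\partial_\nu\phi+\partial_\tau\psi=0$ and $\partial_\tau\phi-\partial_\nu\psi=0$ are exactly $\mathbf v\cdot\nu=0$ and $\mathbf v\cdot\tau=0$—satisfies $\mathbf v=0$ on $\Gamma$. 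Thus $\mathbf v$ is a radiating solution of the exterior rigid (Dirichlet) elastic scattering problem with zero boundary data.

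Next I would apply the standard uniqueness result for the exterior elastic Dirichlet problem. The key step is an energy identity: take a large ball $B_R$ of radius $R$ containing $\overline D$, multiply \eqref{nev} by $\overline{\mathbf v}$, integrate over $B_R\setminus\overline D$, and use Betti's formula (integration by parts for the Navier operator). The boundary term on $\Gamma$ vanishes because $\mathbf v=0$ there, leaving only the integral over the outer sphere $\partial B_R$. Decomposing that boundary integral through the compressional and shear parts $\mathbf v_p,\mathbf v_s$ and invoking the radiation conditions, one shows $\lim_{R\to\infty}\int_{\partial B_R}\bigl(|\mathbf v_p|^2+|\mathbf v_s|^2\bigr)\,ds=0$. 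Rellich's lemma, applied separately to the scalar Helmholtz components $\phi$ and $\psi$ (equivalently to $\mathbf v_p$ and $\mathbf v_s$), then forces $\mathbf v_p=\mathbf v_s=0$ in the unbounded component of the exterior, and unique continuation propagates this to all of $\mathbb R^2\setminus\overline D$, giving $\mathbf v\equiv0$.

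Finally, from $\mathbf v=\nabla\phi+\mathbf{curl}\,\psi\equiv0$ I would recover $\phi=\psi=0$. Taking the divergence and scalar curl of $\mathbf v=0$ gives $\Delta\phi=0$ and $\Delta\psi=0$; combined with the Helmholtz equations this yields $k_p^2\phi=0$ and $k_s^2\psi=0$, so $\phi=\psi=0$ since $k_p,k_s>0$. The main obstacle is the energy-identity step: one must correctly handle the coupling between the $p$- and $s$-components in the Betti integration-by-parts and verify that the cross terms on $\partial B_R$ have the right sign (or vanish in the limit) so that Rellich's lemma applies to each component. I would expect the cleanest route is to work directly with the decoupled scalar potentials $\phi,\psi$, proving each satisfies the homogeneous Helmholtz exterior problem whose radiating solutions vanish, rather than with the vector field—though one must be careful that the boundary conditions on $\Gamma$ genuinely decouple, which they do not pointwise, so the vector formulation via Betti's identity is likely the safer path.
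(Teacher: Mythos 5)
Your proposal is essentially correct, but it necessarily differs from the paper, because the paper offers no proof of this theorem at all: it defers to the variational argument of \cite{LWWZ-IP16}. Your route transfers the problem back to the elastic field: you verify that $\mathbf v=\nabla\phi+\mathbf{curl}\,\psi$ solves the homogeneous Navier equation, that the homogeneous coupled boundary conditions are precisely $\nu\cdot\mathbf v=\tau\cdot\mathbf v=0$ on $\Gamma$, and that $\mathbf v_p=\nabla\phi$, $\mathbf v_s=\mathbf{curl}\,\psi$ inherit the Kupradze--Sommerfeld condition; the classical uniqueness theorem for the exterior rigid (Dirichlet) elastic problem (in two dimensions, Ahner--Hsiao \cite{AH-SIAP76}) then gives $\mathbf v\equiv 0$, and your recovery step ($\Delta\phi=\nabla\cdot\mathbf v=0$ together with $\Delta\phi=-k_p^2\phi$ forces $\phi=0$; likewise $\mathrm{curl}\,\mathbf v=-\Delta\psi=0$ forces $\psi=0$) is sound. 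What this buys is a reduction to a citable classical result; what it costs is exactly the two technical debts you flag: the P/S cross terms on $\partial B_R$ in Betti's identity, and the (standard, proved by differentiating the far-field asymptotics) fact that gradients and curls of radiating scalar fields are themselves radiating. By contrast, the scalar variational route of \cite{LWWZ-IP16} --- which your closing paragraph dismisses too quickly --- stays with the potentials and is actually cleaner: in Green's identity over $B_R\setminus\overline D$, the coupled boundary terms on the closed curve $\Gamma$ combine, after tangential integration by parts, into
\begin{equation*}
\int_\Gamma\bigl(\bar\phi\,\partial_\nu\phi+\bar\psi\,\partial_\nu\psi\bigr)\,ds
=\int_\Gamma\bigl(-\bar\phi\,\partial_\tau\psi+\bar\psi\,\partial_\tau\phi\bigr)\,ds
=-2\int_\Gamma\Re\bigl(\bar\phi\,\partial_\tau\psi\bigr)\,ds,
\end{equation*}
which is real; hence $\Im\int_{\partial B_R}\bigl(\bar\phi\,\partial_\rho\phi+\bar\psi\,\partial_\rho\psi\bigr)\,ds=0$ for every $R$, while the radiation conditions force the $\liminf$ of each individual imaginary flux to be nonnegative, so both fluxes tend to zero and Rellich's lemma applies to $\phi$ and $\psi$ separately --- no elastic machinery and no cross terms at infinity. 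One last small slip in your write-up: Rellich's lemma plus analyticity propagates vanishing only through the connected component of $\mathbb R^2\setminus\overline D$ containing infinity, so you should note explicitly (as is implicit in the paper's geometry) that the exterior domain is connected.
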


\section{Preliminaries of integral operators}

Let $\Gamma\subset\mathbb R^2$ be a smooth closed curve. Consider the
integral operators of the form \begin{eqnarray}\label{intequ1}
F(x) = \int_{\Gamma}K(x,x-y)\phi(y)ds(y)
\end{eqnarray}
and its adjoint with respect to $L^2(\Gamma)$
\begin{eqnarray}\label{intequ2}
G(x) = \int_{\Gamma}K(y,y-x)\phi(y)ds(y),
\end{eqnarray} 
where $K$ is an integral kernel and $\phi$ is called the density. The
following theorem can be found in \cite{N-01}.

\begin{theorem}\label{thm1}
Let $\alpha=(\alpha_1, \alpha_2)$ be a multi-index and $\beta$ be a positive
integer. Assume that the kernel $K$ in \eqref{intequ1}--\eqref{intequ2} is given
by 
\[
K(x,y)=h(x) y^{\alpha}|y|^{2\beta}\ln|y|,
\]
where $h(x)$ is a smooth function
defined on $\Gamma$. Then the kernel is of class $m=-(|\alpha|+2\beta+1)$. The
integral operator in \eqref{intequ1}--\eqref{intequ2} associated with the
kernel $K$ is continuous from $H^r(\Gamma)$ into $H^{r+m}(\Gamma)$ for any real
$r$.
\end{theorem}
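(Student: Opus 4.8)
The plan is to identify the operator as a pseudodifferential operator on $\Gamma$ of order $-(|\alpha|+2\beta+1)$ and then apply the standard Sobolev mapping theorem; because $\Gamma$ is a smooth compact curve, this is cleanest in the framework of pseudo-homogeneous kernels. First I would strip off the inessential factors. Multiplication by the smooth function $h$ is bounded on $H^r(\Gamma)$ for every real $r$, so in \eqref{intequ1} it may be factored out; for the adjoint \eqref{intequ2} the factor $h(y)$ sits under the integral but is again a bounded multiplier, and since $(y-x)^\alpha=(-1)^{|\alpha|}(x-y)^\alpha$ and $|y-x|=|x-y|$, the kernel of $G$ has the same diagonal structure as that of $F$. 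As the $L^2(\Gamma)$-adjoint of a $\Psi$DO of order $\rho$ is again of order $\rho$, it suffices to treat $F$ with the reduced kernel $(x-y)^\alpha|x-y|^{2\beta}\ln|x-y|$. Next I would localize: cover $\Gamma$ by finitely many coordinate charts, take a subordinate smooth partition of unity, and parametrize each chart by arc length via a smooth diffeomorphism. This turns $H^r(\Gamma)$ into ordinary Sobolev spaces and preserves the order, while contributions with $x,y$ in disjoint charts have $C^\infty$ kernels, hence are smoothing of infinite order and are harmless.

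The heart of the matter is the diagonal behavior. Writing $x=\gamma(s)$, $y=\gamma(t)$ in arc length and Taylor expanding gives $\gamma(s)-\gamma(t)=(s-t)\tau(t)+O\bigl((s-t)^2\bigr)$ and $|x-y|=|s-t|\bigl(1+O((s-t)^2)\bigr)$, whence
\[
(x-y)^\alpha|x-y|^{2\beta}\ln|x-y|=\tau(t)^\alpha\,(s-t)^{|\alpha|+2\beta}\ln|s-t|+R(s,t),
\]
where $\tau(t)^\alpha$ is smooth and $R$ collects terms of strictly higher homogeneous degree in $s-t$ (carrying at most logarithmic factors) together with a $C^\infty$ piece. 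Thus, modulo genuinely smoother operators, $F$ reduces to the one-dimensional convolution operator with kernel $u^{|\alpha|+2\beta}\ln|u|$ (recall $|u|^{2\beta}=u^{2\beta}$ on the line).

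I would then read off the order from the Fourier symbol of this model kernel: with $p=|\alpha|+2\beta$,
\[
\mathcal{F}\bigl[\,u^{p}\ln|u|\,\bigr](\xi)=c_p\,|\xi|^{-(p+1)}+\text{lower order},
\]
so the symbol decays exactly like $|\xi|^{-(|\alpha|+2\beta+1)}$ and the model operator is a $\Psi$DO of order $-(|\alpha|+2\beta+1)=m$. The higher-degree terms in $R$ yield symbols with even faster decay and the $C^\infty$ remainder maps into $H^\infty$, so neither affects the order. The standard mapping theorem then gives continuity from $H^r(\Gamma)$ into $H^{r+|\alpha|+2\beta+1}(\Gamma)=H^{r-m}(\Gamma)$, and a fortiori into the larger space $H^{r+m}(\Gamma)$, which is the assertion.

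The step I expect to be the main obstacle is precisely this symbol computation, and in particular the role of the logarithm. Since $\beta\ge 1$ is an integer, the prefactor $(x-y)^\alpha|x-y|^{2\beta}$ is a polynomial in the components of $x-y$ and contributes no singularity whatsoever; the entire non-smoothness, and hence the entire smoothing order of the operator, is carried by $\ln|x-y|$. Without the logarithm the model kernel would be a pure monomial, whose symbol is a combination of derivatives of the Dirac mass and which represents a local (differential) operator rather than a smoothing one. The logarithm is exactly what promotes the monomial $u^{p}$ to a homogeneous distribution with a genuine $|\xi|^{-(p+1)}$ symbol, and pinning down this clean power law while controlling the subleading logarithmic corrections is the technical crux; once it is in hand the Sobolev estimate is automatic. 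Some routine bookkeeping remains to confirm that the curvature-induced remainder $R$ and the partition-of-unity cutoffs are genuinely of lower order, but this presents no difficulty beyond the model computation.
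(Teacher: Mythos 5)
The paper does not actually prove this theorem: it is quoted from N\'ed\'elec \cite{N-01} (``The following theorem can be found in...''), so there is no internal proof to compare against. Your argument is essentially the standard proof behind that citation, and its main steps are sound: localize to arc-length charts (off-diagonal pieces having $C^\infty$ kernels), reduce modulo smoother terms to the model convolution kernel $u^{p}\ln|u|$ with $p=|\alpha|+2\beta$, identify its symbol as $c_p\,\mathrm{sgn}(\xi)^{p}|\xi|^{-(p+1)}$ modulo terms supported at $\xi=0$ (derivatives of the Dirac mass, i.e.\ polynomial and hence finite-rank, infinitely smoothing kernels), and apply the Sobolev mapping theorem for pseudodifferential operators of order $-(p+1)$. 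You also implicitly repaired the statement's sign convention, which is the right thing to do: as literally written (with $m<0$ and target space $H^{r+m}$) the claim is nearly vacuous, and what the paper actually uses later --- e.g.\ Corollary \ref{coro1}, where a kernel of class $-3$ yields boundedness $H^{r}\to H^{r+3}$ --- is the stronger mapping $H^{r}\to H^{r+|\alpha|+2\beta+1}$, which is exactly what your symbol computation delivers; your ``a fortiori'' sentence then disposes of the literal wording.

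Two caveats, neither fatal. First, your closing intuition about the logarithm is backwards: without the log, $(x-y)^{\alpha}|x-y|^{2\beta}$ is a polynomial in the components of $x-y$, hence a $C^\infty$ kernel on $\Gamma\times\Gamma$, and the resulting operator is finite-rank and infinitely smoothing --- not ``a local (differential) operator.'' Differential operators correspond to kernels concentrated on the diagonal (derivatives of the Dirac mass in $u$, symbols polynomial in $\xi$), which is the dual of the situation you describe; the log does not ``promote'' the monomial to a smoothing kernel but rather caps the smoothing at the finite order $p+1$. This slip is confined to a side remark and even contradicts your own correct observation that the $C^\infty$ remainder maps into $H^\infty$, so it does not damage the proof. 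Second, the ``routine bookkeeping'' deserves one more sentence than you give it: to obtain the conclusion for every real $r$, the Taylor expansion of the smooth coefficient $c(s,t)$ multiplying $(s-t)^{p}\ln|s-t|$ must be iterated to arbitrary finite order, since a single expansion leaves only a finitely smooth remainder; each iterate produces a model kernel of strictly higher class plus an eventually-$C^N$ error, and one lets $N\to\infty$ relative to the fixed $r$. This is indeed standard and goes through.
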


Consider the two-dimensional Helmholtz equation
\begin{equation}\label{he}
\Delta u + k^2 u = 0\quad\text{in}~\mathbb R^2,
\end{equation}
where $k$ is the wavenumber satisfying $\Re(k)>0, \Im(k)\geq 0$.
It is known that the Green's function of \eqref{he} is 
\[ 
\Phi_k(x,y) = \frac{\rm i}{4}H^{(1)}_0(k|x-y|),
\]
where $H^{(1)}_0$ is the Hankel function of the first kind with order zero. 

Given a bounded domain $D\subset\mathbb R^2$ with smooth boundary $\Gamma$, let
$\nu$ and $\tau$ be the exterior unit normal vector and the unit tangential
vector of $\Gamma$, respectively. For $x\notin\Gamma$, define the single and
double layer potentials
\begin{align*}
\mathcal{S}_k\phi(x) &= \int_{\Gamma} \Phi_k(x,y)\phi(y)ds(y), \\
\mathcal{D}_k\phi(x) &= \int_{\Gamma} \frac{\partial \Phi_k(x,y)}{\partial
\nu(y)}\phi(y)ds(y),
\end{align*}
and the tangential boundary layer potential
\[
\mathcal{H}_k\phi(x) = \int_{\Gamma} \frac{\partial \Phi_k(x,y)}{\partial
\tau(y)}\phi(y)ds(y).
\]
For $k=0$, these potentials denote the layer potentials corresponding
to the two-dimensional Laplace equation where the Green's function is 
\[
 \Phi_0(x, y)=-\frac{1}{2\pi}\ln|x-y|. 
\]
Moreover, these potentials satisfy the well-known jump relations
\cite{CK-83}:
\begin{subequations}\label{jr}
\begin{align}
\lim_{x\rightarrow \Gamma^{\pm}}\mathcal{S}_k\phi(x) &= S_k\phi(x) =
\int_{\Gamma} \Phi_k(x,y)\phi(y)ds(y), \\
\lim_{x\rightarrow \Gamma^{\pm}}\mathcal{D}_k\phi(x) &= (\pm \frac{1}{2}+
D_k)\phi(x) = \pm \frac{1}{2}\phi(x)+\int_{\Gamma} \frac{\partial
\Phi_k(x,y)}{\partial \nu(y)}\phi(y)ds(y), \\
\lim_{x\rightarrow \Gamma^{\pm}}\frac{\partial
\mathcal{S}_k\phi(x)}{\partial \nu(x)} &=
(\mp \frac{1}{2}+ D'_k)\phi(x) = \mp \frac{1}{2}\phi(x)+\int_{\Gamma}
\frac{\partial \Phi_k(x,y)}{\partial \nu(x)}\phi(y)ds(y), \\
\lim_{x\rightarrow \Gamma^{\pm}} \mathcal{H}_k\phi(x) &=
H_k\phi(x)=\int_{\Gamma} \frac{\partial \Phi_k(x,y)}{\partial
\tau(y)}\phi(y)ds(y),
\end{align} 
\end{subequations}
where the plus sign means that $x$ approaches $\Gamma$ from the exterior and
the minus sign stands for that $x$ approaches $\Gamma$ from the interior. The
boundary operators $D_k$, $D'_k$, and $H_k$ are defined in the
sense of Cauchy principal value. In $L^2(\Gamma)$, $S_k$ is self-adjoint, i.e.,
$S_k=S_k'$, and $D'_k$ is the adjoint of $D_k$. The adjoint of $H_k$ is given
by 
\[
H'_k\phi(x) = \frac{\partial S_k\phi(x)} {\partial \tau(x)}  = \int_{\Gamma}
\frac{\partial \Phi_k(x,y)}{\partial \tau(x)}\phi(y)ds(y).
\]

For further investigation, it is indispensable to study the regularity of all
these boundary operators. We begin with the asymptotic form of the
Green function $\Phi_k(x,y)$ which can be found in \cite{OLBC-10}. 

\begin{lemma}\label{asyform}
When $k>0$, the Green's function $\Phi_k$ has the expansion
\[
\Phi_k(x,y)  =  \Phi_0(x,y) -\frac{k^2|x-y|^2}{4}\Phi_0(x,y) +
|x-y|^4p_1(|x-y|^2)\Phi_0(x,y) + p_2(|x-y|^2),
\]
where $p_1(x)$ and $p_2(x)$ are analytic functions.  
\end{lemma}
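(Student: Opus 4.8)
The plan is to derive the expansion directly from the classical power series of the Bessel functions, exploiting that $\Phi_k(x,y) = \frac{\rm i}{4}H^{(1)}_0(k|x-y|)$ and $H^{(1)}_0 = J_0 + {\rm i}Y_0$. Writing $r = |x-y|$ and $z = kr$, I would recall from \cite{OLBC-10} the entire power series
\[
J_0(z) = \sum_{m=0}^\infty \frac{(-1)^m}{(m!)^2}\left(\frac{z}{2}\right)^{2m}
\]
together with the standard small-argument expansion of the Neumann function,
\[
Y_0(z) = \frac{2}{\pi}\left(\ln\frac{z}{2}+\gamma\right)J_0(z) + \frac{2}{\pi}\sum_{m=1}^\infty \frac{(-1)^{m+1}}{(m!)^2}\left(\sum_{l=1}^m \frac{1}{l}\right)\left(\frac{z}{2}\right)^{2m},
\]
where $\gamma$ is the Euler--Mascheroni constant. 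Since $J_0$ is entire, both series converge for all $z$, so every power series in $z^2 = k^2 r^2$ that appears below is in fact an analytic function of $r^2$.

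Next I would isolate the logarithmic contribution. Substituting $z = kr$ gives $\ln(z/2) = \ln r + \ln(k/2)$, so the only term carrying $\ln r$ is $\frac{2}{\pi}\ln r\, J_0(kr)$ inside $Y_0$. Using $\Phi_k = \frac{\rm i}{4}(J_0 + {\rm i}Y_0) = \frac{\rm i}{4}J_0(kr) - \frac{1}{4}Y_0(kr)$, the logarithmic piece becomes
\[
-\frac{1}{4}\cdot\frac{2}{\pi}\ln r\, J_0(kr) = -\frac{1}{2\pi}\ln r\, J_0(kr) = \Phi_0(x,y)\,J_0(kr),
\]
since $\Phi_0(x,y) = -\frac{1}{2\pi}\ln r$. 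Then I would expand $J_0(kr) = 1 - \frac{k^2 r^2}{4} + r^4 p_1(r^2)$, where $p_1(t) = \sum_{m\ge 2} \frac{(-1)^m k^{2m}}{(m!)^2\, 2^{2m}}\, t^{m-2}$ is entire. Multiplying by $\Phi_0$ produces precisely the first three terms of the claimed expansion, namely $\Phi_0 - \frac{k^2 r^2}{4}\Phi_0 + r^4 p_1(r^2)\Phi_0$.

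Everything that remains is non-logarithmic: the term $\frac{\rm i}{4}J_0(kr)$, the constant multiple $-\frac{1}{2\pi}(\ln(k/2)+\gamma)J_0(kr)$, and the non-logarithmic sum coming from $Y_0$. Each of these is a convergent power series in $r^2$, and I would collect them into the single analytic function $p_2(r^2)$, which completes the identification of the expansion.

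The computation itself is routine; the only point requiring care is the bookkeeping that separates the logarithmic and analytic parts of $Y_0$, since one must verify that the coefficient of $\ln r$ is exactly $J_0(kr)$, with no spurious polynomial corrections, so that it pairs cleanly with $\Phi_0$. Once that is checked, the analyticity of $p_1$ and $p_2$ follows immediately from the fact that $J_0$ is entire, so the corresponding series in $r^2$ have infinite radius of convergence.
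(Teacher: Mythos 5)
Your proposal is correct, and it is essentially the argument the paper relies on: the paper gives no proof of this lemma at all, simply citing the NIST Handbook \cite{OLBC-10}, and what that citation encapsulates is precisely your computation --- the classical small-argument series for $J_0$ and $Y_0$, the identification of the logarithmic part of $H_0^{(1)}(kr)$ as $\frac{2}{\pi}\ln r\, J_0(kr)$ (so that it pairs with $\Phi_0$), and the absorption of the remaining even power series in $r$ into analytic functions of $r^2$. Your sign and coefficient bookkeeping (in particular that the $m=1$ term of $\Phi_0 J_0(kr)$ yields $-\frac{k^2|x-y|^2}{4}\Phi_0$ exactly) checks out, so your write-up is a valid, self-contained replacement for the citation.
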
 

Combining Theorem \ref{thm1} and Lemma \ref{asyform}, we obtain several useful
properties for the integral operators. The following results are related to the
regularity of boundary operators $D_k$, $H_k$ and their adjoint.

\begin{corollary}\label{coro1}
The following operators are bounded:
\begin{align*}
& D_0, D'_0: H^{r}(\Gamma) \rightarrow H^{r+s}(\Gamma),\\
& H_0, H'_0: H^{r}(\Gamma) \rightarrow H^{r}(\Gamma),\\
& D_k, D'_k: H^{r}(\Gamma) \rightarrow H^{r+3}(\Gamma),\\
& H_k, H'_k: H^{r}(\Gamma) \rightarrow H^{r}(\Gamma),
\end{align*}
where $r$ is an arbitrary real number and $s$ is an arbitrary positive real
number. 
\end{corollary}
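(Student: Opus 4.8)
The plan is to use the splitting provided by Lemma \ref{asyform}, writing $\Phi_k = \Phi_0 + R_k$ with
\[
R_k(x,y) = \frac{k^2}{8\pi}|z|^2\ln|z| - \frac{1}{2\pi}|z|^4 p_1(|z|^2)\ln|z| + p_2(|z|^2), \qquad z = x-y,
\]
and to analyze each boundary operator as the sum of its Laplace ($k=0$) part plus the contribution of $R_k$. Since every $y$-dependent geometric quantity appearing in the kernels ($\nu(y)$, $\tau(y)$, and the Jacobian of the arc-length parametrization) is smooth on $\Gamma$, I would Taylor-expand it in $z$ about the diagonal $y=x$; this transfers the $y$-dependence to a smooth function of $x$ at the cost of extra powers of $z$, which only raise $|\alpha|$ and hence lower the kernel class. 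Each resulting term is then of the form $h(x)z^{\alpha}|z|^{2\beta}\ln|z|$ plus a smooth remainder, so Theorem \ref{thm1} applies term by term and the mapping property is governed by the most singular term. Throughout I would handle $D'_k$ and $H'_k$ either by the same computation with the roles of $x$ and $y$ interchanged, or more cleanly by duality: the $L^2(\Gamma)$-adjoint of an operator continuous from $H^{r}(\Gamma)$ to $H^{r+m}(\Gamma)$ for every real $r$ is again continuous from $H^r(\Gamma)$ to $H^{r+m}(\Gamma)$ for every $r$, so $D'_k$, $H'_k$ inherit the orders of $D_k$, $H_k$.

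For the Laplace double-layer operators $D_0, D'_0$ the key is the classical geometric cancellation on a smooth curve: the normal component of the chord satisfies $(x-y)\cdot\nu(y) = O(|x-y|^2)$, so that the kernel $\partial_{\nu(y)}\Phi_0(x,y) = \tfrac{1}{2\pi}\,(z\cdot\nu(y))/|z|^2$ extends to a $C^\infty$ function of $(x,y)$ whose diagonal value is proportional to the curvature $\kappa(x)$, the apparent singularity being removable. An operator with a smooth kernel is smoothing of infinite order, hence maps $H^r(\Gamma)$ into $H^{r+s}(\Gamma)$ for every $s>0$, which is exactly the asserted property for $D_0$ and $D'_0$.

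For the tangential operators $H_0, H'_0$ there is no such cancellation: the tangential component of the chord is genuinely first order, $(x-y)\cdot\tau(y) = O(|x-y|)$, so the kernel $\partial_{\tau(y)}\Phi_0$ behaves like $1/|x-y|$ and defines a Cauchy-type principal-value operator of order $0$. After the arc-length change of variables its principal part is, up to a smooth factor, the Hilbert transform on the curve, which is bounded on $H^r(\Gamma)$ for every real $r$, while the remainder is weakly singular and hence smoothing. This yields $H_0, H'_0 : H^r(\Gamma)\to H^r(\Gamma)$.

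Finally, for $k>0$ I would add the contribution of $R_k$. Differentiating the leading term $\tfrac{k^2}{8\pi}|z|^2\ln|z|$ of $R_k$ along $\nu(y)$ gives, after using $\partial_{\nu(y)}|z|^2 = -2\,z\cdot\nu(y)$ together with the cancellation $z\cdot\nu(y)=O(|z|^2)$, a kernel whose most singular piece is a quadratic form in $z$ times $\ln|z|$, i.e.\ of class $-3$; the remaining terms of $R_k$ are of class $\le -4$ or smooth. Since $D_0, D'_0$ are infinitely smoothing, the order-$3$ contribution is the bottleneck and gives $D_k, D'_k : H^r(\Gamma)\to H^{r+3}(\Gamma)$. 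Differentiating the same term along $\tau(y)$ instead produces $z\cdot\tau(y)=O(|z|)$, hence a kernel of class $-2$; this is strictly more regular than the order-$0$ operator $H_0$, so $H_k$ equals $H_0$ plus a class-$(-2)$ operator and remains of order $0$, giving $H_k, H'_k : H^r(\Gamma)\to H^r(\Gamma)$. The main obstacle is the bookkeeping of the kernel class after one differentiation: the crux is to exploit the cancellation $z\cdot\nu(y)=O(|z|^2)$ (absent for $\tau$) carefully enough to see that the normal derivative of $R_k$ lands in class exactly $-3$, neither better nor worse, and to justify the order-$0$ boundedness of the Cauchy-singular part of $H_0$, which lies just outside the weakly-singular scope of Theorem \ref{thm1} and requires the classical Sobolev theory of singular integrals on curves.
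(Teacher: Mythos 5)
Your proposal is correct and follows essentially the same route as the paper: for $D_k, D'_k$ you use the expansion of Lemma \ref{asyform}, isolate the leading correction $\tfrac{k^2}{4}|x-y|^2\Phi_0$, exploit the cancellation $\nu(y)\cdot(x-y)=O(|x-y|^2)$ to place the resulting kernel in class $-3$, and invoke Theorem \ref{thm1}, which is exactly the paper's argument, with adjoints handled the same way. The only difference is completeness: the paper proves only the $D_k, D'_k$ mapping properties and cites \cite{CK-83} for $D_0, D'_0, H_0, H'_0, H_k, H'_k$, whereas you supply the standard arguments for these as well (smooth-kernel smoothing for $D_0$, Hilbert-transform boundedness for the Cauchy-singular part of $H_0$, and the class $-2$ difference $H_k-H_0$, consistent with Corollary \ref{coro2}), and you make explicit the Taylor-expansion step that transfers the $y$-dependence of $\nu(y)$ to a smooth function of $x$, a point the paper leaves implicit.
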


\begin{proof}
We only show the proof of for the integral operators $D_k$ and $D_k'$, since the
results are standard and can be found in \cite{CK-83} for other integral
operators. It follows from Lemma \ref{asyform} that the kernel $D_k$
satisfies
\begin{align*}
\frac{\partial \Phi_k(x,y)}{\partial \nu(y)} &= \frac{\partial
\Phi_0(x,y)}{\partial \nu(y)} - \frac{|k|^2}{4}\frac{\partial
(|x-y|^2\Phi_0(x,y))}{\partial \nu(y)} + O\bigg(\frac{\partial
(|x-y|^4\Phi_0(x,y))}{\partial \nu(y)}\bigg) \\
&= K_0(x,y) - \frac{|k|^2}{4} K_1(x,y)+K_2(x,y),
\end{align*}
where $K_0$ is the kernel for the integral operator $D_0$ and is of class
$-\infty$, and $K_2$ is of class at most $-4$ by Theorem \ref{thm1}. A simple
calculation yields 
\begin{align*}
K_1(x,y) &= \frac{\partial (|x-y|^2\Phi_0(x,y))}{\partial \nu(y)}
\notag\\ &= \frac{\partial |x-y|^2}{\partial \nu(y)}\Phi_0(x,y) +
\frac{\partial \Phi_0(x,y)}{\partial \nu(y)}|x-y|^2 \\
&= -2(\nu(y)\cdot(x-y))\Phi_0(x,y)+ K_0(x,y)|x-y|^2.
\end{align*}
For a smooth curve $\Gamma$, it is shown in \cite{CK-83} that 
\[
\nu(y)\cdot(x-y) = O(|x-y|^2).
\]
Hence it follows from Theorem \ref{thm1} that $K_1$ is a kernel of class $-3$
and $D_k$ is a bounded operator from $H^{r}(\Gamma)$ to $H^{r+3}(\Gamma)$.
Similarly we can show that the kernel of $D'_k$ is also of class $-3$, which
completes the proof. 
\end{proof}

The following results are related to the properties of difference of boundary
operators. The proof is similar to that for Corollary \ref{coro1}, so we omit
it. 

\begin{corollary}\label{coro2}
The following mappings are bounded:
\begin{align*}
&D_k-D_0, D'_k-D'_0 : H^{r}(\Gamma) \rightarrow H^{r+3}(\Gamma),\\
&H_k-H_0, H'_k-H_0 : H^{r}(\Gamma) \rightarrow H^{r+2}(\Gamma),
\end{align*}
where $r$ is an arbitrary real number.
\end{corollary}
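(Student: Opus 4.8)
The plan is to follow the template of Corollary~\ref{coro1} verbatim: substitute the expansion of Lemma~\ref{asyform} into each kernel, subtract the corresponding kernel at $k=0$ so that the leading pure-$\Phi_0$ term cancels exactly, and then read off the class of the surviving terms from Theorem~\ref{thm1}. Lemma~\ref{asyform} gives $\Phi_k-\Phi_0=-\tfrac{k^2}{4}|x-y|^2\Phi_0+|x-y|^4p_1(|x-y|^2)\Phi_0+p_2(|x-y|^2)$, in which $p_2(|x-y|^2)$ is analytic (class $-\infty$) and $|x-y|^4p_1(|x-y|^2)\Phi_0$ is of class $-5$. A single differentiation — in either the $\nu$- or $\tau$-direction, and in either the $x$- or $y$-variable — raises the class of a kernel by at most one, so in every difference kernel below the $p_1$- and $p_2$-contributions are of class at most $-4$ and are negligible compared with the leading term. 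It therefore suffices to analyze $-\tfrac{k^2}{4}\,\partial(|x-y|^2\Phi_0)$ in each case.

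For $D_k-D_0$ the kernel is $\partial_{\nu(y)}(\Phi_k-\Phi_0)$, whose leading term $-\tfrac{k^2}{4}\partial_{\nu(y)}(|x-y|^2\Phi_0)$ is precisely the kernel $K_1$ computed in the proof of Corollary~\ref{coro1}. The product rule produces $-2(\nu(y)\cdot(x-y))\Phi_0$ together with $|x-y|^2$ times the smooth $D_0$-kernel, and the geometric identity $\nu(y)\cdot(x-y)=O(|x-y|^2)$ on a smooth curve pushes the first piece to class $-3$. Hence $D_k-D_0$ has a kernel of class $-3$ and maps $H^r(\Gamma)\to H^{r+3}(\Gamma)$; the adjoint $D'_k-D'_0$ is identical after differentiating in $\nu(x)$ and invoking $\nu(x)\cdot(x-y)=O(|x-y|^2)$.

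The tangential operators are where the argument genuinely departs from Corollary~\ref{coro1}, and this is the step I expect to be the crux. For $H_k-H_0$ the kernel is $\partial_{\tau(y)}(\Phi_k-\Phi_0)$, with leading term $-\tfrac{k^2}{4}\partial_{\tau(y)}(|x-y|^2\Phi_0)=\tfrac{k^2}{2}(\tau(y)\cdot(x-y))\Phi_0-\tfrac{k^2}{4}|x-y|^2\,\partial_{\tau(y)}\Phi_0$. The essential difference from the normal case is that $\tau(y)\cdot(x-y)$ admits no extra vanishing: because $\tau$ is tangent to $\Gamma$ one only has $\tau(y)\cdot(x-y)=O(|x-y|)$, so by Theorem~\ref{thm1} (with $|\alpha|=1$, $\beta=0$) this piece is of class $-2$ rather than $-3$. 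The remaining piece is $|x-y|^2$ times the kernel of $H_0$, which is of class $0$ by Corollary~\ref{coro1} and hence of class $-2$ as well. Thus $H_k-H_0$ is of class $-2$ and maps $H^r(\Gamma)\to H^{r+2}(\Gamma)$. The tangential adjoint — which is most naturally read as $H'_k-H'_0$, obtained by differentiating $\Phi_k-\Phi_0$ in $\tau(x)$ — is treated by the identical computation and yields the same class $-2$. The one point to be careful about is exactly this loss of a derivative: the absence of the cancellation $\nu\cdot(x-y)=O(|x-y|^2)$ in the tangential direction is precisely what cuts the smoothing gain from three orders down to two, and I would verify explicitly that none of the higher-order $p_1$- and $p_2$-terms interfere with this leading class-$-2$ behavior.
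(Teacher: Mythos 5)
Your proposal is correct and follows essentially the route the paper intends: the paper omits the proof of Corollary~\ref{coro2}, saying only that it is ``similar to that for Corollary~\ref{coro1},'' and your argument is exactly that kernel-expansion template, with the leading term $-\tfrac{k^2}{4}\partial\bigl(|x-y|^2\Phi_0\bigr)$ analyzed via Theorem~\ref{thm1}. You also correctly identify the one point where the tangential case genuinely differs from the normal case treated in Corollary~\ref{coro1} — namely that $\tau(y)\cdot(x-y)=O(|x-y|)$ admits no quadratic cancellation, unlike $\nu(y)\cdot(x-y)=O(|x-y|^2)$ — which is precisely why $H_k-H_0$ and $H'_k-H'_0$ gain only two orders of smoothing while $D_k-D_0$ and $D'_k-D'_0$ gain three.
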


The next lemma follows from the property of Cauchy integrals \cite{K-99}. 

\begin{lemma}\label{cauchyprop}
Let $\Gamma\subset \mathbb{R}^2$ be a smooth curve. Then 
\begin{align*}
D_0^2-H_0^2 = \frac{I}{4}, \quad H_0 D_0 = -D_0 H_0, \quad 
{D'_0}^2-{H'_0}^2 = \frac{I}{4}, \quad {H'_0}{D'_0} = -D'_0 H'_0,
\end{align*}
where $I$ is the identity operator. 
\end{lemma}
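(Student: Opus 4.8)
The four identities are algebraic consequences of a single fact from the theory of Cauchy singular integrals, and the plan is to reduce them to it. First I would identify $\mathbb{R}^2$ with $\mathbb{C}$, writing a boundary point $y=(y_1,y_2)$ as $\zeta=y_1+\mathrm{i}y_2$ and the evaluation point $x$ as $z$. Under the relations $\tau_1=-\nu_2,\ \tau_2=\nu_1$ the unit tangent becomes $\tau=\mathrm{i}\nu$ in complex notation, and with $\Gamma$ positively oriented one has $\mathrm{d}\zeta=\tau\,\mathrm{d}s=\mathrm{i}\nu\,\mathrm{d}s$. The central object is the principal-value Cauchy operator
\[
\mathcal{C}\phi(z)=\frac{1}{\pi\mathrm{i}}\,\mathrm{p.v.}\!\int_{\Gamma}\frac{\phi(\zeta)}{\zeta-z}\,\mathrm{d}\zeta,
\]
for which the property recalled in \cite{K-99} (equivalently the Plemelj--Sokhotski formulas on a smooth closed curve) yields the involution identity $\mathcal{C}^2=I$.

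The key bookkeeping step is to express $\mathcal{C}$ through the real operators $D_0$ and $H_0$. Writing $\zeta-z=(y-x)$ and using $\mathrm{d}\zeta=\mathrm{i}\nu\,\mathrm{d}s$, a short computation of $\mathrm{i}\nu\,\overline{(\zeta-z)}$ separates the Cauchy kernel into $\tau\cdot(y-x)$ and $\mathrm{i}\,\nu\cdot(y-x)$ divided by $|x-y|^2$. Recognizing these as $-2\pi$ times the kernels of $H_0$ and $D_0$, respectively (recall $\Phi_0(x,y)=-\frac{1}{2\pi}\ln|x-y|$), I obtain
\[
\mathcal{C}=-2D_0+2\mathrm{i}H_0 .
\]
Since $\Phi_0$ is real, both $D_0$ and $H_0$ have real kernels, so squaring and then separating real and imaginary parts is legitimate:
\[
\mathcal{C}^2=4\bigl(D_0^2-H_0^2\bigr)-4\mathrm{i}\,\bigl(D_0H_0+H_0D_0\bigr).
\]
Matching this with $I$ in its real and imaginary parts gives at once $D_0^2-H_0^2=I/4$ and $D_0H_0+H_0D_0=0$, that is, $H_0D_0=-D_0H_0$.

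For the two primed identities I would simply pass to $L^2(\Gamma)$ adjoints. Using $(AB)^\ast=B^\ast A^\ast$ together with $D_0^\ast=D_0'$ and $H_0^\ast=H_0'$, the adjoint of $D_0^2-H_0^2=I/4$ is ${D_0'}^2-{H_0'}^2=I/4$, and the adjoint of $D_0H_0=-H_0D_0$ is $H_0'D_0'=-D_0'H_0'$, which completes the proof.

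I expect the only genuine difficulty to be the identification step: keeping the orientation and principal-value conventions consistent so that the complex Cauchy kernel matches the normal- and tangential-derivative kernels with the correct signs, and invoking $\mathcal{C}^2=I$ in a form valid for a smooth closed curve $\Gamma$. Once $\mathcal{C}=-2D_0+2\mathrm{i}H_0$ is secured, the remaining arguments are pure algebra, and the overall sign of $\mathcal{C}$ is immaterial since only $\mathcal{C}^2$ enters.
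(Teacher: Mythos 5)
Your proof is correct and takes essentially the same route as the paper: the paper disposes of this lemma with the single remark that it ``follows from the property of Cauchy integrals \cite{K-99}'', and your argument is precisely that property worked out in detail --- the involution $\mathcal{C}^2=I$ of the Cauchy singular integral operator, the identification $\mathcal{C}=-2D_0+2\mathrm{i}H_0$ (with correct signs and orientation), separation of real and imaginary parts, and passage to adjoints for the primed identities.
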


In this paper, we mainly focus on functions in $H^{1/2}(\Gamma)$ and
$H^{-1/2}(\Gamma)$, which are the trace space of $H^{1}(D)$ and $L^2(D)$,
respectively. We also denote the vector function space with each component in
$H^{1/2}(\Gamma)$ by $H^{1/2}(\Gamma)^2$. Similar notation applies to
$H^{s}(\Gamma)$ for any real $s$. It is well known that the two dimensional
single layer boundary operator $S_0$, which is bounded from 
$H^{s}(\Gamma)$ to $H^{s+1}(\Gamma)$, is not invertible in general. However, we
have the following result which can be found in \cite{K-99}:

\begin{lemma}\label{s0posi}
There exists a constant $c>0$, which only depends on the curve $\Gamma$,
such that the operator $\overline{S}_0$, defined by
\[
(\overline{S}_0\phi)(x)=\int_{\Gamma}(\Phi_0(x,
y)+c)\phi(y)ds(y)=S_0\phi(x)+\int_{\Gamma} c \phi(y)ds(y),
\]
is invertible from $H^{s}(\Gamma)$ to $H^{s+1}(\Gamma)$ for any real $s$.
\end{lemma}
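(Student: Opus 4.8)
The plan is to show that $\overline{S}_0$ is Fredholm of index zero from $H^s(\Gamma)$ to $H^{s+1}(\Gamma)$ and then to choose $c$ so that $\overline{S}_0$ is injective, since for a Fredholm operator of index zero injectivity is equivalent to invertibility. For the Fredholm property I would use that the logarithmic single-layer operator $S_0$ is an elliptic, self-adjoint classical pseudodifferential operator of order $-1$ on the compact curve $\Gamma$ (its principal symbol is $1/(2|\xi|)>0$), hence Fredholm as a map $H^s\to H^{s+1}$ for every real $s$; self-adjointness, recorded above as $S_0=S_0'$, then forces the index to be zero. The extra term $\phi\mapsto c\int_\Gamma\phi\,ds$ has rank one, so it is compact and $\overline{S}_0=S_0+c\langle 1,\cdot\rangle$ is again Fredholm of index zero for every $s$. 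Because $S_0$ is elliptic, any solution of $\overline{S}_0\phi=0$ satisfies $S_0\phi=-c\int_\Gamma\phi\,ds=\text{const}$, which is smooth, so by elliptic regularity $\phi\in C^\infty(\Gamma)$; thus the kernel is the same finite-dimensional space for all $s$, and it suffices to prove injectivity for smooth densities and for a single, $s$-independent choice of $c$.

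For injectivity I would argue by potential theory. Let $\phi$ satisfy $\overline{S}_0\phi=0$, set $a=\int_\Gamma\phi\,ds$, and let $u=\mathcal{S}_0\phi$ be the single-layer potential, which is harmonic in $D$ and in $\Omega_e:=\mathbb{R}^2\setminus\overline{D}$, continuous across $\Gamma$, and satisfies $u|_\Gamma=S_0\phi=-ca$ (the \emph{same} constant on every component of $\Gamma$). If $a=0$, then $u|_\Gamma=0$; the interior Dirichlet problem gives $u\equiv 0$ in $D$, and since $u=O(|x|^{-1})$ at infinity (the logarithmic term carries the factor $a=0$) the exterior maximum principle gives $u\equiv 0$ in $\Omega_e$; the jump relation \eqref{jr} for $\partial_\nu\mathcal{S}_0$ then yields $\phi=\partial_\nu u|_- -\partial_\nu u|_+=0$. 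Hence any nontrivial kernel element must have $a\neq 0$.

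For the case $a\neq 0$ I would exploit the logarithmic capacity of $\Gamma$. Since $u\equiv -ca$ on $\Gamma$, uniqueness for the interior problem forces $u\equiv -ca$ in $D$, so $\partial_\nu u|_-=0$ and $\phi=-\partial_\nu u|_+$. Introduce the exterior Green's function with pole at infinity, i.e.\ the harmonic function $N$ on $\Omega_e$ with $N|_\Gamma=0$ and $N(x)=\ln|x|+\gamma_0+o(1)$ as $|x|\to\infty$, where $\gamma_0=\gamma_0(\Gamma)$ depends only on $\Gamma$. Then $u+\tfrac{a}{2\pi}N$ is bounded and harmonic in $\Omega_e$ with boundary value $-ca$, hence identically $-ca$, so $u=-ca-\tfrac{a}{2\pi}N$ in $\Omega_e$ and its far field equals $-ca-\tfrac{a}{2\pi}\gamma_0+o(1)$. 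On the other hand, a single-layer potential has the expansion $\mathcal{S}_0\phi(x)=-\tfrac{a}{2\pi}\ln|x|+O(|x|^{-1})$ with \emph{vanishing} constant term. Matching the two constant terms gives $a\bigl(c+\tfrac{\gamma_0}{2\pi}\bigr)=0$, so $a\neq 0$ is possible only for the single value $c=c^\ast:=-\gamma_0/(2\pi)$. Choosing any $c>0$ with $c\neq c^\ast$—a choice depending only on $\Gamma$—rules out $a\neq 0$, so $\overline{S}_0$ is injective, and therefore invertible from $H^s(\Gamma)$ to $H^{s+1}(\Gamma)$ for every real $s$.

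The main obstacle is the case $a\neq 0$: it requires the exterior potential-theoretic analysis and the identification of the exceptional constant $c^\ast$ through the capacity datum $\gamma_0$. As a consistency check, the unmodified operator $S_0$ corresponds to $c=0$, and $c^\ast=0$ exactly when $\gamma_0=0$, i.e.\ when the logarithmic capacity of $\Gamma$ equals one—precisely the classical criterion for $S_0$ itself to fail to be injective—which confirms that the sole effect of the regularizing constant $c$ is to shift the single exceptional value away from the admissible range.
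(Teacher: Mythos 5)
Your proof is correct, but note that the paper itself does not prove this lemma at all: it is quoted from the reference [K-99] (Kress, \emph{Linear Integral Equations}), so there is no internal argument to compare against. Your route is a legitimate, self-contained alternative to the textbook proofs. The standard treatments either (i) parametrize $\Gamma$, split the logarithmic kernel into the circle kernel plus a smooth remainder, observe that the principal part is diagonal in the Fourier basis with multipliers $1/(2|n|)$ and a vanishing zero mode --- which the added constant $c$ repairs --- and then finish with Riesz--Fredholm theory plus a uniqueness argument; or (ii) prove coercivity of the modified bilinear form on $H^{-1/2}(\Gamma)$ by splitting a density into its mean-zero part (where $\langle S_0\phi,\phi\rangle$ is positive) and the equilibrium-density direction (where the term $c\,|\int_\Gamma\phi\,ds|^2$ dominates), then invoke Lax--Milgram and bootstrap by ellipticity. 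Your argument instead runs through Fredholm theory (ellipticity of $S_0$ as an order $-1$ pseudodifferential operator, compactness of the rank-one modification, index zero) and a potential-theoretic injectivity analysis that identifies the \emph{unique} exceptional value $c^\ast=-\gamma_0/(2\pi)$, essentially the Robin constant of $\Gamma$. This buys something the coercivity route does not: it shows that \emph{every} $c>0$ except at most the single value $c^\ast$ works (coercivity needs $c>c^\ast$), and it recovers the classical fact that the unmodified $S_0$ fails to be injective exactly when the logarithmic capacity of $\Gamma$ equals one. Two points deserve one more line each if this were written out in full: the index-zero claim should be justified by identifying the cokernel of $S_0:H^s\to H^{s+1}$ with the kernel of its transpose on the dual scale, which by elliptic regularity is the same finite-dimensional space of smooth functions for every $s$; and the uniqueness statements for the bounded exterior Dirichlet problem in two dimensions (used both for $a=0$ and for $a\neq 0$) rest on the removability of the singularity at infinity after a Kelvin inversion, which is worth stating explicitly since no decay condition beyond boundedness is imposed there.
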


To this end, we denote the operator
\begin{eqnarray*}
\overline{\mathbf{S}}_0\overline{\mathbf{S}}_0=\begin{bmatrix}
 \overline{S}_0\overline{S}_0 & 0 \\
0 & \overline{S}_0\overline{S}_0
\end{bmatrix}
\end{eqnarray*} 
for a vector function ${\bf w}=(w_1, w_2)\in H^{s}(\Gamma)^2$ by
$\overline{\mathbf{S}}_0\overline{\mathbf{S}}_0$.  By Lemma \ref{s0posi}, the
operator $\overline{\mathbf{S}}_0\overline{\mathbf{S}}_0$ is invertible from
$H^{s}(\Gamma)^2$ to $H^{s+2}(\Gamma)^2$.

\section{Boundary integral equations}

In this section, we derive boundary integral equations for the scattering
problem \eqref{bvp} and show the well-posedness of the proposed boundary
integral equations. For clarity, we restrict our discussion to the scattering of
a single particle, which is still denoted by $D$ with boundary $\Gamma$.

Define two single layer potentials corresponding to the compressional and shear
wavenumbers:
\[
\phi(x) = \mathcal{S}_{k_p}\alpha(x), \quad \psi(x) = \mathcal{S}_{k_s}\beta(x),
\]
where $(\alpha(x), \beta(x))\in H^{-1/2}(\Gamma)^2$ are densities. Using the boundary condition
\eqref{bcv} and the jump relations \eqref{jr}, we obtain the integral equation 
\begin{eqnarray}\label{mainint}
A\begin{bmatrix}
\alpha(x) \\
\beta(x)
\end{bmatrix}=
\begin{bmatrix}
-\frac{I}{2}+D'_{k_p} & H'_{k_s} \\
H'_{k_p} & \frac{I}{2}-D'_{k_s}
\end{bmatrix}
\begin{bmatrix}
\alpha(x) \\
\beta(x)
\end{bmatrix}
=
\begin{bmatrix}
f(x) \\
g(x)
\end{bmatrix}.
\end{eqnarray}
where $I$ is the identity operator. We first state the following existence result for equation \eqref{mainint}.
\begin{theorem}\label{wp}
	Assume that neither $k_s$ or $k_p$ is the eigenvalue of the interior Dirichlet
	problem for the Helmholtz equation in $D$. Then the integral equation
	\eqref{mainint} has a unique solution in $H^{-1/2}(\Gamma)^2$.
\end{theorem}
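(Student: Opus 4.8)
My plan is to isolate the principal part of the matrix operator $A$, recognize via Lemma \ref{cauchyprop} that it is degenerate (so that the Fredholm theory does not apply to $A$ directly, as announced in the introduction), and then repair this degeneracy with an explicit regularizer built from the adjugate of $A$ together with the invertible operator $\overline{\mathbf S}_0\overline{\mathbf S}_0$ of Lemma \ref{s0posi}. Write $A=A_0+K$, where
\[
A_0=\begin{bmatrix} -\tfrac{I}{2} & H'_0 \\ H'_0 & \tfrac{I}{2}\end{bmatrix},
\]
and $K$ collects the remaining entries. By Corollary \ref{coro1} the operators $D'_{k_p},D'_{k_s}$ gain three derivatives, and by Corollary \ref{coro2} the differences $H'_{k_p}-H'_0$ and $H'_{k_s}-H'_0$ gain two; hence $K$ is compact on $H^{-1/2}(\Gamma)^2$. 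A short computation using ${D'_0}^2-{H'_0}^2=I/4$ from Lemma \ref{cauchyprop} gives $A_0^2=\mathrm{diag}({D'_0}^2,{D'_0}^2)$, which is smoothing; equivalently, the principal symbol of $A_0$ has rank one at every point. This is the degeneracy of the principal part, and it shows that $A$ is \emph{not} Fredholm as an operator on $H^{-1/2}(\Gamma)^2$.

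To regularize, I would left-multiply \eqref{mainint} by the adjugate
\[
\mathrm{adj}(A)=\begin{bmatrix} \tfrac{I}{2}-D'_{k_s} & -H'_{k_s} \\ -H'_{k_p} & -\tfrac{I}{2}+D'_{k_p}\end{bmatrix}.
\]
Expanding $\mathrm{adj}(A)\,A$ and again using Lemma \ref{cauchyprop} to cancel the order-zero part (the identity blocks cancel against ${H'_0}^2={D'_0}^2-I/4$), the diagonal blocks collapse to operators that gain exactly two derivatives, whose leading symbols are nonvanishing because $k_p,k_s>0$ (a symbol computation shows they are proportional to $k_p^2+k_s^2$), while the off-diagonal blocks become commutators of the type $[H'_{k},D'_{k}]$ and are smoothing by Corollaries \ref{coro1}--\ref{coro2}. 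Thus $\mathrm{adj}(A)\,A$ is elliptic and gains two derivatives, matching $\overline{\mathbf S}_0\overline{\mathbf S}_0$. Composing on the left with $(\overline{\mathbf S}_0\overline{\mathbf S}_0)^{-1}$, invertible by Lemma \ref{s0posi}, yields
\[
(\overline{\mathbf S}_0\overline{\mathbf S}_0)^{-1}\,\mathrm{adj}(A)\,A=cI+\mathcal K,
\]
with $c\neq0$ and $\mathcal K$ compact on $H^{-1/2}(\Gamma)^2$, a Fredholm operator of index zero. The hard part of the whole argument is precisely this step: the vanishing of the order-zero symbol forces one to descend to the next term, and one must verify that this term is elliptic and of the order matched by $\overline{\mathbf S}_0\overline{\mathbf S}_0$.

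It then remains to prove injectivity. Suppose $A(\alpha,\beta)^\top=0$ and set $\phi=\mathcal S_{k_p}\alpha$, $\psi=\mathcal S_{k_s}\beta$. The homogeneous data mean that $(\phi,\psi)$ solves the exterior coupled Helmholtz system \eqref{bvp} with $f=g=0$, so Theorem \ref{uniqueresult} forces $\phi=\psi=0$ in $\mathbb R^2\setminus\overline D$. Since single layer potentials are continuous across $\Gamma$, the interior traces of $\phi$ and $\psi$ vanish as well, so $\phi$ and $\psi$ solve the interior Dirichlet problems for the Helmholtz equation in $D$ at the wavenumbers $k_p$ and $k_s$; as these are not interior Dirichlet eigenvalues, $\phi=\psi=0$ in $D$. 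The jump relation \eqref{jr} for the normal derivative of the single layer then gives $\alpha=\partial_\nu\phi^--\partial_\nu\phi^+=0$ and likewise $\beta=0$, so $A$ is injective.

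Finally I would assemble the pieces. An identical argument shows $\mathrm{adj}(A)$ is injective under the same eigenvalue hypothesis. Together with injectivity of $A$ this makes $cI+\mathcal K$ injective, so by the Fredholm alternative it is bijective, and the regularized equation $(\overline{\mathbf S}_0\overline{\mathbf S}_0)^{-1}\mathrm{adj}(A)\,A\,(\alpha,\beta)^\top=(\overline{\mathbf S}_0\overline{\mathbf S}_0)^{-1}\mathrm{adj}(A)\,(f,g)^\top$ has a unique solution $(\alpha,\beta)^\top\in H^{-1/2}(\Gamma)^2$. Setting $r=A(\alpha,\beta)^\top-(f,g)^\top$, this identity gives $(\overline{\mathbf S}_0\overline{\mathbf S}_0)^{-1}\mathrm{adj}(A)\,r=0$, whence $\mathrm{adj}(A)\,r=0$ and then $r=0$ by injectivity of $\mathrm{adj}(A)$; so the solution satisfies the original equation \eqref{mainint}. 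Uniqueness for \eqref{mainint} is exactly the injectivity of $A$ already established, which completes the proof.
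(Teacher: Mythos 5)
Your plan follows the paper's broad strategy: build a regularizer for the degenerate operator $A$, apply the Fredholm alternative, and prove injectivity of $A$ by combining the single-layer representation with Theorem \ref{uniqueresult} and the eigenvalue hypothesis. Two of your three pieces are sound. The product identity is essentially correct: in fact $-\mathrm{adj}(A)$ differs from the paper's regularizer $B$ only in its (compact, smoothing) diagonal $D'$-entries, so $\mathrm{adj}(A)\,A = \frac{k_p^2+k_s^2}{2}\,\overline{\mathbf S}_0\overline{\mathbf S}_0 + \text{compact}$ holds (this is the paper's Theorem \ref{coercivity_thm} up to sign), and your description of the off-diagonal blocks as commutators is right. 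Your proof that ${\rm Ker}(A)=\{0\}$ is exactly the paper's first half of Theorem \ref{nullthm}, with the jump relation applied correctly. The genuine gap is the sentence ``an identical argument shows $\mathrm{adj}(A)$ is injective,'' on which both of your concluding steps (injectivity of $\mathrm{adj}(A)A$, and the implication $\mathrm{adj}(A)\,r=0 \Rightarrow r=0$) depend.

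That argument is not identical, and it does not apply. The reason the kernel argument works for $A$ is that each \emph{column} of $A$ carries a single wavenumber: $\alpha$ is always acted on by operators at $k_p$ and $\beta$ by operators at $k_s$, so $A(\alpha,\beta)^\top=0$ is precisely homogeneous exterior data for $\phi=\mathcal S_{k_p}\alpha$, $\psi=\mathcal S_{k_s}\beta$, and Theorem \ref{uniqueresult} applies. The adjugate swaps the diagonal entries but not the off-diagonal ones, so in $\mathrm{adj}(A)$ each column mixes both wavenumbers: writing out $\mathrm{adj}(A)(\alpha,\beta)^\top=0$ gives $\partial_\nu \mathcal S_{k_s}\alpha|_+ + \partial_\tau \mathcal S_{k_s}\beta = 0$ (row 1) and $\partial_\tau \mathcal S_{k_p}\alpha - \partial_\nu \mathcal S_{k_p}\beta|_+ = 0$ (row 2). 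No assignment of a pair $(\phi,\psi)$ solving the Helmholtz equations at $(k_p,k_s)$ turns these two conditions into the boundary conditions of \eqref{bvp}, so the uniqueness theorem gives nothing, and triviality of ${\rm Ker}(\mathrm{adj}(A))$ is left entirely unproved. This is exactly the trap the paper avoids: it uses $B$ only for the two-sided regularizer identities, where injectivity of $B$ is never needed, and it obtains surjectivity through the $L^2$-adjoint $A'$ via the Fredholm range characterization ${\rm Ran}(A)=\{{\bf f}: \langle {\bf f},{\bf g}\rangle=0,\ {\bf g}\in{\rm Ker}(A')\}$; the kernel of $A'$ is then killed by a separate, differently structured potential argument with $\phi=\mathcal D_{k_p}\alpha-\mathcal S_{k_p}\partial_\tau\beta$ and $\psi=\mathcal D_{k_s}\beta+\mathcal S_{k_s}\partial_\tau\alpha$. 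Two smaller points: your inference that the rank-one principal symbol of $A_0$ ``shows that $A$ is not Fredholm'' is false --- it only shows $A$ is not elliptic, and the theorem itself (indeed your own regularizer identity) implies $A$ is Fredholm of index zero; and the assertion that the diagonal symbols are proportional to $k_p^2+k_s^2$ is true but is the hardest computation in the paper, so in a complete proof it must be carried out, not asserted.
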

\begin{remark}
	It is easy to see that $A=A_0+K$, where
	\begin{eqnarray}
	A_0=
	\begin{bmatrix}
	-\frac{I}{2} & H'_{0} \\
	H'_{0} & \frac{I}{2}
	\end{bmatrix}
	\end{eqnarray}
	is bounded in $H^{-1/2}(\Gamma)^2$ and $K$ is a compact operator in $H^{-1/2}(\Gamma)^2$. If $A_0$ is invertible, Fredholm alternative can be directly applied to show the invertibility of $A$. However,  $A_0$ is degenerated in the sense that
	\begin{eqnarray}
	A^2_0=\begin{bmatrix}
	{D'}^2_0 & 0 \\
	0 & {D'}^2_0
	\end{bmatrix},
	\end{eqnarray} 
	where $D'_0$ is a smooth operator by Corollary  \ref{coro1}.
\end{remark}
The existence result also holds for the integral
representation by using double layer potentials
\[
\phi(x) = \mathcal{D}_{k_p}\alpha(x), \quad \psi(x) = \mathcal{D}_{k_s}\beta(x).
\]
Using the jump relations \eqref{jr}, we obtain the integral equation
\begin{eqnarray}\label{mainint2}
M\begin{bmatrix}
\alpha(x) \\
\beta(x)
\end{bmatrix}
=
\begin{bmatrix}
f(x) \\
g(x)
\end{bmatrix},
\end{eqnarray}
where the coefficient matrix
\[
M=\begin{bmatrix}
T_{k_p} & \frac{1}{2}\partial_{\tau}+\partial_{\tau} D_{k_s} \\
\frac{1}{2}\partial_{\tau}+\partial_{\tau} D_{k_p} & -T_{k_s}
\end{bmatrix}.
\]
and $T_k= \partial_{\nu} D_{k}$.
It holds the following  existence result.
\begin{theorem}\label{mainthm2}
	If neither $k_s$ or $k_p$ is the eigenvalue of the interior
	Neumann problem for the Helmholtz equation in $D$, the integral equation
	\eqref{mainint2} has a unique solution in $H^{1/2}(\Gamma)^2$.
\end{theorem}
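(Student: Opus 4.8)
The plan is to follow the three-step scheme that underlies Theorem~\ref{wp}: split $M=M_0+K$ into a principal part $M_0$ and a compact remainder $K$, repair the degeneracy of $M_0$ with a regularizer so that the Fredholm alternative applies, and then prove injectivity so that index-zero Fredholmness upgrades to bijectivity. For the splitting, Corollary~\ref{coro1} shows that $D_{k_p}$ and $D_{k_s}$ are smoothing, so $\partial_\tau D_{k_p}$ and $\partial_\tau D_{k_s}$ are compact from $H^{1/2}(\Gamma)$ into $H^{-1/2}(\Gamma)$, while Corollary~\ref{coro2} shows that $T_{k_p}-T_0$ and $T_{k_s}-T_0$ are compact in the same sense, where $T_0$ is the hypersingular operator of the Laplace equation. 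Hence $M=M_0+K$ with
\[
M_0=\begin{bmatrix} T_0 & \tfrac12\partial_\tau\\ \tfrac12\partial_\tau & -T_0\end{bmatrix},
\]
and $K$ compact from $H^{1/2}(\Gamma)^2$ to $H^{-1/2}(\Gamma)^2$, exactly paralleling the splitting $A=A_0+K$ recorded in the remark after Theorem~\ref{wp}.

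The second step is the main obstacle. Just as $A_0$ is degenerate, so is $M_0$: the exact identities $H_0=-S_0\partial_\tau$ and Maue's formula $T_0=-\partial_\tau S_0\partial_\tau$ give the factorization $M_0=\mathrm{diag}(\partial_\tau,\partial_\tau)\,B_0$ with
\[
B_0=\begin{bmatrix} H_0 & \tfrac12 I\\ \tfrac12 I & -H_0\end{bmatrix},\qquad B_0^2=\begin{bmatrix} D_0^2 & 0\\ 0 & D_0^2\end{bmatrix}
\]
by Lemma~\ref{cauchyprop}, where $D_0^2$ is smoothing by Corollary~\ref{coro1}. Thus $M_0$ is not invertible modulo compact operators, its principal part is rank deficient, and no parametrix can be manufactured from $M_0$ alone; the regularization must exploit the lower-order terms and the fact that $k_p\neq k_s$. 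The plan is to construct a bounded $R:H^{-1/2}(\Gamma)^2\to H^{1/2}(\Gamma)^2$ with $RM=I+C$, $C$ compact on $H^{1/2}(\Gamma)^2$, built from the modified single-layer operator through the invertibility of $\overline{\mathbf{S}}_0\overline{\mathbf{S}}_0$ from $H^{s}(\Gamma)^2$ to $H^{s+2}(\Gamma)^2$ (Lemma~\ref{s0posi}), which is exactly what absorbs the order-one mapping loss of $M$ and returns to $H^{1/2}(\Gamma)^2$, together with the adjugate of $M_0$. Verifying $RM=I+C$ is where the full operator form of the Calderón relations of Lemma~\ref{cauchyprop} must be used, rather than only the principal symbol, since at leading order $M_0$ is degenerate; I expect this verification to be the crux of the proof. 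Once it is in place, $M$ is Fredholm of index zero.

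The third step is injectivity, which is where the hypothesis on the interior Neumann eigenvalues enters. Suppose $M[\alpha,\beta]^{\mathrm T}=0$ and set $\phi=\mathcal{D}_{k_p}\alpha$, $\psi=\mathcal{D}_{k_s}\beta$. The exterior traces then give $\partial_\nu\phi+\partial_\tau\psi=0$ and $\partial_\tau\phi-\partial_\nu\psi=0$ on $\Gamma$, so the pair $(\phi,\psi)$ solves \eqref{bvp} in $\mathbb{R}^2\setminus\overline D$ with $f=g=0$; by the uniqueness Theorem~\ref{uniqueresult}, $\phi=\psi=0$ in the exterior. Since the Neumann trace of a double-layer potential is continuous across $\Gamma$, the interior field $\phi$ has vanishing Neumann data, and because $k_p$ is not an interior Neumann eigenvalue, $\phi=0$ in $D$ as well; the Dirichlet jump relation $\phi^{+}-\phi^{-}=\alpha$ then forces $\alpha=0$, and the same argument with $k_s$ gives $\beta=0$. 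Therefore $M$ is injective. Combining injectivity with the index-zero Fredholm property from the second step yields bijectivity, so that \eqref{mainint2} has a unique solution in $H^{1/2}(\Gamma)^2$, which is the assertion.
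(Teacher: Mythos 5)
Your overall strategy---split off a degenerate principal part, repair the degeneracy with a regularizer so that the Fredholm alternative applies, then prove injectivity from the interior Neumann hypothesis---is exactly the paper's, and your step 3 is correct: it is precisely the (omitted) proof of the first half of Theorem~\ref{nullthm2}, using continuity of the Neumann trace of the double layer potential across $\Gamma$ and the Dirichlet jump $\phi^+-\phi^-=\alpha$. But the core of the proof is missing. You write ``the plan is to construct a bounded $R$ with $RM=I+C$'' and concede that the verification ``is the crux''; the paper's proof of Theorem~\ref{mainthm2} consists almost entirely of carrying out that step. Concretely, it takes $N=\mathrm{diag}(S_{k_p},S_{k_s})\,\widetilde M\,\mathrm{diag}(S_{k_p},S_{k_s})$, where $\widetilde M$ is $M$ with the signs of the off-diagonal $\partial_\tau D_k$ terms flipped, and proves (Theorem~\ref{doublethm}), via the Calder\'on identities $D_k^2-S_kT_k=\frac{I}{4}$ and ${D'_k}^2-T_kS_k=\frac{I}{4}$ of Lemma~\ref{caldlemma} combined with the computations of Theorem~\ref{coercivity_thm}, that $NM$ and $MN$ equal $-\frac{k_s^2+k_p^2}{8}\overline{\mathbf{S}}_0\overline{\mathbf{S}}_0$ plus an operator compact from $H^{1/2}(\Gamma)^2$ to $H^{5/2}(\Gamma)^2$. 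Moreover, your proposed ingredients cannot work as stated: the adjugate of $M_0$ leads to $\det(M_0)=-T_0^2-\frac14\partial_\tau^2$, whose principal symbol is $-\frac{|\xi|^2}{4}+\frac{\xi^2}{4}=0$---the very degeneracy you identified---so the regularizer must be built from the full wavenumber-dependent operators, the invertible part proportional to $k_s^2+k_p^2$ emerging from the lower-order terms, exactly as the paper's $N$ does. (A minor point: you cite Corollary~\ref{coro2} for compactness of $T_k-T_0$, but that corollary does not list $T_k-T_0$; the fact is true but requires the same kernel-expansion argument.)

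Second, even granting a regularizer, your inference ``$RM=I+C$, hence $M$ is Fredholm of index zero'' is unjustified. A left regularizer only bounds $\dim\mathrm{Ker}(M)$; a two-sided regularizer gives Fredholmness but no index information (it yields $\mathrm{ind}(M)=-\mathrm{ind}(R)$, and nothing forces $\mathrm{ind}(R)=0$). The paper never computes an index. Instead it establishes the regularizer identities also for the adjoint operators $M'$ and $N'$ with respect to the $L^2(\Gamma)^2$ bilinear pairing, so that the Fredholm alternative characterizes $\mathrm{Ran}(M)$ as the annihilator of $\mathrm{Ker}(M')$, and then proves the second half of Theorem~\ref{nullthm2}, namely $\mathrm{Ker}(M')=\{0\}$, using a representation of the form $\phi=\mathcal D_{k_p}\alpha-\mathcal S_{k_p}\partial_\tau\beta$, $\psi=\mathcal D_{k_s}\beta+\mathcal S_{k_s}\partial_\tau\alpha$ as in Theorem~\ref{nullthm}. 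Your step 3 handles only $\mathrm{Ker}(M)$; without $\mathrm{Ker}(M')=\{0\}$ (or some other control of the cokernel), injectivity of $M$ does not yield existence of a solution to \eqref{mainint2}.
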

To remove the assumption of Theorem \ref{wp} or \ref{mainthm2},
we propose a combined double and single layer representation to obtain a
uniquely solvable integral system for any $k_s$ and $k_p$. Consider the
combined layer potentials
\[
\phi(x) = (\mathcal{D}_{k_p}-{\rm i}\mathcal{S}_{k_p})\alpha(x), \quad \psi(x) =
(\mathcal{D}_{k_s}-{\rm i}\mathcal{S}_{k_s})\beta(x),
\] 
which results in a combined integral equation
\begin{align}\label{combsys}
(M-{\rm i}A)\begin{bmatrix}
\alpha(x) \\
\beta(x)
\end{bmatrix}
&= \left(\begin{bmatrix}
T_{k_p} & \frac{1}{2}\partial_{\tau}+\partial_{\tau} D_{k_s} \\
\frac{1}{2}\partial_{\tau}+\partial_{\tau} D_{k_p} & -T_{k_s}
\end{bmatrix}
-{\rm i}\begin{bmatrix}
-\frac{1}{2}+D'_{k_p} & H'_{k_s} \\
H'_{k_p} & \frac{1}{2}-D'_{k_s}
\end{bmatrix}\right)
\begin{bmatrix}
\alpha(x) \\
\beta(x)
\end{bmatrix} \notag\\
&=
\begin{bmatrix}
f(x) \\
g(x)
\end{bmatrix}.
\end{align}
We have the following existence result.
\begin{theorem}\label{mainthm3}
For any $k_p>0$ and $k_s>0$, the integral equation \eqref{combsys} admits
a unique solution in $H^{1/2}(\Gamma)^2$.
\end{theorem}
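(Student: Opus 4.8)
The plan is to prove two things separately: that $M-\mathrm iA$ is \emph{injective} for every $k_p,k_s>0$, and that it is \emph{Fredholm of index zero}; the Fredholm alternative then upgrades injectivity to bijectivity. I view $M-\mathrm iA$ as a bounded map $H^{1/2}(\Gamma)^2\to H^{-1/2}(\Gamma)^2$ (the hypersingular blocks $T_k$ and the tangential derivatives $\tfrac12\partial_\tau$ raise the order by one), and I attach to a density pair $(\alpha,\beta)$ the potentials $\phi=(\mathcal D_{k_p}-\mathrm i\mathcal S_{k_p})\alpha$ and $\psi=(\mathcal D_{k_s}-\mathrm i\mathcal S_{k_s})\beta$, so that \eqref{combsys} is exactly the pair of exterior boundary conditions $\partial_\nu\phi+\partial_\tau\psi=f$, $\partial_\tau\phi-\partial_\nu\psi=g$ for these potentials.

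For injectivity, let the right-hand side of \eqref{combsys} vanish. The exterior restrictions of $\phi,\psi$ then solve the coupled system \eqref{bvp} with $f=g=0$ (the Helmholtz equations, the homogeneous boundary conditions, and the Sommerfeld conditions all hold), so Theorem \ref{uniqueresult} gives $\phi=\psi=0$ in $\mathbb R^2\setminus\overline D$; in particular the exterior Cauchy data of $\phi$ and $\psi$ vanish on $\Gamma$. Inserting this into the jump relations of the combined layer potential (the single layer contributes a jump in the normal derivative, the double layer a jump in the trace) yields the interior Cauchy data $\phi_-=-\alpha$ and $\partial_\nu\phi_-=-\mathrm i\alpha$, hence the impedance identity $\partial_\nu\phi_-=\mathrm i\phi_-$ on $\Gamma$. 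Applying Green's first identity to $\phi_-$ over $D$ and taking the imaginary part gives $\int_\Gamma|\phi_-|^2\,ds=0$, so $\alpha=-\phi_-|_\Gamma=0$; the identical computation with $k_s$ gives $\beta=0$. The decisive point is that the sign of the impedance term is fixed by the factor $-\mathrm i$ in the combined representation, so this argument needs \emph{no} spectral assumption on $k_p,k_s$ --- which is exactly what the single- and double-layer formulations in Theorems \ref{wp} and \ref{mainthm2} could not achieve.

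For the Fredholm property I must overcome the degeneracy flagged in the Remark: the order-one principal parts of $M$ and of $A$ each carry a \emph{singular} principal symbol (for $A_0$ this is visible in $A_0^2=\operatorname{diag}(D_0'^2,D_0'^2)$ with $D_0'$ smoothing). The crux of the proof --- and the reason the combined formulation works at all frequencies --- is that these two degeneracies \emph{cancel}: writing $H_0'=\partial_\tau S_0$ and using the order-one symbol of the Laplace hypersingular operator $T_0$, the full $2\times2$ symbol of $M-\mathrm iA$ has determinant equal to $-\mathrm i\,|\xi|$ modulo lower order, which is nonzero, so the coupled operator is elliptic although neither block is. Concretely I would split $M-\mathrm iA=\mathcal L_0+\mathcal C$, where $\mathcal L_0$ is built from the principal pieces $T_0,\tfrac12\partial_\tau,H_0',I$ and $\mathcal C$ collects the remainders $T_k-T_0$, $\partial_\tau D_k$, $D_k'$, $H_k'-H_0'$, all of lower order and hence compact (by Corollaries \ref{coro1}--\ref{coro2} and the analogous estimate for the hypersingular difference $T_k-T_0$). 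Invertibility of $\mathcal L_0$ can then be read off from the Calderón identities of Lemma \ref{cauchyprop} together with the invertible regularizers $\overline S_0$ and $\overline{\mathbf S}_0\overline{\mathbf S}_0$ of Lemma \ref{s0posi}, which normalise the orders; this exhibits $M-\mathrm iA=\mathcal L_0(I+\mathcal L_0^{-1}\mathcal C)$ with $\mathcal L_0$ invertible and $\mathcal L_0^{-1}\mathcal C$ compact, which is automatically Fredholm of index zero. I expect the symbol cancellation to be the genuinely delicate step, the rest being bookkeeping with the smoothing estimates already in hand. Combining index-zero Fredholmness with the injectivity above yields a unique solution in $H^{1/2}(\Gamma)^2$.
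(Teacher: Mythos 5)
Your injectivity argument is correct and is essentially identical to the paper's own: the paper also attaches the combined-layer potentials to a kernel element, invokes Theorem \ref{uniqueresult} to annihilate the exterior field, reads off the interior impedance condition $\partial_\nu\phi_-={\rm i}\phi_-$ from the jump relations, and applies Green's identity to get $\alpha=\beta=0$. The problem lies entirely in your Fredholm step.

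The claimed symbol cancellation is false, and this is a fatal gap. With the paper's conventions, the Calder\'on identity $D_0^2-S_0T_0=\frac{I}{4}$ and $H_0'=\partial_\tau S_0$ force $\sigma(T_0)=-\frac{|\xi|}{2}$, $\sigma(H_0')=\frac{\rm i}{2}\,\mathrm{sgn}(\xi)$, $\sigma(\partial_\tau)={\rm i}\xi$, while $D_k'$, $\partial_\tau D_k$, $H_k'-H_0'$, and $T_k-T_0$ are all of strictly lower order (Lemma \ref{asyform}, Corollaries \ref{coro1}--\ref{coro2}). Hence, modulo terms of order $\le -1$, the symbol of $M-{\rm i}A$ is
\[
\bigl(1+{\rm i}|\xi|\bigr)
\begin{bmatrix}
\frac{\rm i}{2} & \frac{\mathrm{sgn}(\xi)}{2} \\
\frac{\mathrm{sgn}(\xi)}{2} & -\frac{\rm i}{2}
\end{bmatrix},
\]
whose determinant is $(1+{\rm i}|\xi|)^2\left(\frac14-\frac14\right)=0$. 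So the determinant does not equal $-{\rm i}|\xi|$ modulo lower order; it vanishes identically through order $0$ --- the matrix is rank one at every frequency $\xi$. In particular your $\mathcal L_0$, built from $T_0$, $\tfrac12\partial_\tau$, $H_0'$, $I$, is itself a non-elliptic operator with singular symbol, so the factorization $M-{\rm i}A=\mathcal L_0(I+\mathcal L_0^{-1}\mathcal C)$ cannot even be written down. The degeneracy flagged in the Remark for $A_0$ is \emph{not} cancelled by combining $M$ with $-{\rm i}A$; it persists at all symbol orders down to $0$.

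This is exactly why the paper does not (and cannot) argue by ellipticity. Instead it constructs the explicit regularizer $N$ from Section 4.2 and proves, by pushing the wavenumber-dependent corrections of Lemma \ref{asyform} through the Calder\'on identities (Lemmas \ref{cauchyprop} and \ref{caldlemma}), that $N(M-{\rm i}A)$ and $(M-{\rm i}A)N$ equal $-\frac{k_s^2+k_p^2}{8}\overline{\mathbf S}_0\overline{\mathbf S}_0$ plus a compact operator. The operator that rescues invertibility is of order $-2$, and its ``ellipticity constant'' is proportional to $k_s^2+k_p^2$: the Fredholm property comes from the $k$-dependent lower-order terms, not from the principal symbols. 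Note also that once your index-zero-via-ellipticity shortcut is gone, injectivity of $M-{\rm i}A$ alone no longer suffices: the paper must additionally prove ${\rm Ker}(M'-{\rm i}A')=\{0\}$ for the $L^2$-adjoint (via the representation $\phi=\mathcal D_{k_p}\alpha-\mathcal S_{k_p}\partial_\tau\beta$, $\psi=\mathcal D_{k_s}\beta+\mathcal S_{k_s}\partial_\tau\alpha$) in order to conclude surjectivity from the Fredholm alternative; your proposal omits this entirely.
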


In what follows, we discuss the proofs of Theorems \ref{wp}, \ref{mainthm2} and
\ref{mainthm3} in details.

\subsection{Proof of Theorem \ref{wp}}

To construct an appropriate regularizer for the operator $A$, we consider the
interior problem of the coupled Helmholtz system
\begin{eqnarray}\label{interiorequ}
\begin{cases}
\Delta \phi +k_p^2\phi=0, \quad \Delta \psi +k_s^2\psi=0 &\quad \text{in}~ D,\\
-\partial_\nu \phi + \partial_\tau \psi = f, \quad
\partial_\tau \phi + \partial_\nu \psi = g
&\quad \text{on} ~ \Gamma.  
\end{cases}
\end{eqnarray}
Assume the solutions $\phi$ and $\psi$ have the integral representations
\[
\phi = S_{k_p}\alpha(x), \quad \psi = S_{k_s}\beta(x).
\]
Using the boundary condition, we obtain the integral equation
\[B\begin{bmatrix}
\alpha \\
\beta
\end{bmatrix}=
\begin{bmatrix}
f \\
g
\end{bmatrix},
\mbox{ where }
B=\begin{bmatrix}
-\frac{I}{2}-D'_{k_p} & H'_{k_s} \\
H'_{k_p} & \frac{I}{2}+D'_{k_s}
\end{bmatrix}.
\]
To prove Theorem \ref{wp}, we also need to derive the adjoint operator of $A$ in
$L^2(\Gamma)^2$. By applying Green's identity to equation \eqref{interiorequ},
it holds for
$x\in\Gamma^-$ that 
\begin{align*}
\left(\frac{I}{2}+D_{k_p}\right)\phi - S_{k_p} \partial_\nu \phi & = 
\phi,\\
\left(\frac{I}{2}+D_{k_s}\right)\psi - S_{k_s} \partial_\nu \psi & = 
\psi.
\end{align*}
It follows from the boundary condition in equation \eqref{interiorequ} that we have
\begin{align*}
\left(-\frac{I}{2}+D_{k_p}\right)\phi - S_{k_p} \partial_\tau \psi & =
-S_{k_p}f, \\
\left(-\frac{I}{2}+D_{k_s}\right)\psi + S_{k_s} \partial_\tau \phi & =
S_{k_s}g.
\end{align*}
Noting $H_k\phi = -S_k\partial_\tau \phi$, we obtain
\[
A'\begin{bmatrix}
\phi \\
\psi
\end{bmatrix}=\begin{bmatrix}
-\frac{I}{2}+D_{k_p} & H_{k_p} \\
H_{k_s} & \frac{I}{2}-D_{k_s}
\end{bmatrix}
\begin{bmatrix}
\phi \\
\psi
\end{bmatrix}
=
\begin{bmatrix}
-S_{k_p}f \\
-S_{k_s}g
\end{bmatrix}.
\]
It is easy to check that the operator $A'$ is the adjoint of the operator $A$
with respect to the bilinear form in $L^2(\Gamma)^2$ given by 
\[
\langle{\bf u}, {\bf v} \rangle = \int_{\Gamma} \big(u_1 v_1
+u_2 v_2\big)ds.
\]
where ${\bf u}=(u_1, u_2)$ and ${\bf v}=(v_1, v_2)$.
\begin{theorem}\label{coercivity_thm}
For any vector function ${\bf f}\in
H^{-1/2}(\Gamma)^2$, the operators $A, B$ satisfy
\begin{align*}
 (AB){\bf f} =
\left(-\frac{(k^2_s+k^2_p)}{2}\overline{\mathbf{S}}_0\overline{\mathbf{S}}
_0+K_1\right){\bf f},\\
 (BA){\bf f} =
\left(-\frac{(k^2_s+k^2_p)}{2}\overline{\mathbf{S}}_0\overline{\mathbf{S}}
_0+K_2\right){\bf f},
\end{align*}
where $K_1, K_2$ are compact operators from $H^{-1/2}(\Gamma)^2$ to
$H^{3/2}(\Gamma)^2$.
\end{theorem}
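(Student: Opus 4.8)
The statement says that, modulo compact remainders into $H^{3/2}(\Gamma)^2$, both $AB$ and $BA$ reduce to the scalar multiple $-\tfrac{k_s^2+k_p^2}{2}$ of the invertible operator $\overline{\mathbf{S}}_0\overline{\mathbf{S}}_0$, which is precisely what is needed to exhibit $B$ as a regularizer of $A$. The plan is to multiply the two $2\times 2$ operator matrices block by block and to track the order of each resulting operator by means of Corollaries \ref{coro1}--\ref{coro2}, Lemma \ref{asyform} and Lemma \ref{cauchyprop}. A direct computation gives for $AB$ the off-diagonal blocks
\[
D'_{k_p}H'_{k_s}+H'_{k_s}D'_{k_s}, \qquad -\,H'_{k_p}D'_{k_p}-D'_{k_s}H'_{k_p},
\]
the $\pm\tfrac12 H'$ terms cancelling, and the diagonal blocks
\[
\tfrac{I}{4}-(D'_{k_p})^2+H'_{k_s}H'_{k_p}, \qquad \tfrac{I}{4}-(D'_{k_s})^2+H'_{k_p}H'_{k_s}.
\]
Since $D'_k$ maps $H^{-1/2}(\Gamma)$ into $H^{5/2}(\Gamma)$ by Corollary \ref{coro1}, each off-diagonal block is compact into $H^{3/2}(\Gamma)$ and is absorbed into $K_1$.

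For the diagonal blocks I would first invoke Lemma \ref{cauchyprop} in the form $\tfrac{I}{4}=(D'_0)^2-(H'_0)^2$, so that $\tfrac{I}{4}-(D'_k)^2=(D'_0)^2-(D'_k)^2-(H'_0)^2$; here $(D'_0)^2$ and $(D'_k)^2$ are smoothing (of orders $-\infty$ and $-6$) and thus compact into $H^{3/2}(\Gamma)$. What remains is the principal part $H'_{k_s}H'_{k_p}-(H'_0)^2$, which I split through the exact identity
\[
H'_{k_s}H'_{k_p}-(H'_0)^2=(H'_{k_s}-H'_0)H'_0+H'_0(H'_{k_p}-H'_0)+(H'_{k_s}-H'_0)(H'_{k_p}-H'_0).
\]
By Corollary \ref{coro2} each factor $H'_k-H'_0$ is smoothing of order $2$, so the last term gains $4$ derivatives and is compact into $H^{3/2}(\Gamma)$, leaving only the first two terms.

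The main obstacle is to evaluate these two terms and to recognize their sum as a multiple of $S_0^2$. Here I would use $H'_k=\partial_\tau S_k$ together with Lemma \ref{asyform}, which gives $S_k-S_0=-\tfrac{k^2}{4}\widetilde{S}+R_k$, where $\widetilde{S}$ is the operator with kernel $|x-y|^2\Phi_0(x,y)$ and, by Theorem \ref{thm1}, $R_k$ gains at least $5$ derivatives. Hence $H'_k-H'_0=-\tfrac{k^2}{4}\,\partial_\tau\widetilde{S}+\partial_\tau R_k$, and the $R_k$-contributions gain at least $4$ derivatives and are again compact into $H^{3/2}(\Gamma)$. The crux is then the pair of kernel identities
\[
\partial_\tau S_0\,\partial_\tau\widetilde{S}=2\,S_0^2+R', \qquad \partial_\tau\widetilde{S}\,\partial_\tau S_0=2\,S_0^2+R'',
\]
with $R', R''$ gaining more than $2$ derivatives, which I expect to establish by extracting the leading singular kernel of $\partial_\tau\widetilde{S}$ via Theorem \ref{thm1} and comparing it with that of $S_0^2$ (equivalently, by matching the order $-2$ principal symbols $\tfrac12|\xi|^{-2}$ on both sides). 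Granting this, the two surviving terms sum to $-\tfrac{k_s^2}{4}(2S_0^2)-\tfrac{k_p^2}{4}(2S_0^2)=-\tfrac{k_s^2+k_p^2}{2}S_0^2$ modulo compact operators, and the identical value is obtained for the $(2,2)$ block.

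Finally, since $\overline{S}_0=S_0+c\!\int_\Gamma(\cdot)\,ds$ differs from $S_0$ by a finite-rank smoothing operator (Lemma \ref{s0posi}), one has $S_0^2=\overline{S}_0\,\overline{S}_0$ modulo operators compact into $H^{3/2}(\Gamma)$, so the diagonal of $AB$ becomes $-\tfrac{k_s^2+k_p^2}{2}\overline{\mathbf{S}}_0\overline{\mathbf{S}}_0$, giving the asserted form with $K_1$ collecting all compact remainders. The computation for $BA$ is identical: its diagonal blocks coincide with those of $AB$ while only the (still smoothing) off-diagonal blocks change, so the same principal part appears and $K_2$ absorbs the remainders.
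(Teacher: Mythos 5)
Your proposal is correct in outline and follows the same skeleton as the paper's proof: expand the $2\times 2$ block products, dispose of the off-diagonal blocks by compactness, use Lemma \ref{cauchyprop} to replace $\tfrac{I}{4}$ by $(D'_0)^2-(H'_0)^2$, and identify the principal part of $H'_{k_s}H'_{k_p}-(H'_0)^2$ from the expansion of $\Phi_k-\Phi_0$ in Lemma \ref{asyform}. Two points of comparison. For the off-diagonal blocks your argument is actually simpler than the paper's: since each summand contains exactly one factor $D'_k$ with $k>0$ (a gain of three derivatives by Corollary \ref{coro1}) composed with an order-zero operator $H'$, each block maps $H^{-1/2}(\Gamma)$ boundedly into $H^{5/2}(\Gamma)$ and is therefore compact into $H^{3/2}(\Gamma)$; the paper instead telescopes against the identity $D'_0H'_0+H'_0D'_0=0$, which is not needed for this purpose. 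Your three-term splitting of $H'_{k_s}H'_{k_p}-(H'_0)^2$ and the absorption of $\overline{S}_0-S_0$ as a smoothing perturbation likewise match the paper's argument.

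The one place where you offer a sketch rather than a proof is precisely where the paper does its real work: the identities $\partial_\tau\widetilde{S}\,\partial_\tau S_0=2S_0^2+R''$ and $\partial_\tau S_0\,\partial_\tau\widetilde{S}=2S_0^2+R'$, with $\widetilde{S}$ the operator with kernel $|x-y|^2\Phi_0(x,y)$. Your constants are right: with the standard normalization $S_0$ has principal symbol $\tfrac{1}{2|\xi|}$ and $\widetilde{S}$ has symbol $-|\xi|^{-3}$, so both sides indeed have symbol $\tfrac{1}{2}|\xi|^{-2}$. However, ``extracting the leading singular kernel of $\partial_\tau\widetilde{S}$ and comparing it with that of $S_0^2$'' is not by itself a proof: the leading kernel of a composition is not the product of the leading kernels, and Theorem \ref{thm1} only provides mapping properties of kernel classes, not composition formulas. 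The paper supplies exactly the missing device: integrate by parts to throw $\partial_{\tau(y)}$ onto the kernel, split $\partial_{\tau(x)}=\bigl(\partial_{\tau(x)}+\partial_{\tau(y)}\bigr)-\partial_{\tau(y)}$, note that the sum of tangential derivatives annihilates the leading singularity of the difference kernel (so that term is compact), and then compute
\begin{equation*}
-\partial^2_{\tau(y)}\Bigl(\tfrac{|k(x-y)|^2}{4}\Phi_0(x,y)\Bigr)=-\tfrac{k^2}{2}\Phi_0(x,y)+O\bigl(|x-y|\ln|x-y|\bigr),
\end{equation*}
so the surviving factor is literally the kernel of $-\tfrac{k^2}{2}S_0$ acting on $S_0\phi$, which yields $-\tfrac{k^2}{2}S_0S_0$ plus a compact remainder. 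Your symbol-matching alternative is legitimate, but it imports pseudodifferential calculus on $\Gamma$ that the paper's toolkit never develops; to stay within the paper's framework you need the integration-by-parts trick above. Once this step is made precise, the rest of your argument, including the observation that $BA$ has the same diagonal principal part and only different (still compact) off-diagonal blocks, is complete and matches the paper.
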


\begin{proof} 
It follows from a straightforward calculation that
\begin{align*}
AB&=
\begin{bmatrix}
-\frac{I}{2}+D'_{k_p} & H'_{k_s} \\
H'_{k_p} & \frac{I}{2}-D'_{k_s}
\end{bmatrix}
\begin{bmatrix}
-\frac{I}{2}-D'_{k_p} & H'_{k_s} \\
H'_{k_p} & \frac{I}{2}+D'_{k_s}
\end{bmatrix}\\
&=\begin{bmatrix}
\frac{I}{4}-(D'_{k_p})^2+H'_{k_s}H'_{k_p} & D'_{k_p}H'_{k_s}+H'_{k_s}D'_{k_s} \\
-D'_{k_s}H'_{k_p}-H'_{k_p}D'_{k_p} & \frac{I}{4}-(D'_{k_s})^2+H'_{k_p}H'_{k_s}
\end{bmatrix}.
\end{align*} 	
We first look at the off diagonal elements. It can be verified that 
\begin{align*}
D'_{k_p}H'_{k_s}+H'_{k_s}D'_{k_s}  = &
(D'_{k_p}-D'_0)H'_0+D'_{k_p}(H'_{k_s}-H'_0) \\
& \quad + H'_{k_s}(D'_{k_s}-D'_0)+(H'_{k_s}-H'_0)D'_{0},
\end{align*}
where $D'_0H'_0+H'_0D'_0$ vanishes due to Lemma \ref{cauchyprop}. It follows
from Corollaries \ref{coro1} and \ref{coro2} that
$(D'_{k_p}-D'_0)H'_0, H'_{k_s}(D'_{k_s}-D'_0)$ are bounded operators from
$H^{-1/2}(\Gamma)$ to $H^{-1/2+3}(\Gamma)$ and
$D'_{k_p}(H'_{k_s}-H'_0), (H'_{k_s}-H'_0)D'_{0}$ are bounded operators 
from $H^{-1/2}(\Gamma)$ to $H^{-1/2+5}(\Gamma)$. Therefore,
$D'_{k_p}H'_{k_s}+H'_{k_s}D'_{k_s}$ is a compact
operator from $H^{-1/2}(\Gamma)$ to $H^{3/2}(\Gamma)$. Similarly, we can show that 
$-D'_{k_s}H'_{k_p}-H'_{k_p}D'_{k_p}$ is also a compact operator from
$H^{-1/2}(\Gamma)$ to $H^{3/2}(\Gamma)$.

Next we check the diagonal elements. Using Lemma \ref{cauchyprop}, we obtain
\begin{align*}
\frac{I}{4}-(D'_{k_p})^2+H'_{k_s}H'_{k_p} = &
(H'_{k_s}-H'_0)H'_{0}+H'_{k_s}(H'_{k_p}-H'_0)\\
&\quad -(D'_{k_p})^2+(D'_{0})^2,\\
\frac{I}{4}-(D'_{k_s})^2+H'_{k_p}H'_{k_s} = &
(H'_{k_p}-H'_0)H'_{0}+H'_{k_p}(H'_{k_s}-H'_0)\\
&\quad -(D'_{
k_s})^2+(D'_{0})^2.
\end{align*}
From Corollaries \ref{coro1} and \ref{coro2}, the operators $(D'_{k_p})^2$, $(D'_{0})^2$ are bounded 
from $H^{-1/2}(\Gamma)$ to $H^{-1/2+6}(\Gamma)$. Hence they are both compact
from $H^{-1/2}(\Gamma)$ to $H^{3/2}(\Gamma)$. Consider the operator 
\begin{equation}\label{operatora11}
(H'_{k_s}-H'_0)H'_{0}+ H'_{k_s}(H'_{k_p}-H'_0).
\end{equation}
Clearly, it is bounded from $H^{-1/2}(\Gamma)$ to $H^{3/2}(\Gamma)$. Using the
asymptotic form in Lemma \ref{asyform}, we have the following decomposition
\begin{align*}
(H'_{k_s}-H'_0)H'_{0}\phi(x) & =  -\frac{\partial }{\partial
\tau(x)}\int_{\Gamma} \frac{|k_s(x-y)|^2}{4}\Phi_0(x,y)\\
&\qquad \times\frac{\partial}{\partial
\tau(y)}\int_{\Gamma}\Phi_0(y,z)\phi(z)ds(z)ds(y)+K_1\phi(x),\\
H'_{k_s}(H'_{k_p}-H'_0)\phi(x) & =  -\frac{\partial }{\partial
\tau(x)}\int_{\Gamma} \Phi_0(x,y)\\
&\qquad \times\frac{\partial }{\partial
\tau(y)}\int_{\Gamma}\frac{|k_p(y-z)|^2}{4}\Phi_0(y,z)\phi(z)ds(z)ds(y)
+K_2\phi(x), 
\end{align*}
where $K_1$ and $K_2$ are compact operators from $H^{-1/2}(\Gamma)$ to
$H^{3/2}(\Gamma)$. For the first operator in the right hand side of
$(H'_{k_s}-H'_0)H'_{0}$, we note 
\begin{align*}
&-\frac{\partial }{\partial \tau(x)}\int_{\Gamma}
\frac{|k_s(x-y)|^2}{4}\Phi_0(x,y)\frac{\partial }{\partial
\tau(y)}\int_{\Gamma}\Phi_0(y,z)\phi(z)ds(z)ds(y)\\
&=  \int_{\Gamma} \frac{\partial }{\partial \tau(x)}\bigg(\frac{\partial
}{\partial \tau(y)}\frac{|k_s(x-y)|^2}{4}\Phi_0(x,y)\bigg)\int_{\Gamma}\Phi_0(y,
z)\phi(z)ds(z)ds(y) \\
&= \int_{\Gamma} \bigg(\frac{\partial }{\partial \tau(x)}+\frac{\partial
}{\partial \tau(y)}\bigg)\bigg(\frac{\partial }{\partial
\tau(y)}\frac{|k_s(x-y)|^2}{4}\Phi_0(x,y)\bigg)\int_{\Gamma}\Phi_0(y,
z)\phi(z)ds(z)ds(y) \\
&\qquad - \int_{\Gamma} \frac{\partial }{\partial \tau(y)}\bigg(\frac{\partial
}{\partial \tau(y)}\frac{|k_s(x-y)|^2}{4}\Phi_0(x,y)\bigg)\int_{\Gamma}\Phi_0(y,
z)\phi(z)ds(z)ds(y) \\
& = M\phi(x) +N\phi(x),
\end{align*}
where $M$ denotes the first operator and $N$ denotes the second one.

We show that $M$ is a compact operator from $H^{-1/2}(\Gamma)$ to
$H^{3/2}(\Gamma)$. In fact, it holds
\begin{align*}
& \bigg(\frac{\partial }{\partial \tau(x)}+\frac{\partial }{\partial
\tau(y)}\bigg)\bigg(\frac{\partial }{\partial
\tau(y)}\frac{|k_s(x-y)|^2}{4}\Phi_0(x,y)\bigg) \\
& =  -\frac{k_s^2}{4\pi}(1-\tau(x)\tau(y))\ln(|x-y|)+O((x-y)\ln(|x-y|)) \\
& =  O((x-y)\ln(|x-y|)).
\end{align*}
By Theorem \ref{thm1}, $M$ is bounded from  $H^{-1/2}(\Gamma)$ to
$H^{5/2}(\Gamma)$ which implies that $M$ is compact from $H^{-1/2}(\Gamma)$ to
$H^{3/2}(\Gamma)$. For the operator $N$, it is clear to note that 
\begin{align*}
&-\frac{\partial }{\partial \tau(y)}\bigg(\frac{\partial }{\partial
\tau(y)}\frac{|k_s(x-y)|^2}{4}\Phi_0(x,y)\bigg) \\
& =  -\frac{k_s^2}{2}\Phi_0(x-y)+O((x-y)\ln(|x-y|)).
\end{align*}
Therefore,
\begin{eqnarray}\label{operatorN}
N\phi(x) = -\frac{k_s^2}{2}S_0S_0\phi(x)+K\phi(x),
\end{eqnarray}
where $K$ is compact from $H^{-1/2}(\Gamma)$ to $H^{3/2}(\Gamma)$.
Similarly, the following property can be shown for the operator $H'_{k_s}(H'_{k_p}-H'_0)$
\begin{eqnarray*}
\left(H'_{k_s}(H'_{k_p}-H'_0)\right)\phi(x) = -\frac{k_p^2}{2}S_0S_0\phi(x)+K\phi(x)
\end{eqnarray*}

Combining \eqref{operatora11}--\eqref{operatorN}, we obtain
\begin{eqnarray*}
 \left(\frac{I}{4}-(D'_{k_p})^2+H'_{k_s}H'_{k_p}\right)\phi=\left(-\frac{
(k_s^2+k_p^2)}{2}S_0S_0+K\right)\phi,
\end{eqnarray*}
where $K$ is a compact operator from $H^{-1/2}(\Gamma)$ to $H^{3/2}(\Gamma)$. Following the same argument, we can show
\begin{eqnarray*}
\left(\frac{I}{4}-(D'_{k_s})^2+H'_{k_p}H'_{k_s}\right)\phi=\left(-\frac{
(k_s^2+k_p^2)}{2}S_0S_0+K\right)\phi,
\end{eqnarray*}
which proves the first part of the theorem since $\overline{S}_0$ 
and $S_0$ only differ by a smooth operator.

For the second part, we have from straightforward calculations that 
\begin{align*}
BA&=
\begin{bmatrix}
-\frac{I}{2}-D'_{k_p} & H'_{k_s} \\
H'_{k_p} & \frac{I}{2}+D'_{k_s}
\end{bmatrix}
\begin{bmatrix}
-\frac{I}{2}+D'_{k_p} & H'_{k_s} \\
H'_{k_p} & \frac{I}{2}-D'_{k_s}
\end{bmatrix}
\\
&=\begin{bmatrix}
\frac{I}{4}-(D'_{k_p})^2+H'_{k_p}H'_{k_s} & -D'_{k_p}H'_{k_s}-H'_{k_s}D'_{k_s}
\\
D'_{k_s}H'_{k_p}+H'_{k_p}D'_{k_p} & \frac{I}{4}-(D'_{k_s})^2+H'_{k_p}H'_{k_s}
\end{bmatrix}.
\end{align*} 	
The rest of the proof is the same as the first part and is omitted here. 
\end{proof}

Next we consider the adjoint operator $A'$ and introduce the
operator
\begin{eqnarray*}
B'=\begin{bmatrix}
-\frac{I}{2}-D_{k_p} & H_{k_p} \\
H_{k_s} & \frac{I}{2}+D_{k_s}
\end{bmatrix},
\end{eqnarray*}
which is the adjoint of operator $B$ in $L^2(\Gamma)^2$. Following exactly the
same argument, we have the following result. 

\begin{theorem}
For any vector function ${\bf f}\in H^{1/2}(\Gamma)^2$, the operators $A', B'$
satisfy
\begin{align*}
(A'B'){\bf
f}=\left(-\frac{(k^2_s+k^2_p)}{2}\overline{\mathbf{S}}_0\overline{\mathbf{S}}
_0+K_1\right){\bf f} \\
(B'A'){\bf
f}=\left(-\frac{(k^2_s+k^2_p)}{2}\overline{\mathbf{S}}_0\overline{\mathbf{S}}
_0+K_2\right){\bf f} 
\end{align*}
where $K_1, K_2$ are compact operators from $H^{1/2}(\Gamma)^2$ to
$H^{5/2}(\Gamma)^2$.
\end{theorem}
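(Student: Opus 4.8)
The plan is to reproduce the block computation of Theorem~\ref{coercivity_thm} verbatim, with the adjoint operators $D'_k,H'_k$ replaced everywhere by $D_k,H_k$ and with every Sobolev index raised by one. First I would multiply out the two matrices; a direct calculation gives
\[
A'B'=\begin{bmatrix}
\frac{I}{4}-(D_{k_p})^2+H_{k_p}H_{k_s} & D_{k_p}H_{k_p}+H_{k_p}D_{k_s}\\[2pt]
-H_{k_s}D_{k_p}-D_{k_s}H_{k_s} & \frac{I}{4}-(D_{k_s})^2+H_{k_s}H_{k_p}
\end{bmatrix},
\]
while $B'A'$ has the same diagonal and the off-diagonal signs reversed, exactly as $BA$ differed from $AB$ before. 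Every entry therefore has the same algebraic form as the corresponding primed entry of Theorem~\ref{coercivity_thm} with all primes deleted, so it suffices to analyze one matrix.

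For the off-diagonal blocks I would telescope
\begin{align*}
D_{k_p}H_{k_p}+H_{k_p}D_{k_s}={}&(D_{k_p}-D_0)H_0+D_{k_p}(H_{k_p}-H_0)\\
&+H_{k_p}(D_{k_s}-D_0)+(H_{k_p}-H_0)D_0+(D_0H_0+H_0D_0),
\end{align*}
where $D_0H_0+H_0D_0=0$ by Lemma~\ref{cauchyprop}. Applying Corollaries~\ref{coro1} and~\ref{coro2} now with $r=1/2$, each of the four surviving terms maps $H^{1/2}(\Gamma)$ into some $H^{t}(\Gamma)$ with $t>5/2$ (the $D_{k}-D_0$ factors gain three derivatives, the $H_k-H_0$ factors gain two, and $D_0$ gains arbitrarily many); by the compact embedding $H^{t}\hookrightarrow H^{5/2}$ the off-diagonal blocks are compact from $H^{1/2}(\Gamma)$ to $H^{5/2}(\Gamma)$.

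For the diagonal blocks I would use $D_0^2-H_0^2=\frac{I}{4}$ from Lemma~\ref{cauchyprop} to write
\[
\frac{I}{4}-(D_{k_p})^2+H_{k_p}H_{k_s}=(H_{k_p}-H_0)H_0+H_{k_p}(H_{k_s}-H_0)+\big((D_0)^2-(D_{k_p})^2\big).
\]
The bracketed term is compact into $H^{5/2}$ since $(D_k)^2$ gains six derivatives, and the remaining bounded part is where the principal symbol must be extracted via Lemma~\ref{asyform}: representing $H_k-H_0$ through its leading kernel $\partial_{\tau(y)}\big(-\frac{k^2|x-y|^2}{4}\Phi_0(x,y)\big)$ and repeating the splitting into the operators $M$ and $N$ of Theorem~\ref{coercivity_thm} gives $(H_{k_p}-H_0)H_0=-\frac{k_p^2}{2}S_0S_0+(\text{compact})$ and $H_{k_p}(H_{k_s}-H_0)=-\frac{k_s^2}{2}S_0S_0+(\text{compact})$. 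Hence each diagonal entry equals $-\frac{k_s^2+k_p^2}{2}S_0S_0$ modulo a compact operator; replacing $S_0S_0$ by $\overline{S}_0\overline{S}_0$ changes the operator only by a smoothing, hence compact, term, and assembling the two blocks yields the asserted form $-\frac{k_s^2+k_p^2}{2}\overline{\mathbf{S}}_0\overline{\mathbf{S}}_0+K$.

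I expect the one genuinely new point to be this last asymptotic extraction, because in $H_k$ the tangential derivative falls on the second kernel variable rather than on the first as in $H'_k$, so the $M$--$N$ integration-by-parts must be rechecked in that configuration; the leading coefficient nonetheless comes out to be the same. It is worth noting that one cannot simply quote Theorem~\ref{coercivity_thm} through $L^2$-duality: although $(AB)'=B'A'$ and $(BA)'=A'B'$, the $L^2$-adjoint of a compact map $H^{-1/2}(\Gamma)\to H^{3/2}(\Gamma)$ is compact only as a map $H^{-3/2}(\Gamma)\to H^{1/2}(\Gamma)$, which is not the mapping $H^{1/2}(\Gamma)\to H^{5/2}(\Gamma)$ asserted here, so the higher regularity has to be read off the explicit kernels as above.
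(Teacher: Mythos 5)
Your proposal is correct and is essentially the paper's own proof: the paper disposes of this theorem with the single remark ``following exactly the same argument,'' referring to the proof of Theorem~\ref{coercivity_thm}, and your block computation, telescoping of the off-diagonal entries, use of Lemma~\ref{cauchyprop} and Corollaries~\ref{coro1}--\ref{coro2} at the shifted index $r=1/2$, and re-extraction of the principal part $-\tfrac{k_s^2+k_p^2}{2}S_0S_0$ carry that argument out faithfully for the unprimed operators. Your closing observation --- that one cannot simply dualize Theorem~\ref{coercivity_thm} because the $L^2$-adjoint of a compact map $H^{-1/2}(\Gamma)\to H^{3/2}(\Gamma)$ acts on the wrong pair of spaces --- is a correct and worthwhile justification for why the kernel analysis must be repeated rather than quoted.
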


Since $\overline{\mathbf{S}}_0\overline{\mathbf{S}}_0$ is invertible from
$H^{s}(\Gamma)^2$ to $H^{s+2}(\Gamma)^2$ with $s\in \mathbb{R}$, by the 
Fredholm alternative, the operators $A$ and $A'$ have finite dimensional
null spaces and their ranges are given by 
\begin{align*}
{\rm Ran}(A) & = \{{\bf f}\in H^{-1/2}(\Gamma)^2:
\langle {\bf f}, {\bf g}\rangle=0,\, {\bf g}\in
{\rm Ker}(A') \}, \\ 
{\rm Ran}(A') & = \{{\bf f}\in H^{1/2}(\Gamma)^2:
\langle{\bf f}, {\bf h}\rangle=0,\, {\bf h}\in
{\rm Ker}(A) \}.
\end{align*}
The kernel of $A$ and $A'$ are given in the following theorem.

\begin{theorem}\label{nullthm}
If neither $k_s$ or $k_p$ is the eigenvalue of the interior Dirichlet
problem for the Helmholtz equation in $D$, then ${\rm Ker}(A)=
{\rm Ker}(A')=\{0\}$. 
\end{theorem}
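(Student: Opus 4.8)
The plan is to reduce each of the two kernel conditions to a uniqueness statement for a Helmholtz/elastic boundary value problem, so that the hypothesis on $k_p,k_s$ together with Theorem \ref{uniqueresult} forces the relevant densities to vanish. Throughout I would read the rows of $A$ and $A'$ through the jump relations \eqref{jr}, the identity $H'_k=\partial_\tau S_k$, and its companion $H_k\psi=-\mathcal{S}_k(\partial_\tau\psi)$ (integration by parts on the closed curve $\Gamma$).

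First I would treat $\mathrm{Ker}(A)$. Given $(\alpha,\beta)\in H^{-1/2}(\Gamma)^2$ annihilated by $A$, set $\phi=\mathcal{S}_{k_p}\alpha$ and $\psi=\mathcal{S}_{k_s}\beta$ in $\mathbb R^2\setminus\Gamma$. The two rows of $A$ say exactly that the exterior traces satisfy $\partial_\nu\phi+\partial_\tau\psi=0$ and $\partial_\tau\phi-\partial_\nu\psi=0$ on $\Gamma$, so $(\phi,\psi)$ solves \eqref{bvp} with $f=g=0$; the exterior uniqueness of Theorem \ref{uniqueresult} gives $\phi=\psi=0$ in $\mathbb R^2\setminus\overline D$. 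Since single layer potentials are continuous across $\Gamma$, the interior traces vanish too, so $\phi,\psi$ solve the interior Helmholtz--Dirichlet problems in $D$ with zero data, and the assumption that $k_p,k_s$ are not interior Dirichlet eigenvalues forces $\phi=\psi=0$ in $D$. The normal-derivative jumps in \eqref{jr} then yield $\alpha=\partial_\nu\phi|_{\Gamma^-}-\partial_\nu\phi|_{\Gamma^+}=0$ and likewise $\beta=0$, so $\mathrm{Ker}(A)=\{0\}$.

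For $\mathrm{Ker}(A')$ I would run the mirror argument with combined potentials. Given $(\phi,\psi)\in H^{1/2}(\Gamma)^2$ in the kernel, define $U_p=\mathcal{D}_{k_p}\phi-\mathcal{S}_{k_p}(\partial_\tau\psi)$ and $U_s=-\mathcal{D}_{k_s}\psi-\mathcal{S}_{k_s}(\partial_\tau\phi)$. The two rows of $A'$ say precisely that the interior traces $U_p|_{\Gamma^-}$ and $U_s|_{\Gamma^-}$ vanish, so the Dirichlet non-eigenvalue hypothesis gives $U_p=U_s=0$ in $D$. Transporting this across $\Gamma$ through the jump relations, I would compute the exterior Cauchy data $U_p|_{\Gamma^+}=\phi,\ \partial_\nu U_p|_{\Gamma^+}=\partial_\tau\psi$ and $U_s|_{\Gamma^+}=-\psi,\ \partial_\nu U_s|_{\Gamma^+}=\partial_\tau\phi$. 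I would then assemble the exterior elastic field $\mathbf V=\nabla U_p+\mathbf{curl}\,U_s$; using $(\mathbf{curl}\,U_s)\cdot\nu=\partial_\tau U_s$ and $(\mathbf{curl}\,U_s)\cdot\tau=-\partial_\nu U_s$, the above data give $\mathbf V\cdot\nu=\partial_\nu U_p+\partial_\tau U_s=0$ and $\mathbf V\cdot\tau=\partial_\tau U_p-\partial_\nu U_s=0$ on $\Gamma$. Thus $\mathbf V$ is a radiating elastic field vanishing on $\Gamma$, and Theorem \ref{uniqueresult} forces $\mathbf V\equiv0$ outside $D$; taking divergence and scalar curl recovers $U_p=U_s=0$ in the exterior, whence $\phi=U_p|_{\Gamma^+}=0$ and $\psi=-U_s|_{\Gamma^+}=0$. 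Hence $\mathrm{Ker}(A')=\{0\}$.

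The step for $\mathrm{Ker}(A)$ is routine; the delicate part is $\mathrm{Ker}(A')$, where the main obstacle is choosing $U_p,U_s$ so that the rows of $A'$ collapse into clean interior Dirichlet traces, and then verifying, through careful bookkeeping of the value and normal-derivative jumps, that the recombined field $\mathbf V=\nabla U_p+\mathbf{curl}\,U_s$ has vanishing normal \emph{and} tangential components on $\Gamma$. As a shortcut for this second part I could instead invoke a Fredholm-index computation: $A-B$ is compact by Corollary \ref{coro1}, so $\mathrm{ind}\,A=\mathrm{ind}\,B$, while Theorem \ref{coercivity_thm} gives $BA=-\tfrac{k_s^2+k_p^2}{2}\overline{\mathbf S}_0\overline{\mathbf S}_0+K$ with $\overline{\mathbf S}_0\overline{\mathbf S}_0$ invertible, so $\mathrm{ind}(BA)=\mathrm{ind}\,A+\mathrm{ind}\,B=0$; hence $\mathrm{ind}\,A=0$ and $\dim\mathrm{Ker}(A')=\dim\mathrm{Ker}(A)=0$ once $\mathrm{Ker}(A)=\{0\}$ is established.
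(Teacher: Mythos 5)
Your two potential-theoretic arguments are correct and follow essentially the paper's own proof: the treatment of $\mathrm{Ker}(A)$ is identical, and for $\mathrm{Ker}(A')$ you use the same combined single/double layer ansatz as the paper. In fact your bookkeeping is better than the paper's: with the paper's choice $\psi=\mathcal{D}_{k_s}\beta+\mathcal{S}_{k_s}\partial_\tau\alpha$, the exterior Cauchy data in \eqref{iden1} produce the boundary data $f=2\partial_\tau\beta$, $g=2\partial_\tau\alpha$ in \eqref{bvp} rather than $f=g=0$ as claimed, and one must flip the sign of the second potential, exactly as your choice $U_s=-\mathcal{D}_{k_s}\psi-\mathcal{S}_{k_s}(\partial_\tau\phi)$ does, before Theorem \ref{uniqueresult} can be invoked.

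The Fredholm-index shortcut you sketch at the end, however, is not valid, and the obstruction is precisely the degeneracy this paper is built around. The identities $\mathrm{ind}\,A=\mathrm{ind}\,B$ and $\mathrm{ind}(BA)=\mathrm{ind}\,A+\mathrm{ind}\,B$ presuppose that $A$ and $B$ are Fredholm operators on $H^{-1/2}(\Gamma)^2$, and they are not: by the paper's Remark, $A=A_0+K$ with $K$ compact and $A_0^2=\mathrm{diag}\bigl({D_0'}^2,{D_0'}^2\bigr)$, which is compact because $D_0'$ is infinitely smoothing (Corollary \ref{coro1}). A Fredholm operator composed with itself is Fredholm, while a compact operator on an infinite-dimensional space never is; hence $A_0$, and therefore $A$ (and likewise $B$, which has the same principal part $A_0$), is not Fredholm on $H^{-1/2}(\Gamma)^2$, so $\mathrm{ind}\,A$ is simply undefined. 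For the same reason, $BA$ viewed as an endomorphism of $H^{-1/2}(\Gamma)^2$ is compact (its range lies in $H^{3/2}(\Gamma)^2$, which embeds compactly), hence not Fredholm; it is Fredholm of index zero only as an operator into $H^{3/2}(\Gamma)^2$ (invertible plus compact, by Theorem \ref{coercivity_thm}), but then index additivity cannot be applied, since $B$ itself does not map $H^{-1/2}(\Gamma)^2$ into $H^{3/2}(\Gamma)^2$ (it contains the identity terms $\mp I/2$), so there is no consistent choice of spaces making both factors Fredholm. This is exactly why the paper abandons classical Riesz--Fredholm theory, proves the regularizer identities for both $(A,B)$ and $(A',B')$, and then must kill $\mathrm{Ker}(A)$ and $\mathrm{Ker}(A')$ \emph{separately} by the potential-theoretic argument --- the route your main proof correctly takes; the shortcut should be deleted.
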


\begin{proof}
Assume $(\alpha(x),\beta(x))\in H^{-1/2}(\Gamma)^2$ satisfies
\[
 A\begin{bmatrix}
   \alpha\\
   \beta
  \end{bmatrix}
=\begin{bmatrix}
   0\\
   0
  \end{bmatrix}.
\]
Let 
\[
\phi(x) = \mathcal{S}_{k_p}\alpha(x), \quad \psi(x) =
\mathcal{S}_{k_s}\beta(x), \quad x\in\mathbb R^2\setminus\Gamma.
\]
Then $(\phi, \psi)$ satisfies \eqref{bvp} with $f=0, g=0.$ By the uniqueness
result in Theorem \ref{uniqueresult}, it holds
\[
\phi(x)=\psi(x)=0, \quad x\in\mathbb R^2\setminus\overline D.
\]
It follows from the continuity of single layer potential that
$\phi(x)=\psi(x)=0$ for $x\in \Gamma^-$. Since neither $k_s$ or $k_p$ is the
eigenvalue of the interior Dirichlet problem in $D$, we have $\phi(x)=\psi(x)=0$
for $x\in D$. Using the jump relation of double layer potential, we obtain
$\alpha(x)=\beta(x)=0$, which implies ${\rm Ker}(A)=\{0\}$.

Now assume $(\alpha(x),\beta(x))\in{\rm Ker}(A')$. Let $x\in\mathbb
R^2\setminus\Gamma$ and consider 
\begin{align*}
\phi (x) & = \mathcal{D}_{k_p}\alpha(x)-\mathcal{S}_{k_p} \partial_\tau
\beta(x), \\
\psi (x) & = \mathcal{D}_{k_s}\beta(x)+\mathcal{S}_{k_s} \partial_\tau
\alpha(x).
\end{align*}
Since $\phi(x)=\psi(x)=0$ when $x$ approaches $\Gamma$ from the
interior, by assumption, it holds $\phi(x)=\psi(x)=0$ for $x\in D$. By
Green's theorem, when $x$ approaches $\Gamma$ from the exterior, i.e.,
$x\rightarrow \Gamma^+$, we have
\begin{subequations}\label{iden1}
\begin{align}
\phi(x) = \alpha(x), & \quad \psi(x) = \beta(x), \\
  \partial_\nu \phi(x)=\partial_\tau
\beta(x), & \quad \partial_\nu \psi(x)=-\partial_\tau
\alpha(x),
\end{align}
\end{subequations}
which shows that $\phi$ and $\psi$ satisfies \eqref{bvp} with $f=g=0$.
Therefore, by the uniqueness of the scattering problem, $\phi(x)=\psi(x)=0$ in
$\mathbb R^2\setminus\overline D$. Following \eqref{iden1} it yields 
that $\alpha(x)=\beta(x)=0$, which completes the proof.   
\end{proof}

The well-posedness of the integral equation \eqref{mainint} follows immediately from the
Fredholm alternative, which completes the proof of Theorem \ref{wp}.

\begin{remark}
 In practice, the assumption of Theorem \ref{wp} may be violated for a given
domain $D$. Besides using the combined layer representation as given in Theorem
\ref{mainthm3}, this issue can also be resolved based on a  modified single
layer representation  when domain $D$ is simply connected and $0\in D$.
Following the idea in \cite{CK-83}, we can modify the integral representation
to 
\begin{subequations}\label{nonull}
\begin{align}
\phi(x) = \mathcal{S}_{k_p}\alpha(x) +\sum_{n=0}^{\infty}a_n
H^{(1)}_n(k_p|x|)e^{{\rm i}n\frac{x}{|x|}}
\int_{\Gamma}H^{(1)}_n(k_p|y|)e^{{\rm i}n\frac{y}{|y|}}\alpha(y)ds(y),\\
\psi(x) = \mathcal{S}_{k_s}\beta(x)+ \sum_{n=0}^{\infty}b_n
H^{(1)}_n(k_s|x|)e^{{\rm i}n\frac{x}{|x|}}
\int_{\Gamma}H^{(1)}_n(k_s|y|)e^{{\rm i}n\frac{y}{|y|}}\beta(y)ds(y),
\end{align}
\end{subequations} 
where $\hat x=x/|x|, \hat y=y/|y|$ and $H_n^{(1)}$ is the Hankel function of
the first kind with order $n$. Under some appropriate assumptions on $\{a_n\}$
and $\{b_n\}$, it can be shown the the representation \eqref{nonull} is free of
resonance. Readers are referred to \cite{CK-83} for more details.
\end{remark}

\subsection{Proof of Theorem \ref{mainthm2}}

Introduce the operator
\begin{eqnarray*}
N=
\begin{bmatrix}
S_{k_p} & 0 \\
0 & S_{k_s}
\end{bmatrix}
\begin{bmatrix}
T_{k_p}& \frac{1}{2}\partial_{\tau}-\partial_{\tau} D_{k_s} \\
\frac{1}{2}\partial_{\tau}-\partial_{\tau} D_{k_p} & -T_{k_s}
\end{bmatrix}
\begin{bmatrix}
S_{k_p} & 0 \\
0 & S_{k_s}
\end{bmatrix}.
\end{eqnarray*}
We show that $N$ is a regularizer of $M$ in \eqref{mainint2}. Let us
begin with the Calderon identities which can be found in \cite{N-01}.

\begin{lemma}\label{caldlemma}
Let $I$ be the identity operator and $k$ be the wavenumber with $k>0$. For a smooth closed curve $\Gamma\subset \mathbb{R}^2$, it holds
that 
\[
D^2_k-S_{k}T_k=\frac{I}{4}, \quad D'^2_k-T_{k}S_k=\frac{I}{4}.
\]
\end{lemma}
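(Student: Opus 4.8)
The plan is to obtain both identities at once from the idempotency of the interior Calder\'on projector, reading them off as the diagonal entries of a single operator-square relation. First I would fix the representation formula: for any $u$ with $\Delta u+k^2u=0$ in $D$, Green's third identity gives $u=\mathcal{S}_k(\partial_\nu u)-\mathcal{D}_k u$ in $D$, where $\nu$ is the exterior unit normal to $\Gamma$. The key idea is to feed \emph{arbitrary} densities into this formula rather than genuine Cauchy data, so I would introduce, for $(\varphi,\psi)\in H^{1/2}(\Gamma)\times H^{-1/2}(\Gamma)$, the field
\[
w(x)=\mathcal{S}_k\psi(x)-\mathcal{D}_k\varphi(x),\qquad x\in\mathbb{R}^2\setminus\Gamma,
\]
which solves the Helmholtz equation in $D$ and in $\mathbb{R}^2\setminus\overline{D}$.

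Next I would compute the interior Cauchy data of $w$ from the jump relations \eqref{jr} together with $T_k=\partial_\nu\mathcal{D}_k$ (which is continuous across $\Gamma$). Taking the interior ($\Gamma^-$) limits yields
\[
\begin{bmatrix}\gamma_0^- w\\ \gamma_1^- w\end{bmatrix}=\left(\tfrac{1}{2}I+\mathcal{C}\right)\begin{bmatrix}\varphi\\\psi\end{bmatrix},\qquad \mathcal{C}=\begin{bmatrix}-D_k & S_k\\ -T_k & D'_k\end{bmatrix},
\]
so that $P^-:=\tfrac{1}{2}I+\mathcal{C}$ is the interior Calder\'on projector acting on Cauchy data.

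The heart of the argument is the idempotency $(P^-)^2=P^-$. Since $w|_D$ is itself an interior Helmholtz solution, the representation formula reproduces it from its own interior Cauchy data, namely $w=\mathcal{S}_k(\gamma_1^- w)-\mathcal{D}_k(\gamma_0^- w)$ in $D$. Taking interior traces of this second representation and invoking the same jump relations shows that $P^-$ applied to $(\gamma_0^- w,\gamma_1^- w)$ returns $(\gamma_0^- w,\gamma_1^- w)$ unchanged; since $(\gamma_0^- w,\gamma_1^- w)=P^-(\varphi,\psi)$ and $(\varphi,\psi)$ is arbitrary, this is precisely $(P^-)^2=P^-$. Expanding $\tfrac{1}{4}I+\mathcal{C}+\mathcal{C}^2=\tfrac{1}{2}I+\mathcal{C}$ gives $\mathcal{C}^2=\tfrac{1}{4}I$, and the two diagonal entries of $\mathcal{C}^2$ are exactly $D_k^2-S_kT_k$ and $D'^2_k-T_kS_k$, which yields both Calder\'on identities.

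The step I expect to be the main obstacle is the rigorous treatment of the hypersingular operator $T_k$: one must justify that $\partial_\nu\mathcal{D}_k$ has no jump across $\Gamma$ and defines a bounded map $H^{1/2}(\Gamma)\to H^{-1/2}(\Gamma)$, so that every composition above is well defined and the trace and jump relations hold in the correct Sobolev spaces. For the smooth closed curve assumed here this is classical—integrating by parts transfers a tangential derivative onto the density and re-expresses $T_k$ through weakly singular kernels—so all the operator manipulations are fully justified.
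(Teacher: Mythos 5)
Your proposal is correct, but it is worth noting that the paper itself offers no proof of this lemma at all: it simply quotes the Calder\'on identities as known results from N\'ed\'elec's book \cite{N-01}. What you have written is the standard self-contained derivation that the cited reference (in essence) contains, namely idempotency of the interior Calder\'on projector. I checked your computation against the paper's sign conventions and it is consistent: with the paper's jump relations \eqref{jr}, the interior Cauchy data of $w=\mathcal{S}_k\psi-\mathcal{D}_k\varphi$ are indeed
\[
\gamma_0^- w=\Bigl(\tfrac{1}{2}I-D_k\Bigr)\varphi+S_k\psi,\qquad
\gamma_1^- w=-T_k\varphi+\Bigl(\tfrac{1}{2}I+D_k'\Bigr)\psi,
\]
so $P^-=\tfrac12 I+\mathcal{C}$ with $\mathcal{C}=\begin{bmatrix}-D_k & S_k\\ -T_k & D_k'\end{bmatrix}$, and the reproduction of $w|_D$ from its own Cauchy data via Green's representation gives $(P^-)^2=P^-$, hence $\mathcal{C}^2=\tfrac14 I$. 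The diagonal entries of $\mathcal{C}^2$ are exactly $D_k^2-S_kT_k$ and $D_k'^2-T_kS_k$, which proves the lemma (your argument even yields the two off-diagonal relations $S_kD_k'=D_kS_k$ and $T_kD_k=D_k'T_k$ for free). You also correctly isolate the only genuinely technical ingredient: that $T_k=\partial_\nu\mathcal{D}_k$ has no jump across $\Gamma$ and maps $H^{1/2}(\Gamma)\to H^{-1/2}(\Gamma)$ boundedly, which for a smooth closed curve is classical (Maue/integration-by-parts reduction to weakly singular kernels), and that Green's representation applies to $w|_D\in H^1(D)$ in the variational sense. So your route buys a complete proof where the paper relies on a citation; the paper's route buys brevity. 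One small caution: the paper's own later use of Green's identity (in the proof of Theorem \ref{wp}) appears to carry a different sign normalization than the one you derive, so if your argument were spliced into the paper one should keep your conventions, which are the ones consistent with \eqref{jr}.
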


Analogous to Theorem \ref{coercivity_thm}, we have the following result for 
operators $M$ and $N$.

\begin{theorem}\label{doublethm}
For any vector function ${\bf f}\in H^{1/2}(\Gamma)^2$, the operators $M$ and $N$
satisfy  
\begin{align*}
(NM){\bf
f}=\left(-\frac{(k_s^2+k_p^2)}{8}\overline{\mathbf{S}}_0\overline{\mathbf{S}}
_0+K_1\right){\bf f}, \\
(MN){\bf
f}=\left(-\frac{(k_s^2+k_p^2)}{8}\overline{\mathbf{S}}_0\overline{\mathbf{S}}
_0+K_2\right){\bf f},
\end{align*}
where $K_1, K_2$ are compact operators from $H^{1/2}(\Gamma)^2$ to
$H^{5/2}(\Gamma)^2$.
\end{theorem}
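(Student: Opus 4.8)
The plan is to imitate the proof of Theorem \ref{coercivity_thm}, now with $N$ playing the role of a regularizer for the double-layer system operator $M$. Writing $\mathbf{S}=\mathrm{diag}(S_{k_p},S_{k_s})$ so that $N=\mathbf{S}\tilde{M}\mathbf{S}$, where $\tilde M$ is the sign-flipped version of $M$, I would expand the products $NM$ and $MN$ entry by entry, extract a single leading term of order $-2$, and show that every remaining contribution smooths by strictly more than two orders and is therefore compact from $H^{1/2}(\Gamma)^2$ into $H^{5/2}(\Gamma)^2$ by the Rellich embedding.

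The new ingredient relative to Theorem \ref{coercivity_thm} is the pair of Calderon identities in Lemma \ref{caldlemma}: whenever a product $S_kT_k$ or $T_kS_k$ appears I would replace it by $D_k^2-\tfrac{I}{4}$ or $(D'_k)^2-\tfrac{I}{4}$, and since $D_k,D'_k$ smooth by three orders (Corollary \ref{coro1}) the terms $D_k^2,(D'_k)^2$ are compact, so each hypersingular factor $T_k$ is effectively replaced by $-\tfrac{I}{4}$ modulo compact operators. Together with the elementary relations $\partial_\tau S_k=H'_k$ and $S_k\partial_\tau=-H_k$, which move the tangential derivatives onto the single-layer operators, a direct computation of the $(1,1)$ entry gives $N_{11}M_{11}=\tfrac{1}{16}I$ and $N_{12}M_{21}=\tfrac14 H_{k_p}H_{k_s}$, both modulo compact operators. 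Invoking the Cauchy identity $H_0^2=D_0^2-\tfrac{I}{4}$ (Lemma \ref{cauchyprop}) together with the difference estimate $H_k-H_0:H^r(\Gamma)\to H^{r+2}(\Gamma)$ (Corollary \ref{coro2}) splits $H_{k_p}H_{k_s}$ into $-\tfrac{I}{4}+H_0(H_{k_s}-H_0)+(H_{k_p}-H_0)H_0$ modulo compact; the two $\pm\tfrac{1}{16}I$ contributions then cancel, leaving $(NM)_{11}=\tfrac14\big[H_0(H_{k_s}-H_0)+(H_{k_p}-H_0)H_0\big]$ up to a compact operator.

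The crux is to evaluate these two products to leading order. They are exactly the $L^2(\Gamma)$-adjoints of $(H'_{k_s}-H'_0)H'_0$ and $H'_0(H'_{k_p}-H'_0)$, which in the proof of Theorem \ref{coercivity_thm} were already shown, via the asymptotic expansion of $\Phi_k$ in Lemma \ref{asyform}, to equal $-\tfrac{k_s^2}{2}S_0S_0$ and $-\tfrac{k_p^2}{2}S_0S_0$ modulo compact. Since $S_0$ is self-adjoint and $(H'_k)^*=H_k$, taking adjoints yields the same leading terms, so that $(NM)_{11}=-\tfrac{k_s^2+k_p^2}{8}S_0S_0$ modulo compact; because $\overline{S}_0$ differs from $S_0$ only by a smoothing perturbation, $S_0S_0=\overline{S}_0\overline{S}_0$ modulo compact, which is the claimed diagonal term. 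The $(2,2)$ entry is identical. For the off-diagonal entries I would again exhibit a cancellation of the surviving order-zero pieces: using the Calderon identity to reduce $H'_{k_s}T_{k_s}$ to $-\tfrac14\partial_\tau$, the entry $(NM)_{12}$ receives $+\tfrac18 H_{k_p}$ from $N_{11}M_{12}$ and $-\tfrac18 H_{k_p}$ from $N_{12}M_{22}$, so their sum is compact; this is the analogue of the cancellation $D'_0H'_0+H'_0D'_0=0$ used before.

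I expect the main obstacle to be the bookkeeping that certifies compactness into $H^{5/2}(\Gamma)^2$ rather than mere boundedness, namely verifying that each discarded product gains strictly more than two orders of smoothing, and making the two delicate cancellations rigorous — the $\pm\tfrac{1}{16}I$ cancellation on the diagonal and the $\pm\tfrac18 H_{k_p}$ cancellation off the diagonal — while correctly tracking the signs introduced by $S_k\partial_\tau=-H_k$ and by the sign flips distinguishing $\tilde M$ from $M$. The product $MN$ is handled in exactly the same way and would be omitted, as at the end of the proof of Theorem \ref{coercivity_thm}.
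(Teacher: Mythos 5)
Your proposal is correct and is essentially the paper's own proof: the same blockwise expansion of $NM$, the same use of the Calderon identities of Lemma \ref{caldlemma} together with $S_k\partial_\tau=-H_k$ and $\partial_\tau S_k=H'_k$, the same two cancellations (the identity pieces $\pm\tfrac{1}{16}I$ on the diagonal and the $\pm\tfrac18 H_{k_p}$ pieces off the diagonal), and the same reduction of the surviving diagonal term to the $-\tfrac{k_s^2+k_p^2}{2}S_0S_0$ computation from the proof of Theorem \ref{coercivity_thm}, with $MN$ dismissed by symmetry exactly as in the paper. Your one small variation — transferring that computation from the primed to the unprimed operators by taking $L^2(\Gamma)$-adjoints, where the paper simply re-invokes the kernel argument — is legitimate, because the estimates underlying the compact remainders (Theorem \ref{thm1} and Corollaries \ref{coro1}--\ref{coro2}) are stated for the adjoint kernels too and hold at every Sobolev index, so the adjoint remainders are indeed compact from $H^{1/2}(\Gamma)$ into $H^{5/2}(\Gamma)$ and not merely between the dual pair of spaces.
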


\begin{proof}
It follows from a tedious but straightforward calculation that  
\begin{align*}
NM &= \begin{bmatrix}
S_{k_p}T_{k_p}S_{k_p} &
\frac{1}{2}S_{k_p}\partial_{\tau}S_{k_s}-S_{k_p}\partial_{\tau}D_{k_s}S_{k_s}
\\
\frac{1}{2}S_{k_s}\partial_{\tau}S_{k_p}-S_{k_s}\partial_{\tau}D_{k_s}
S_{k_p} & -S_{k_s}T_{k_s}S_{k_s}
\end{bmatrix}
\begin{bmatrix}
T_{k_p} & \frac{1}{2}\partial_{\tau}+\partial_{\tau} D_{k_s} \\
\frac{1}{2}\partial_{\tau}+\partial_{\tau} D_{k_p} & -T_{k_s}
\end{bmatrix}
\notag \\
&=\begin{bmatrix}
A_{11} & A_{12} \\
A_{21} & A_{22}
\end{bmatrix},
\end{align*}
where 
\begin{align*}
A_{11} &=
S_{k_p}T_{k_p}S_{k_p}T_{k_p}+\frac{1}{4}S_{k_p}\partial_{\tau}S_{k_s}\partial_{
\tau}-S_{k_p}\partial_{\tau}D_{k_s}S_{k_s}\partial_{\tau}D_{k_p}+\frac{1}{2}S_{
k_p}\partial_{\tau}S_{k_s}\partial_{\tau}D_{k_p}-\frac{1}{2}S_{k_p}\partial_{
\tau}D_{k_s}S_{k_s}\partial_{\tau},\\
A_{12}
&= \frac{1}{2}S_{k_p}T_{k_p}S_{k_p}\partial_{\tau}-\frac{1}{2}S_{k_p}\partial_{
\tau }S_{k_s}T_{k_s}+S_{k_p}T_{k_p}S_{k_p}\partial_{\tau}
D_{k_s}+S_{k_p}\partial_{\tau}D_{k_s}S_{k_s}T_{k_s}, \\
A_{21} &=  \frac{1}{2}S_{k_s}\partial_{\tau}S_{k_p}T_{k_p}-
\frac{1}{2}S_{k_s}T_{k_s}S_{k_s}\partial_{\tau}-S_{k_s}\partial_{\tau}D_{k_s}S_{
k_p}T_{k_p} -S_{k_s}T_{k_s}S_{k_s}\partial_{\tau}D_{k_p}, \\
A_{22} &=
\frac{1}{4}S_{k_s}\partial_{\tau}S_{k_p}\partial_{\tau}-\frac{1}{2}S_{k_s}
\partial_{\tau}D_{k_s}S_{k_p}\partial_{\tau} +
\frac{1}{2}S_{k_s}\partial_{\tau}S_{k_p}\partial_{\tau}D_{k_s}-S_{k_s}\partial_{
\tau}D_{k_s}S_{k_p}\partial_{\tau}D_{k_s} + S_{k_s}T_{k_s}S_{k_s}T_{k_s}.
\end{align*}

For $A_{11}$, noting $S_{k}\partial\tau= -H_{k}$, $\partial{\tau}S_{k}= H'_{k}$,
and using Corollaries \ref{coro1} and \ref{coro2}, we can show that the
following operators are bounded:
\begin{align*}
S_{k_p}\partial_{\tau}D_{k_s}S_{k_s}\partial_{\tau}D_{k_p}=H_{k_p}D_{k_s}H_{
k_s}D_{k_p} : H^{1/2}(\Gamma)\rightarrow H^{1/2+6}(\Gamma), \\
S_{k_p}\partial_{\tau}S_{k_s}\partial_{\tau}D_{k_p} = H_{k_p}H_{k_s}D_{k_p} :
H^{1/2}(\Gamma)\rightarrow H^{1/2+3}(\Gamma), \\
S_{k_p}\partial_{\tau}D_{k_s}S_{k_s}\partial_{\tau}  = 
H_{k_p}D_{k_s}H_{k_s} : H^{1/2}(\Gamma)\rightarrow H^{1/2+3}(\Gamma),
\end{align*} 
which implies that they are all compact from $H^{1/2}(\Gamma)$ to
$H^{5/2}(\Gamma)$. For the first two operators in $A_{11}$, combining Lemmas
\ref{cauchyprop} and \ref{caldlemma} leads to 
\begin{align*}
S_{k_p}T_{k_p}S_{k_p}T_{k_p}+\frac{1}{4}S_{k_p}\partial_{\tau}S_{k_s}\partial_{
\tau}&= \left(D^2_{k_p}-\frac{I}{4}\right)\left(D^2_{k_p}-\frac{I}{4}\right)
+\frac{1}{4}H_{k_p}H_{k_s} \\
&= \frac{1}{4}(\frac{I}{4}+H_{k_p}H_{k_s}-2D^2_{k_p})+D^4_{k_p}.
\end{align*}
According to the proof of Theorem \ref{coercivity_thm}, we have 
\[
\left(\frac{I}{4}+H_{k_p}H_{k_s}-2D^2_{k_p}\right)\phi =
\left(-\frac{(k_s^2+k_p^2)}{2}S_0S_0+K\right)\phi
\]
where $K$ is a compact operator from $H^{1/2}(\Gamma)$ to $H^{5/2}(\Gamma)$.
Therefore, combining all the operators of $A_{11}$ yields
that 
\[
A_{11}\phi=\left(-\frac{(k_s^2+k_p^2)}{8}\overline{S}_0\overline{S}
_0+K\right)\phi, \quad\forall\phi\in H^{1/2}(\Gamma).
\]

For $A_{12}$, we apply Lemma \ref{caldlemma} for the first two terms and obtain 
\begin{align*}
\frac{1}{2}S_{k_p}T_{k_p}S_{k_p}\partial_{\tau}-\frac{1}{2}S_{k_p}\partial_{
\tau}S_{k_s}T_{k_s} &=
\frac{1}{2}\Big(H_{k_p}\big(D^2_{k_s}-\frac{I}{4}\big)-\big(D^2_{k_p}-\frac{I}{
4}\big)H_{k_p}\Big)\\
&= \frac{1}{2}\Big(H_{k_p}D^2_{k_s}-D^2_{k_p}H_{k_p}\Big),
\end{align*}
which is a compact operator from $H^{1/2}(\Gamma)$ to $H^{5/2}(\Gamma)$. For the last two terms in $A_{12}$, by Lemma
\ref{coro1}, we have 
\begin{align*}
S_{k_p}T_{k_p}S_{k_p}\partial_{\tau}
D_{k_s}+S_{k_p}\partial_{\tau}D_{k_s}S_{k_s}T_{k_s} &=
-\big(D^2_{k_p}-\frac{I}{4}\big)H_{k_p}D_{k_s} 
-H_{k_p}D_{k_s}\big(D^2_{k_s}-\frac{I}{4}\big)\\
& =  \frac{1}{2}H_{k_p}D_{k_s} - D^2_{k_p}H_{k_p}D_{k_s}-H_{k_p}D_{k_s}D^2_{k_s},
\end{align*}
which is also a compact operator from $H^{1/2}(\Gamma)$ to $H^{5/2}(\Gamma)$.
Hence we conclude that $A_{12}$ is compact from
$H^{1/2}(\Gamma)$ to $H^{5/2}(\Gamma)$.
Similar argument leads to the conclusion that $A_{21}$ is compact from
$H^{1/2}(\Gamma)$ to $H^{5/2}(\Gamma)$ and $A_{22}$ satisfies
\[
 A_{22}\phi=\left(-\frac{(k_s^2+k_p^2)}{8}\overline{S}_0\overline{S}
_0+K\right)\phi,
\]
where $K$ is compact from $H^{1/2}(\Gamma)$ to $H^{5/2}(\Gamma)$. The first
equality is proved and the second equality follows the same argument. 
\end{proof}

Now let us consider the adjoint operator of $M$ in $L^2(\Gamma)^2$:
\begin{eqnarray*}
M'=\begin{bmatrix}
T_{k_p} & -(\frac{1}{2}\partial_{\tau}+ D'_{k_p}\partial_{\tau}) \\
-(\frac{1}{2}\partial_{\tau}+ D'_{k_s}\partial_{\tau}) & -T_{k_s}
\end{bmatrix}.
\end{eqnarray*}
The adjoint operator of $N$ is 
\begin{eqnarray*}
N'=
\begin{bmatrix}
S_{k_p} & 0 \\
0 & S_{k_s}
\end{bmatrix}
\begin{bmatrix}
T_{k_p}& -(\frac{1}{2}\partial_{\tau}- D'_{k_p}\partial_{\tau}) \\
-(\frac{1}{2}\partial_{\tau}- D'_{k_s}\partial_{\tau}) & -T_{k_s}
\end{bmatrix}
\begin{bmatrix}
S_{k_p} & 0 \\
0 & S_{k_s}
\end{bmatrix}.
\end{eqnarray*}

Using a similar argument as those to prove Theorem \ref{doublethm}, we
have the following properties for the adjoint operators $M'$ and $N'$. 

\begin{theorem}
For any vector function ${\bf f}\in H^{1/2}(\Gamma)^2$, the operators $M'$ and
$N'$ satisfy
\begin{align*}
(N'M'){\bf
f}=\left(-\frac{(k_s^2+k_p^2)}{8}\overline{\mathbf{S}}_0\overline{\mathbf{S}}
_0+K_1\right){\bf f}, \\
(M'N'){\bf
f}=\left(-\frac{(k_s^2+k_p^2)}{8}\overline{\mathbf{S}}_0\overline{\mathbf{S}}
_0+K_2\right){\bf f},
\end{align*}
where $K_1, K_2$ are compact operators from $H^{1/2}(\Gamma)^2$ to
$H^{5/2}(\Gamma)^2$.
\end{theorem}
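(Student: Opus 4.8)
The plan is to repeat, mutatis mutandis, the block computation carried out in the proof of Theorem \ref{doublethm}, since $M'$ and $N'$ are structurally identical to $M$ and $N$. Indeed, $M'$ is obtained from $M$ by replacing the layer operators $D_k,H_k$ by their $L^2(\Gamma)$-adjoints $D'_k,H'_k$, by using $(\partial_\tau)'=-\partial_\tau$ to move the tangential derivative to the opposite side, and by keeping the diagonal blocks $\pm T_k$ unchanged because the hypersingular operator satisfies $T_k'=T_k$; the operator $N'$ retains the same self-adjoint single-layer envelopes $S_{k_p},S_{k_s}$. By Corollaries \ref{coro1} and \ref{coro2} the primed operators have exactly the same mapping classes as the unprimed ones, and the Calder\'on identities of Lemma \ref{caldlemma} together with the Cauchy relations of Lemma \ref{cauchyprop} hold verbatim with primes. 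Hence every estimate used for $NM$ and $MN$ transfers termwise to $N'M'$ and $M'N'$.

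Concretely, I would expand $N'M'$ into a $2\times2$ block matrix with entries $A'_{ij}$ that are the formal analogues of the $A_{ij}$ in Theorem \ref{doublethm}. For the off-diagonal blocks $A'_{12},A'_{21}$, the same use of Lemma \ref{caldlemma} to rewrite $S_kT_k$ and $T_kS_k$ as $D_k^2-\tfrac{I}{4}$, followed by the regularity gain of each $D'_k$ factor from Corollary \ref{coro1}, shows that they are compact from $H^{1/2}(\Gamma)$ to $H^{5/2}(\Gamma)$. For the diagonal blocks $A'_{11},A'_{22}$, the Calder\'on algebra again collapses the principal part to a multiple of the combination $\tfrac{I}{4}+H'_{k_p}H'_{k_s}-2(D'_{k_p})^2$ already analysed in Theorem \ref{coercivity_thm} (the $D'$ terms being compact and absorbed into the remainder); inserting the asymptotic expansion of Lemma \ref{asyform} isolates the leading term $-\tfrac{(k_s^2+k_p^2)}{2}S_0S_0$, and the factor $\tfrac14$ produced by the Calder\'on identity yields the stated coefficient $-\tfrac{(k_s^2+k_p^2)}{8}\overline{\mathbf S}_0\overline{\mathbf S}_0$, since $S_0$ and $\overline{S}_0$ differ by a smooth operator. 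The computation for $M'N'$ is identical after relabeling.

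A tempting shortcut is pure duality: as $M'$ and $N'$ are the $L^2(\Gamma)^2$-adjoints of $M$ and $N$, Theorem \ref{doublethm} gives $N'M'=(MN)'=-\tfrac{(k_s^2+k_p^2)}{8}(\overline{\mathbf S}_0\overline{\mathbf S}_0)'+K_2'$, and $(\overline{\mathbf S}_0\overline{\mathbf S}_0)'=\overline{\mathbf S}_0\overline{\mathbf S}_0$ because $\overline{S}_0$ is self-adjoint. I expect the main obstacle to be that this route does not, by itself, deliver the \emph{stated} Sobolev mapping property: the adjoint of an operator that is compact from $H^{1/2}(\Gamma)$ to $H^{5/2}(\Gamma)$ is a priori only compact from $H^{-5/2}(\Gamma)$ to $H^{-1/2}(\Gamma)$. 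One must still verify, using the explicit boundedness ranges of Corollaries \ref{coro1}--\ref{coro2}, that $N'M'-P$ and $M'N'-P$ actually gain the extra derivatives on the scale $H^{1/2}(\Gamma)^2\to H^{5/2}(\Gamma)^2$, where $P=-\tfrac{(k_s^2+k_p^2)}{8}\overline{\mathbf S}_0\overline{\mathbf S}_0$. This bookkeeping on the correct Sobolev scale, rather than the cancellation of principal symbols, is the real content, and it is cleanest to obtain directly from the block expansion above.
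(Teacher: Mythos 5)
Your proposal is correct and takes essentially the same route as the paper, which proves this theorem by simply asserting that the block computation of Theorem \ref{doublethm} carries over verbatim to $M'$ and $N'$ (using the primed Calder\'on and Cauchy identities and the identical mapping properties of the primed operators from Corollaries \ref{coro1}--\ref{coro2}). Your added observation that pure $L^2$-duality cannot replace this computation --- since the adjoint of a compact operator from $H^{1/2}(\Gamma)$ to $H^{5/2}(\Gamma)$ is a priori compact only from $H^{-5/2}(\Gamma)$ to $H^{-1/2}(\Gamma)$ --- correctly explains why the direct block expansion is the right (and the paper's implicit) argument.
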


By the Fredholm alternative, the following result guarantees the
existence of a unique solution to the integral equation \eqref{mainint2}. The proof
follows the same idea as proving Theorem \ref{nullthm} and is omitted for
brevity. 

\begin{theorem}\label{nullthm2}
If neither $k_s$ or $k_p$ is the eigenvalue of the interior
Neumann problem for the Helmholtz equation in $D$, then  ${\rm Ker}(M)=
{\rm Ker}(M')=\{0\}$. 
\end{theorem}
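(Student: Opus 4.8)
The plan is to mirror the proof of Theorem \ref{nullthm}, but with the single-layer ansatz replaced by a double-layer (respectively mixed) ansatz and with the interior Dirichlet problem replaced by the interior Neumann problem. The argument naturally splits into showing $\mathrm{Ker}(M)=\{0\}$ and $\mathrm{Ker}(M')=\{0\}$ separately, each by the same four-move pattern: build radiating potentials from the densities, kill them in the exterior via Theorem \ref{uniqueresult}, push the vanishing into $D$ using the interior Neumann eigenvalue hypothesis, and finally read off the densities from the jump relations.

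For $\mathrm{Ker}(M)$, suppose $(\alpha,\beta)\in H^{1/2}(\Gamma)^2$ satisfies $M(\alpha,\beta)=0$, and set $\phi=\mathcal{D}_{k_p}\alpha$, $\psi=\mathcal{D}_{k_s}\beta$ on $\mathbb{R}^2\setminus\Gamma$. These solve the Helmholtz equations and the radiation conditions in the exterior, and taking exterior traces through \eqref{jr} shows that the coupled boundary data $\big(\partial_\nu\phi+\partial_\tau\psi,\ \partial_\tau\phi-\partial_\nu\psi\big)$ equals exactly $M(\alpha,\beta)=0$. Hence $(\phi,\psi)$ solves \eqref{bvp} with $f=g=0$, and Theorem \ref{uniqueresult} forces $\phi=\psi=0$ in $\mathbb{R}^2\setminus\overline D$. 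Since the normal derivative of a double-layer potential is continuous across $\Gamma$, we get $\partial_\nu\phi|_-=\partial_\nu\phi|_+=0$ and likewise for $\psi$; because neither $k_p$ nor $k_s$ is an interior Neumann eigenvalue, $\phi=\psi=0$ in $D$ as well. The double-layer jump $\phi|_+-\phi|_-=\alpha$ (and its analogue for $\psi$) then gives $\alpha=\beta=0$.

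For $\mathrm{Ker}(M')$, the dual step requires a mixed representation engineered so that the interior Neumann traces reproduce the two rows of $M'$. I would take
\[
\phi=\mathcal{D}_{k_p}\alpha-\mathcal{S}_{k_p}\partial_\tau\beta,\qquad \psi=-\mathcal{D}_{k_s}\beta-\mathcal{S}_{k_s}\partial_\tau\alpha.
\]
Using \eqref{jr}, one checks $\partial_\nu\phi|_-=T_{k_p}\alpha-(\frac12+D'_{k_p})\partial_\tau\beta$ and $\partial_\nu\psi|_-=-T_{k_s}\beta-(\frac12+D'_{k_s})\partial_\tau\alpha$, which are precisely the two components of $M'(\alpha,\beta)=0$. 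Thus the interior Neumann data vanish, and the Neumann eigenvalue hypothesis gives $\phi=\psi=0$ in $D$. Tracking the jumps across $\Gamma$ then yields the exterior Cauchy data $\phi|_+=\alpha$, $\psi|_+=-\beta$, $\partial_\nu\phi|_+=\partial_\tau\beta$, $\partial_\nu\psi|_+=\partial_\tau\alpha$, and a direct substitution shows $\partial_\nu\phi|_++\partial_\tau\psi|_+=0$ and $\partial_\tau\phi|_+-\partial_\nu\psi|_+=0$. Hence $(\phi,\psi)$ again solves \eqref{bvp} with $f=g=0$, so $\phi=\psi=0$ in the exterior by Theorem \ref{uniqueresult}, and reading off $\phi|_+=\alpha$ and $\psi|_+=-\beta$ gives $\alpha=\beta=0$.

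The jump-relation computations are routine and entirely parallel to those in Theorem \ref{nullthm}. The one genuinely delicate point, and where I would spend the most care, is fixing the signs in the mixed representation for $M'$: one must \emph{simultaneously} arrange that (i) the interior Neumann traces equal the rows of $M'(\alpha,\beta)$ and (ii) the resulting exterior Cauchy data satisfy the \emph{homogeneous} coupled boundary condition of \eqref{bvp}. The ansatz used for $A'$ in Theorem \ref{nullthm} satisfies (i) but fails (ii); the sign flip in the definition of $\psi$ above is exactly what reconciles the two requirements, and getting it right is the crux of the argument.
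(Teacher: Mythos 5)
Your proof is correct and is essentially the argument the paper intends: the paper omits the proof of Theorem \ref{nullthm2} and refers back to Theorem \ref{nullthm}, and your four steps (radiating potentials built from the densities, exterior vanishing via Theorem \ref{uniqueresult}, interior vanishing via the Neumann eigenvalue hypothesis, recovery of the densities from the jump relations) are exactly that template, with the single-layer ansatz replaced by the double-layer one for $\mathrm{Ker}(M)$ and a mixed ansatz for $\mathrm{Ker}(M')$. Your trace computations for both halves check out, including the use of the continuity of $\partial_\nu\mathcal{D}_k$ across $\Gamma$ in the $\mathrm{Ker}(M)$ argument.

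One of your observations deserves emphasis, because it is a genuine correction rather than mere caution. If one transplants the $A'$-ansatz of Theorem \ref{nullthm} verbatim, i.e.\ $\psi=\mathcal{D}_{k_s}\beta+\mathcal{S}_{k_s}\partial_\tau\alpha$, then after the interior traces vanish the exterior Cauchy data are $\phi|_+=\alpha$, $\psi|_+=\beta$, $\partial_\nu\phi|_+=\partial_\tau\beta$, $\partial_\nu\psi|_+=-\partial_\tau\alpha$, which give $\partial_\nu\phi|_++\partial_\tau\psi|_+=2\partial_\tau\beta$ and $\partial_\tau\phi|_+-\partial_\nu\psi|_+=2\partial_\tau\alpha$, not zero; this is precisely the failure of your condition (ii), and it is in fact present in the paper's own proof of Theorem \ref{nullthm}, where \eqref{iden1} as written does not literally yield homogeneous data for \eqref{bvp}. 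Your sign flip in the definition of $\psi$ --- equivalently, keeping the paper's ansatz but applying Theorem \ref{uniqueresult} to the pair $(\phi,-\psi)$ --- is exactly the repair needed, and the same repair should be understood as implicit in the omitted proof of Theorem \ref{nullthm2}.
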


Combining Theorems \ref{doublethm} and \ref{nullthm2} and the Fredholm
alternative, we finish the proof of Theorem \ref{mainthm2}.

\subsection{Proof of Theorem \ref{mainthm3}}

Following a similar proof to Theorem \ref{doublethm}, we can show the following
results for the operators $M-{\rm i}A$ and $M'-{\rm i}A'$. 

\begin{theorem}
For any vector function ${\bf f}\in H^{1/2}(\Gamma)^2$, the operators $M-{\rm
i}A$ and $N$ satisfy 
\begin{align*}
\left(N\left(M-{\rm i}A\right)\right){\bf
f}=\left(-\frac{(k_s^2+k_p^2)}{8}\overline{\mathbf{S}}_0\overline{\mathbf{S}}
_0+K_1\right){\bf f},\\
\left(\left(M-{\rm i}A\right)N\right){\bf
f}=\left(-\frac{(k_s^2+k_p^2)}{8}\overline{\mathbf{S}}_0\overline{\mathbf{S}}
_0+K_1\right){\bf f},
\end{align*}
where $K_1, K_2$ are compact operators from $H^{1/2}(\Gamma)^2$ to
$H^{5/2}(\Gamma)^2$.
\end{theorem}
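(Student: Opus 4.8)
The plan is to reduce everything to Theorem \ref{doublethm} by linearity. Since $N(M-{\rm i}A)=NM-{\rm i}NA$ and $(M-{\rm i}A)N=MN-{\rm i}AN$, and Theorem \ref{doublethm} expresses each of $NM$ and $MN$ as $-\frac{(k_s^2+k_p^2)}{8}\overline{\mathbf{S}}_0\overline{\mathbf{S}}_0$ plus a compact operator from $H^{1/2}(\Gamma)^2$ to $H^{5/2}(\Gamma)^2$, the whole statement follows once I show that the two cross terms $NA$ and $AN$ are compact from $H^{1/2}(\Gamma)^2$ into $H^{5/2}(\Gamma)^2$; the single layer part of the combined potential then merely contributes to $K_1$ and $K_2$.

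First I would peel off the leading parts of both factors. By Corollaries \ref{coro1} and \ref{coro2} one has $A=A_0+E_A$ with $A_0=\begin{bmatrix}-I/2 & H'_0\\ H'_0 & I/2\end{bmatrix}$ and $E_A$ smoothing by at least two orders (the $D'_{k}$ entries by three, the differences $H'_{k}-H'_0$ by two). For $N$, inserting the Calder\'on identity $S_kT_k=D_k^2-\frac{I}{4}$ of Lemma \ref{caldlemma} and the relation $S_k\partial_\tau=-H_k$ into its definition, a direct reduction yields $N=N_0+E_N$ with $N_0=\begin{bmatrix}-\frac14 S_0 & -\frac12 H_0S_0\\ -\frac12 H_0S_0 & \frac14 S_0\end{bmatrix}$ and $E_N$ smoothing by at least three orders. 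As $N_0$ has order $-1$ and $A$ order $0$, each mixed product $N_0E_A$, $E_NA$, $E_AN_0$, $AE_N$ maps $H^{1/2}(\Gamma)^2$ boundedly into $H^{7/2}(\Gamma)^2$ and is therefore compact into $H^{5/2}(\Gamma)^2$. Hence it remains only to analyze the two leading products $N_0A_0$ and $A_0N_0$.

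The heart of the matter is that these leading products are compact, which is where the degeneracy exploited in Theorem \ref{coercivity_thm} reappears. For $N_0A_0$, the exact identity $H_0S_0=-S_0H'_0$ (from $H_0=-S_0\partial_\tau$ and $H'_0=\partial_\tau S_0$) makes the two off-diagonal entries cancel identically, while each diagonal entry collapses, via the Cauchy relation $(H'_0)^2=(D'_0)^2-\frac{I}{4}$ of Lemma \ref{cauchyprop}, to $\frac12 S_0(D'_0)^2$; since $D'_0$ is smoothing by Corollary \ref{coro1}, this is compact into $H^{5/2}(\Gamma)$. The product $A_0N_0$ needs one further ingredient: its off-diagonal entries reduce to multiples of $(H_0+H'_0)S_0$ and its diagonal entries to $-\frac12\big(H'_0(H_0+H'_0)-(D'_0)^2\big)S_0$, so everything is governed by the smoothing of $H_0+H'_0$. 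On a smooth closed curve the kernel of $H_0+H'_0$ is $\big(\partial_{\tau(x)}+\partial_{\tau(y)}\big)\Phi_0(x,y)$, whose numerator $(\tau(x)-\tau(y))\cdot(x-y)$ vanishes to order $|x-y|^4$ (the quadratic and cubic terms drop out because $\tau\cdot\tau'=0$ on an arc-length curve), leaving a kernel of order $|x-y|^2$; by Theorem \ref{thm1} this forces $H_0+H'_0$ to smooth by at least two, whence $A_0N_0$ is also compact into $H^{5/2}(\Gamma)$.

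I expect the main obstacle to be exactly this compactness of the leading products: a naive order count gives only boundedness $H^{1/2}(\Gamma)^2\to H^{3/2}(\Gamma)^2$ and no compactness, and the two extra orders of smoothing are bought entirely by the exact cancellation of the mutually aligned, nilpotent principal symbols of $A_0$ and $N_0$ — concretely by the identities $H_0S_0=-S_0H'_0$ and $(H'_0)^2=(D'_0)^2-\frac{I}{4}$, together with the additional smoothing of $H_0+H'_0$ in the case of $A_0N_0$. Once $NA$ and $AN$ are shown compact, the two stated identities follow immediately, completing the proof.
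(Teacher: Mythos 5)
Your proposal is correct, and it supplies what the paper leaves implicit: the paper's entire ``proof'' of this theorem is the single sentence that it follows ``a similar proof to Theorem \ref{doublethm}'', i.e.\ a re-run of that theorem's entry-by-entry expansion with $M-{\rm i}A$ in place of $M$. Your route is a genuine reorganization: you invoke Theorem \ref{doublethm} as a black box for $NM$ and $MN$, reduce the statement by linearity to one new fact --- compactness of the cross terms $NA$ and $AN$ from $H^{1/2}(\Gamma)^2$ to $H^{5/2}(\Gamma)^2$ --- and prove that fact by a principal-part analysis. I checked the key steps and they hold: the Calder\'on identity $S_kT_k=D_k^2-\tfrac{I}{4}$ together with $S_k\partial_\tau=-H_k$ and Corollaries \ref{coro1}--\ref{coro2} does give $N=N_0+E_N$ with $E_N$ of order $\le -3$ and $N_0$ as you state; the exact identities $H_0S_0=-S_0H'_0$ and $(H'_0)^2=(D'_0)^2-\tfrac{I}{4}$ make $N_0A_0$ exactly diagonal with entries $\tfrac12 S_0(D'_0)^2$, hence infinitely smoothing since $D'_0$ has a $C^\infty$ kernel; and for $A_0N_0$ your reduction to the smoothing of $H_0+H'_0$, via the fourth-order vanishing of $(\tau(x)-\tau(y))\cdot(x-y)$ in arc-length parametrization, is precisely the cancellation mechanism the paper itself exploits in the proof of Theorem \ref{coercivity_thm}. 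One quibble: Theorem \ref{thm1} is stated for kernels carrying a logarithmic factor, whereas the kernel of $H_0+H'_0$ contains no logarithm at all (the log differentiates away), so it is in fact a $C^\infty$ kernel and the operator is infinitely smoothing; the correct citation is to the class-$(-\infty)$ (smooth kernel) case rather than to the log-kernel estimate, but this only strengthens your conclusion. As for what each approach buys: the paper's implied computation stays uniform with its earlier proofs but duplicates a long calculation; yours is shorter, reuses Theorem \ref{doublethm}, and isolates the one nontrivial analytic point (that $NA$ and $AN$ gain two more orders than a naive order count suggests), which the paper's expansion would have to confront anyway, merely distributed across the matrix entries.
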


\begin{theorem}
For any vector function ${\bf f}\in H^{1/2}(\Gamma)^2$, the operators
$M'-{\rm i}A'$ and $N'$ satisfy 
\begin{align*}
\left(N'\left(M'-{\rm i}A'\right)\right){\bf
f}=\left(-\frac{(k_s^2+k_p^2)}{8}\overline{\mathbf{S}}_0\overline{\mathbf{S}}
_0+K_1\right){\bf f},\\
\left(\left(M'-{\rm i}A'\right)N'\right){\bf
f}=\left(-\frac{(k_s^2+k_p^2)}{8}\overline{\mathbf{S}}_0\overline{\mathbf{S}}
_0+K_1\right){\bf f},
\end{align*}
where $K_1, K_2$ are compact operators from $H^{1/2}(\Gamma)^2$ to
$H^{5/2}(\Gamma)^2$.
\end{theorem}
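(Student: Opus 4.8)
The plan is to recognize that this statement is precisely the $L^2(\Gamma)^2$-adjoint of the theorem just established for $M-{\rm i}A$ and $N$, and to obtain it by transposing the two identities proved there rather than redoing any computation. First I would record that, with respect to the bilinear pairing $\langle{\bf u},{\bf v}\rangle=\int_{\Gamma}(u_1v_1+u_2v_2)\,ds$, the adjoint of $M-{\rm i}A$ is exactly $M'-{\rm i}A'$: since this pairing involves no complex conjugation, scalar factors pass through unchanged, so $({\rm i}A)'={\rm i}A'$ (the factor ${\rm i}$ is preserved, not flipped). Likewise $N'$ is by its very definition the adjoint of $N$. Thus the two operators in the target theorem are exactly $(M-{\rm i}A)'$ and $N'$, and the desired identities should drop out of the preceding ones by duality.

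Next I would transpose the established identities. Writing $L=-\frac{k_s^2+k_p^2}{8}\overline{\mathbf{S}}_0\overline{\mathbf{S}}_0$, the preceding theorem gives $N(M-{\rm i}A)=L+K_1$ and $(M-{\rm i}A)N=L+K_2$ with $K_1,K_2$ compact from $H^{1/2}(\Gamma)^2$ into $H^{5/2}(\Gamma)^2$. Taking adjoints and using $(PQ)'=Q'P'$ yields $(M'-{\rm i}A')N'=L'+K_1'$ and $N'(M'-{\rm i}A')=L'+K_2'$, which match the two claimed formulas after relabeling the remainders. It then remains to check that $L'=L$ and that $K_1',K_2'$ are again compact into $H^{5/2}(\Gamma)^2$. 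The first is immediate: $S_0$ is self-adjoint ($S_0=S_0'$) and the constant correction $\phi\mapsto c\int_{\Gamma}\phi\,ds$ defining $\overline{S}_0$ is symmetric in the bilinear pairing, so $\overline{S}_0$, and hence $\overline{\mathbf{S}}_0\overline{\mathbf{S}}_0$, are self-adjoint, giving $L'=L$.

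The delicate point, and the step I expect to be the main obstacle, is the mapping property of the transposed remainders: I must verify that $K_1',K_2'$ remain compact as operators $H^{1/2}(\Gamma)^2\to H^{5/2}(\Gamma)^2$, not merely on $L^2(\Gamma)^2$. This follows from Schauder's theorem (the adjoint of a compact operator is compact) together with Sobolev duality $(H^s(\Gamma))'\cong H^{-s}(\Gamma)$ under the $L^2$ pairing, once one observes that forming the $L^2$-adjoint preserves the order of a kernel operator. Indeed, by Theorem \ref{thm1} the kernels in \eqref{intequ1} and \eqref{intequ2} are transposes of one another and lie in the same class, so the adjoint of an operator that smooths by $m$ derivatives again smooths by $m$; propagating this through the products preserves the three-derivative gain underlying the $H^{1/2}\to H^{5/2}$ statement.

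Should this adjoint bookkeeping prove awkward to make fully rigorous across the Sobolev scale, an entirely parallel route avoids it: expand $N'(M'-{\rm i}A')=N'M'-{\rm i}N'A'$ and $(M'-{\rm i}A')N'=M'N'-{\rm i}A'N'$, invoke the already proved identities for $N'M'$ and $M'N'$ to extract the principal part $L$, and show directly that the cross terms $N'A'$ and $A'N'$ are compact from $H^{1/2}(\Gamma)^2$ into $H^{5/2}(\Gamma)^2$. The latter is carried out exactly as in the proof of Theorem \ref{doublethm}: split each factor $D_k,H_k$ into its Laplace part plus a smoothing remainder via Corollaries \ref{coro1} and \ref{coro2}, cancel the leading Cauchy combinations through Lemmas \ref{cauchyprop} and \ref{caldlemma}, and collect the surviving lower-order terms. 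This is longer but relies only on tools already established.
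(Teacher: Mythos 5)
Your proposal is correct, but its primary route is genuinely different from the paper's. The paper gives essentially no separate argument for this theorem: it simply says the result follows ``by a similar proof to Theorem \ref{doublethm}'', i.e., one expands $N'(M'-{\rm i}A')$ and $(M'-{\rm i}A')N'$ entrywise, applies the Calderon identities (Lemma \ref{caldlemma}), the Cauchy identities (Lemma \ref{cauchyprop}), and the mapping properties in Corollaries \ref{coro1}--\ref{coro2}, and extracts the principal part $-\frac{k_s^2+k_p^2}{8}\overline{\mathbf{S}}_0\overline{\mathbf{S}}_0$ --- which is exactly your fallback route. Your primary route instead transposes the identities already proved for $N(M-{\rm i}A)$ and $(M-{\rm i}A)N$, and you correctly isolate the two points on which this hinges: (i) since the paper's pairing $\langle{\bf u},{\bf v}\rangle=\int_\Gamma(u_1v_1+u_2v_2)\,ds$ is bilinear rather than sesquilinear, $({\rm i}A)'={\rm i}A'$ and hence $(M-{\rm i}A)'=M'-{\rm i}A'$ (with a conjugate-linear pairing the sign would flip and the whole argument would collapse); and (ii) abstract duality alone only gives that the transposed remainders are compact from $H^{-5/2}(\Gamma)^2$ to $H^{-1/2}(\Gamma)^2$, not on the stated spaces, so one must reopen the earlier proofs and check that every remainder is a finite sum of products of operators whose adjoints --- by Theorem \ref{thm1}, which treats a kernel operator and its transpose simultaneously, and by Corollaries \ref{coro1}--\ref{coro2}, which list the primed operators as well --- enjoy the same smoothing, whence the transposed remainders are again compact from $H^{1/2}(\Gamma)^2$ to $H^{5/2}(\Gamma)^2$. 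What duality buys is the elimination of the matrix algebra of Theorem \ref{doublethm}; what it costs is precisely this structural bookkeeping, so the two routes end up comparable in effort. Your explicit flagging of point (ii) --- the step a naive transposition argument would get wrong --- is what makes the proposal sound.
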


The following result concerns the uniqueness. 

\begin{theorem}
Let $\Re(k_s)>0$, $\Im(k_s)\ge 0$ and $\Re(k_p)>0$, $\Im(k_p)\ge 0$. Then
${\rm Ker}(M-{\rm i}A) = \{0\}$ and ${\rm Ker}(M'-{\rm i}A') =
\{0\}$. 
\end{theorem}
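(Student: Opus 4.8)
The plan is to prove injectivity of $M-{\rm i}A$ directly, by the classical combined-field (Brakhage--Werner type) argument built on the exterior uniqueness of Theorem \ref{uniqueresult}, and then to obtain ${\rm Ker}(M'-{\rm i}A')=\{0\}$ by duality. First I would take a density $(\alpha,\beta)\in {\rm Ker}(M-{\rm i}A)$ and form the combined layer potentials $\phi=(\mathcal{D}_{k_p}-{\rm i}\mathcal{S}_{k_p})\alpha$ and $\psi=(\mathcal{D}_{k_s}-{\rm i}\mathcal{S}_{k_s})\beta$ on $\mathbb{R}^2\setminus\Gamma$. By construction these satisfy the respective Helmholtz equations and the Sommerfeld radiation conditions in the exterior, and the exterior traces of $\partial_\nu\phi+\partial_\tau\psi$ and $\partial_\tau\phi-\partial_\nu\psi$ are exactly the two components of $(M-{\rm i}A)(\alpha,\beta)$, as in \eqref{combsys}. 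Hence $(\phi,\psi)$ solves \eqref{bvp} with $f=g=0$, and Theorem \ref{uniqueresult} forces $\phi=\psi=0$ throughout $\mathbb{R}^2\setminus\overline D$; in particular the exterior traces and exterior normal derivatives $\phi^+,\psi^+,\partial_\nu\phi^+,\partial_\nu\psi^+$ all vanish on $\Gamma$.

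The key step is to convert this into a condition on the interior traces through the jump relations \eqref{jr}. Since the normal derivative of the double-layer part is continuous while the single-layer part contributes the jump in \eqref{jr}, I get $[\phi]=\phi^+-\phi^-=\alpha$ and $[\partial_\nu\phi]=\partial_\nu\phi^+-\partial_\nu\phi^-={\rm i}\alpha$; together with $\phi^+=\partial_\nu\phi^+=0$ this yields the homogeneous impedance condition $\partial_\nu\phi^-={\rm i}\phi^-$ on $\Gamma$, and likewise $\partial_\nu\psi^-={\rm i}\psi^-$. Applying Green's first identity to $\phi$ in $D$ and using $\Delta\phi=-k_p^2\phi$ gives
\[
\int_D\big(|\nabla\phi|^2-k_p^2|\phi|^2\big)\,dx=\int_\Gamma\overline{\phi^-}\,\partial_\nu\phi^-\,ds={\rm i}\int_\Gamma|\phi^-|^2\,ds,
\]
whose imaginary part reads $-\Im(k_p^2)\int_D|\phi|^2\,dx=\int_\Gamma|\phi^-|^2\,ds$. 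Because $\Re(k_p)>0$ and $\Im(k_p)\ge0$ give $\Im(k_p^2)\ge0$, the left-hand side is nonpositive and the right-hand side nonnegative, so both vanish; thus $\phi^-=0$ and hence $\alpha=-\phi^-=0$. The same computation with $k_s$ gives $\beta=0$, so ${\rm Ker}(M-{\rm i}A)=\{0\}$.

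Finally, the regularizer identities proved just above exhibit $N$ as a two-sided regularizer of $M-{\rm i}A$ modulo the invertible operator $\overline{\mathbf{S}}_0\overline{\mathbf{S}}_0$ and compact remainders, so the Fredholm alternative applies and the operator has index zero, with adjoint $M'-{\rm i}A'$ relative to $\langle\cdot,\cdot\rangle$; triviality of ${\rm Ker}(M-{\rm i}A)$ therefore forces triviality of the cokernel, i.e.\ ${\rm Ker}(M'-{\rm i}A')=\{0\}$. (Alternatively this adjoint kernel can be handled directly by repeating the construction of Theorem \ref{nullthm}, now augmented with the combined-field term so as to produce the corresponding exterior impedance condition.) I expect the delicate point to be the sign bookkeeping in the impedance condition: the choice $\mathcal{D}-{\rm i}\mathcal{S}$, rather than $\mathcal{D}+{\rm i}\mathcal{S}$, is precisely what makes $\partial_\nu\phi^-={\rm i}\phi^-$ come out with the sign for which the imaginary part of the Green's identity is sign-definite; with the opposite sign the estimate would be vacuous for complex wavenumbers, so this is exactly where the combination constant must be matched to the radiation-condition convention.
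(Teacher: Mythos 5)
Your treatment of the first half, $\mathrm{Ker}(M-\mathrm{i}A)=\{0\}$, is correct and is essentially the paper's own argument: form the combined potentials, invoke the exterior uniqueness of Theorem \ref{uniqueresult}, read off from the jump relations \eqref{jr} that $\phi^-=-\alpha$ and $\partial_\nu\phi^-=-\mathrm{i}\alpha$ (equivalently your impedance condition $\partial_\nu\phi^-=\mathrm{i}\phi^-$), and conclude from the sign of the imaginary part of Green's identity, exactly as in \eqref{greeiden}--\eqref{greeinequ}. Your closing observation about why the combination must be $\mathcal{D}-\mathrm{i}\mathcal{S}$ rather than $\mathcal{D}+\mathrm{i}\mathcal{S}$ is also apt.

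The gap is in the second half. You deduce $\mathrm{Ker}(M'-\mathrm{i}A')=\{0\}$ from an index-zero claim, but the existence of a two-sided regularizer modulo compacts yields Fredholmness (finite-dimensional kernel, closed range of finite codimension) and \emph{not} index zero: the unilateral shift $V$ on $\ell^2$ satisfies $V^*V=I$ and $VV^*=I-P$ with $P$ of rank one, so $V^*$ is a two-sided parametrix modulo compacts, yet $\mathrm{ind}(V)=-1$. Index zero would follow if $M-\mathrm{i}A$ were a compact perturbation of an invertible operator, but that is precisely what fails in this problem: as the remark after Theorem \ref{wp} emphasizes, the principal part of these coupled systems is degenerate (its symbol determinant vanishes), which is the very reason the regularizer $N$ had to be introduced. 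Accordingly, the paper's well-posedness scheme treats the two kernel statements as independent inputs: triviality of $\mathrm{Ker}(M-\mathrm{i}A)$ gives injectivity, while triviality of $\mathrm{Ker}(M'-\mathrm{i}A')$, combined with the characterization of $\mathrm{Ran}(M-\mathrm{i}A)$ as the annihilator of $\mathrm{Ker}(M'-\mathrm{i}A')$, gives surjectivity; nothing in that framework lets you trade one for the other. The paper therefore proves the adjoint statement directly: for $(\alpha,\beta)\in\mathrm{Ker}(M'-\mathrm{i}A')$ it sets $\phi=\mathcal{D}_{k_p}\alpha-\mathcal{S}_{k_p}\partial_\tau\beta$ and $\psi=\mathcal{D}_{k_s}\beta+\mathcal{S}_{k_s}\partial_\tau\alpha$ (the same potentials as in Theorem \ref{nullthm}, not combined-field ones), observes that the kernel equations are exactly the \emph{interior} Robin conditions $\partial_\nu\phi-\mathrm{i}\phi=0$ and $\partial_\nu\psi-\mathrm{i}\psi=0$ on $\Gamma^-$, kills the interior fields by the same Green's-identity sign argument, and then uses exterior uniqueness and the jump relations to conclude $\alpha=\beta=0$. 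Your parenthetical fallback gestures toward this, but it misdescribes the construction (no combined-field term appears, and the impedance condition arises on the interior, not the exterior, side) and is not carried out; as written, the second assertion of the theorem remains unproved.
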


\begin{proof}
We first show the uniqueness of $M-{\rm i}A$. Assume
$(\alpha(x),\beta(x))\in{\rm Ker}(M-{\rm i}A)$ and Let  
\[
\phi(x) = (\mathcal{D}_{k_p}-i\mathcal{S}_{k_p})\alpha(x), \quad \psi(x)
= (\mathcal{D}_{k_s}-i\mathcal{S}_{k_s})\beta(x), \quad x\in \mathbb
R^2\setminus\Gamma.
\]
By the uniqueness result in Theorem \ref{uniqueresult} for the exterior problem,
it follows that $\phi(x)=\psi(x)=0$ in $\mathbb R^2\setminus\overline D$. By the
jump relations \eqref{jr}, we have for $x\rightarrow \Gamma$ from the
interior of $D$ that 
\begin{align*}
\phi(x)=-\alpha(x), &\quad \psi(x)=-\beta(x), \\
\partial_\nu \phi(x)=-{\rm i}\alpha(x), &\quad \partial_\nu
\psi(x)=-{\rm i}\beta(x).
\end{align*}
According to the Green's theorem \cite{K-99}, it holds
\begin{subequations}\label{greeiden}
\begin{align}
&{\rm i}\int_{\partial D}|\alpha(x)|^2ds = \int_{\partial
D}\overline{\phi}\partial_\nu \phi(x) ds = \int_{D} (|\nabla
\phi|^2-k^2_p|\phi|^2) dx,\\
&{\rm i}\int_{\partial D}|\beta(x)|^2ds = \int_{\partial D}\overline{\psi}
\partial_\nu \psi(x)ds = \int_{\partial D}(|\nabla
\psi|^2-k^2_s|\psi|^2) dx,
\end{align}
\end{subequations}
which implies
\begin{subequations}\label{greeinequ}
\begin{align}
\int_{\partial
D}|\alpha(x)|^2ds=-2\Re(k_p)\Im(k_p)\int_{D}|\phi|^2dx\le 0, \\
\int_{\partial D}|\beta(x)|^2ds=-2\Re(k_s)\Im(k_s)\int_{D}|\psi|^2dx\le
0. 
\end{align}
\end{subequations}
Therefore, we have $\phi = \psi=0$ in $D$, which leads to the
conclusion that ${\rm Ker}(M-{\rm i}A) = \{0\}$. 

Now let us assume $(\alpha(x),\beta(x))\in{\rm Ker}(M'-{\rm i}A')$. For $x\in
\mathbb{R}^2\setminus\Gamma$, define
\begin{align*}
\phi(x) & = \mathcal{D}_{k_p}\alpha-\mathcal{S}_{k_p}\partial_\tau\beta,\\
\psi(x) & = \mathcal{D}_{k_s}\beta+\mathcal{S}_{k_s}\partial_\tau \alpha.
\end{align*}
It follows from the jump relations that the normal derivatives of $\phi$ and
$\psi$ for $x\rightarrow\Gamma$ from the interior of $D$ are given by 
\begin{align*}
\partial_\nu \phi & = T_{k_p}\alpha-\Big(\frac{I}{2}+D'_{k_p}\Big)\partial_\tau
\beta, \\
\partial_\nu \psi & = T_{k_s}\beta+\Big(\frac{I}{2}+D'_{k_s}\Big)\partial_\tau
\alpha.
\end{align*} 
The assumption $(\alpha(x),\beta(x))\in{\rm Ker}(M'-{\rm i}A')$ implies
\[
\partial_\nu \phi-{\rm i}\phi = 0,\quad \partial_\nu \psi-{\rm i}\psi = 0.
\]
Following the same arguments as \eqref{greeiden} and \eqref{greeinequ} shows
that $\phi(x)$ and $\psi(x)$ are zero for $x\in \Gamma^-$. Using the same argument in
Theorem \ref{nullthm} gives
\[
\phi(x)=0,\quad \psi(x) = 0, \quad x\in \Gamma^+,
\]
Therefore, by the jump relations, we have $\alpha=\beta=0$, which
completes the proof.
\end{proof}

Combining all the above results and the Fredholm alternative, we finish the proof of Theorem \ref{mainthm3}. 

\section{Numerical method}

Multiple scattering of small particles, including mineral particles, liquid
cloud particles, and biological microorganisms, is an important research topic
in material sciences, climatology, and biomedical engineering. Classic multiple
scattering theory, which will be mentioned below, is restricted to circular
shaped particles. In practice, particles may be arbitrarily shaped and highly
disordered.  In this section, we introduce a fast numerical method for
the elastic obstacle scattering with multi-particles that are non-circular
and randomly located in a homogeneous and isotropic elastic background
medium. Numerical methods can be found in \cite{GG-JCP13, LKB-JCP15, LKG-OE14}
for the acoustic and electromagnetic scattering problems involving
multi-particles.

\subsection{Scattering of a single disk}

Consider a rigid disk located in a homogeneous medium with Lam\'e constants
given by $\lambda$ and $\mu$ and the angular frequency given by $\omega$. The
corresponding compressional wavenumber is $k_p$ and the shear wavenumber is
$k_s$. Let the disk be centered at the origin with radius $R$. Given an incident
compressional wave  $u_p^{\rm inc}$ and shear wave $u_s^{\rm
inc}$, one can expand them in terms of Bessel functions, which is also
called the local expansion:
\begin{subequations}\label{besselexpan} 
\begin{align}
u_p^{\rm inc}(r,\theta) & = \sum_{n=-\infty}^{\infty}a_nJ_n(k_p
r)e^{{\rm i}n\theta},\\
u_s^{\rm inc}(r,\theta) & = \sum_{n=-\infty}^{\infty}b_nJ_n(k_s
r)e^{{\rm i}n\theta},
\end{align}
\end{subequations} 
where $J_n$ is the Bessel function of order $n$. By the classic Mie
theory, the exterior elastic scattered compressional and shear wave fields can
be expanded by Hankel functions, which is also called the multipole expansion:
\begin{subequations}\label{hankelexpan} 
\begin{align}
u_p^{\rm s}(r,\theta) & = \sum_{n=-\infty}^{\infty}c_n H_{n}^{(1)}(k_p
r)e^{{\rm i}n\theta}, \\
u_s^{\rm s}(r,\theta) & = \sum_{n=-\infty}^{\infty}d_n H_{n}^{(1)}(k_s
r)e^{{\rm i}n\theta}, 
\end{align}
\end{subequations}      
where $H_{n}^{(1)}$ is the Hankel function of the first kind with order $n$.
Given the expansion coefficients $\{a_n\}$ and $\{b_n\}$ of the incident wave
and the boundary conditions
\begin{align*}
\partial_\nu (u_p^{\rm inc}+u_p^{\rm s})|_{r=R} + \partial_\tau
(u_s^{\rm i}+u_s^{\rm s})|_{r=R} = 0,  \\
\partial_\tau (u_p^{\rm inc}+u_p^{\rm s})|_{r=R} - \partial_\nu
(u_s^{\rm i}+u_s^{\rm s})|_{r=R} = 0, 
\end{align*}
we can easily find the expansion coefficients $\{c_n\}$ and $\{d_n\}$ of the
scattered fields by solving a $2\times 2$ linear system for each $n$:
\begin{eqnarray*}
\begin{bmatrix}
k_pH^{(1)'}_n(k_pR) & {\rm i}nH^{(1)}_n(k_sR)\\
{\rm i}nH^{(1)}_n(k_pR) & -k_sH^{(1)'}_n(k_sR)
\end{bmatrix}
\begin{bmatrix}
c_n \\
d_n
\end{bmatrix}
= 
-\begin{bmatrix}
a_n k_pJ'_n(k_pR)+{\rm i}nb_n J_n(k_sR)\\
{\rm i}n a_n J_n(k_pR)-b_n k_sJ'_n(k_sR)
\end{bmatrix}.
\end{eqnarray*}
Explicitly we have 
\[
 \begin{bmatrix}
c_n \\
d_n
\end{bmatrix}
= \mathscr S_n 
\begin{bmatrix}
a_n \\
b_n
\end{bmatrix},
\]
where
\[
 \mathscr S_n=-\begin{bmatrix}
k_pH^{(1)'}_n(k_pR) & {\rm i}nH^{(1)}_n(k_sR)\\
{\rm i}nH^{(1)}_n(k_pR) & -k_sH^{(1)'}_n(k_sR)
\end{bmatrix}^{-1}\begin{bmatrix}
k_pJ'_n(k_pR) & {\rm i}n J_n(k_sR)\\
{\rm i}n J_n(k_pR) & - k_sJ'_n(k_sR)
\end{bmatrix}.
\]

\begin{definition}
The mapping between the incoming coefficients $\{a_n\}$ and $\{b_n\}$
and outgoing coefficients $\{c_n\}$ and $\{d_n\}$ is referred to as the
scattering matrix for the disk and denoted by $\mathscr{S}$, i.e.,
\begin{eqnarray*}
\begin{bmatrix}
\{c_n\}\\
\{d_n\}
\end{bmatrix}
=\mathscr{S}
\begin{bmatrix}
\{a_n\}\\
\{b_n\}
\end{bmatrix}.
\end{eqnarray*}
\end{definition}

\subsection{Scattering of multiple disks}

Now let's consider $M$ ($M>1$) rigid disks with the same radius $R$. A global
expansion for the exterior field which is done for a single disk does not hold
anymore. However, if we assume that the disks are well separated, i.e., there
exists a positive distance between any two disks. In such a situation, the Mie
series expansion still holds in the vicinity of each disk. For the $m$-th disk,
the field around it can be expanded in terms of the Hankel functions
\eqref{hankelexpan} with expansion coefficients 
$\{c^m_n\}$ and $\{d^m_n\}$.  The incoming field has two components: the first
one is the external incident field, as the case for a single disk, and the
second one is the scattered field of the other disks. Therefore, in order to
find $\{c^m_n\}$ and $\{d^m_n\}$, we need to solve the linear system  
\begin{eqnarray}\label{mainlinear}
\begin{bmatrix}
\mathscr{S}^{-1} & \mathscr{T}^{12} & \cdots & \mathscr{T}^{1M} \\
\mathscr{T}^{21} & \mathscr{S}^{-1} & \cdots & \mathscr{T}^{2M} \\
\vdots  & \vdots & \ddots & \vdots \\
\mathscr{T}^{M1}  & \mathscr{T}^{M2} & \cdots & \mathscr{S}^{-1} \\
\end{bmatrix}
\begin{bmatrix}
\begin{bmatrix}
\{c^1_n\} \\
\{d^1_n\}
\end{bmatrix}\\[10pt]
\begin{bmatrix}
\{c^2_n\} \\
\{d^2_n\}
\end{bmatrix}\\
\vdots \\
\begin{bmatrix}
\{c^M_n\} \\
\{d^M_n\}
\end{bmatrix}
\end{bmatrix}
=
\begin{bmatrix}
\begin{bmatrix}
\{a^1_n\} \\
\{b^1_n\}
\end{bmatrix}\\[10pt]
\begin{bmatrix}
\{a^2_n\} \\
\{b^2_n\}
\end{bmatrix}\\
\vdots \\
\{a^M_n\} \\
\{b^M_n\}
\end{bmatrix},
\end{eqnarray}   
where the matrix $\mathscr T^{m l}, m=1, \dots, M, l=1, \dots, M$, which maps
the outgoing coefficients $\{c^l_n\}$ and $\{d^l_n\}$ of the $l$-th disk to the
incoming coefficients of the $m$-th disk, is constructed based on the Graf
addition theorem \cite{R-90}.  

\begin{lemma}\label{Graflemma}
Let disk $l$ be centered at $x_l$ and disk $m$ be centered at $x_m$.
Then the multipole expansion 
\[
\sum_{n=-\infty}^{\infty} \beta_n^l H^{(1)}_n(k r_l)e^{{\rm i} n\theta_l}
\]
from disk $l$ induces a field on disk $m$ of the form
\[
u = \sum_{n'=-\infty}^{\infty} \alpha^{ml}_{n'} J_{n'}(k r_m)e^{{\rm i}
n'\theta_m},
\]
where
\[
\alpha^{ml}_{n'} = \sum_{n=-\infty}^{\infty} 
e^{-{\rm i}n(\theta_{ml}-\pi)} \beta^l_{n'-n}
H^{(1)}_n(k |x_l -x_m |).
\]
Here $(r_l,\theta_l)$ and $(r_m,\theta_m)$ denote the polar coordinates
of a target point with respect to disk centers 
$x_l$ and $x_m$, respectively, and $\theta_{ml}$ denotes the angle between 
$(x_l - x_m)$ and the $x$-axis.
\end{lemma}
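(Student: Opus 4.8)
The plan is to derive the claim from the addition theorem for cylindrical wave functions and then reorganize the resulting double series. I would first record the geometry: for a target point $p$ near disk $m$, set $\vec r_l = p - x_l$ and $\vec r_m = p - x_m$, so that $\vec r_l = \vec r_m + (x_m - x_l)$. Thus $r_l$, $r_m$ and $d := |x_l - x_m|$ are the sides of a triangle, and since the disks are well separated we have $r_m < d$ throughout a neighborhood of disk $m$, which is precisely the regime in which the expansion of an outgoing mode into regular (Bessel) modes converges.

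Next I would expand a single outgoing mode. By the addition theorem for Hankel functions, for $r_m < d$ one has
\[
H^{(1)}_q(k r_l)\,e^{{\rm i}q\theta_l} = \sum_{n'=-\infty}^{\infty} H^{(1)}_{q-n'}(kd)\, e^{{\rm i}(q-n')\theta_{ml}}\, J_{n'}(k r_m)\, e^{{\rm i}n'\theta_m},
\]
which rewrites the mode, singular at $x_l$, in the regular basis $\{J_{n'}(k r_m)e^{{\rm i}n'\theta_m}\}$ about $x_m$. The displacement direction entering the translation coefficient is that of $x_l - x_m$, i.e.\ the angle $\theta_{ml}$.

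I would then substitute this into the given multipole series $\sum_q \beta^l_q H^{(1)}_q(k r_l)e^{{\rm i}q\theta_l}$, interchange the two summations, and read off the coefficient of $J_{n'}(k r_m)e^{{\rm i}n'\theta_m}$. The absolute convergence of the addition theorem for $r_m<d$, uniform on compact subsets, justifies the interchange. Reindexing by $n = n' - q$ (so $q = n'-n$ and the Hankel order becomes $-n$) gives $\alpha^{ml}_{n'} = \sum_n \beta^l_{n'-n} H^{(1)}_{-n}(kd)\, e^{-{\rm i}n\theta_{ml}}$; applying the identity $H^{(1)}_{-n}(kd) = (-1)^n H^{(1)}_n(kd)$ and absorbing the factor $(-1)^n = e^{{\rm i}n\pi}$ into the exponential produces exactly $\alpha^{ml}_{n'} = \sum_n e^{-{\rm i}n(\theta_{ml}-\pi)} \beta^l_{n'-n} H^{(1)}_n(kd)$.

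The main obstacle is the angular and index bookkeeping rather than any analytic difficulty. In particular, the $-\pi$ shift in the phase is not geometric but algebraic: it emerges from rewriting the negative-order Hankel function $H^{(1)}_{-n}$ in terms of $H^{(1)}_n$ after the reindexing, and one must also fix the orientation convention for $\theta_{ml}$ (the direction of $x_l - x_m$ versus $x_m - x_l$) consistently with the addition theorem to avoid an erroneous sign. Once the single-mode translation is pinned down with the correct conventions, the remainder is a routine rearrangement.
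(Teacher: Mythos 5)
Your strategy is the right one --- in fact the paper offers no proof of this lemma at all (it simply cites Rokhlin's paper), and the intended argument is exactly what you sketch: expand each outgoing mode about $x_m$ via Graf's addition theorem, interchange the two sums in the well-separated regime, reindex, and invoke $H^{(1)}_{-n}=(-1)^nH^{(1)}_n$. The gap is not structural but a genuine sign error in the orientation convention, at precisely the spot you flagged as dangerous. The single-mode identity you start from,
\[
H^{(1)}_q(k r_l)e^{{\rm i}q\theta_l}=\sum_{n'}H^{(1)}_{q-n'}(kd)\,e^{{\rm i}(q-n')\theta_{ml}}\,J_{n'}(kr_m)\,e^{{\rm i}n'\theta_m},\qquad d=|x_l-x_m|,
\]
is false when $\theta_{ml}=\arg(x_l-x_m)$, which is what you assert (``the displacement direction \dots is that of $x_l-x_m$''); the correct Graf expansion carries the angle of $x_m-x_l$, i.e.\ the direction from the multipole center toward the new local center. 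A two-line check: take $q=0$, $x_l=(0,0)$, $x_m=(d,0)$, so $\arg(x_l-x_m)=\pi$. Writing $p=x_m+\epsilon(\cos\theta_m,\sin\theta_m)$ and Taylor expanding $H^{(1)}_0(k|p|)\approx H^{(1)}_0(kd)-k\epsilon\cos\theta_m\,H^{(1)}_1(kd)$ shows that the true coefficient of $J_1(kr_m)e^{{\rm i}\theta_m}$ is $-H^{(1)}_1(kd)$, whereas your identity yields $H^{(1)}_{-1}(kd)e^{-{\rm i}\pi}=+H^{(1)}_1(kd)$.

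Because your subsequent algebra (reindexing plus $H^{(1)}_{-n}=(-1)^nH^{(1)}_n$) is carried out correctly, this sign propagates: your final formula differs from the true translation rule by the factor $(-1)^n$. It coincides with the lemma as printed only because the printed statement, read literally with $\theta_{ml}=\arg(x_l-x_m)$, carries the same spurious factor; the lemma is actually correct precisely when $\theta_{ml}$ is understood as the angle of $x_m-x_l$ (from disk $l$ to disk $m$), which is the convention under which the translation matrix $\mathscr{T}^{ml}$ is the right one. Correspondingly, your remark that the $-\pi$ shift is ``not geometric but algebraic'' is backwards, and is exactly where the sign was lost: the algebraic factor $(-1)^n$ from $H^{(1)}_{-n}=(-1)^nH^{(1)}_n$ implements the geometric reversal between $\arg(x_m-x_l)$ and $\arg(x_l-x_m)$, so the two effects must be counted once, not twice. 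Running your own computation from the corrected starting identity gives
\[
\alpha^{ml}_{n'}=\sum_{n}e^{-{\rm i}n(\psi-\pi)}\beta^l_{n'-n}H^{(1)}_n(kd),\qquad \psi=\arg(x_m-x_l),
\]
which is the stated formula under the consistent reading of $\theta_{ml}$. To repair the write-up, replace $\theta_{ml}$ by $\arg(x_m-x_l)$ in your single-mode expansion; also note that the interchange of sums needs absolute convergence on all of disk $B_m$, which the hypothesis of disjoint circumscribing disks provides since then $r_m\le R_m<d$ uniformly for targets near particle $m$.
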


From Lemma \ref{Graflemma}, we see that the translation matrix $\mathscr T^{m l}$ has the form 
\begin{eqnarray*}
\mathscr T^{m l}=
\begin{bmatrix}
\big\{H^{(1)}_{i-j}(k_p |x_l -x_m |)e^{-{\rm
i}(i-j)(\theta_{ml}-\pi)}\big\}_{i,j\in \mathbb{Z}} & 0 \\
0& \big\{ H^{(1)}_{i-j}(k_s |x_l -x_m |)e^{-{\rm
i}(i-j)(\theta_{ml}-\pi)}\big\}_{i,j\in \mathbb{Z}}
\end{bmatrix}.
\end{eqnarray*}
Since the scattering matrix $\mathscr{S}$ is ill-conditioned, it introduces
large
numerical errors if inverting $\mathscr{S}$ directly. A better way to
solve the linear system \eqref{mainlinear} is to first introduce a block
diagonal matrix, whose diagonal blocks are the scattering matrix  $\mathscr{S}$,
as the preconditioner of \eqref{mainlinear}. Hence, instead of solving
\eqref{mainlinear}, we solve the preconditioned linear system
\begin{eqnarray}\label{precondmainlinear}
\begin{bmatrix}
I & \mathscr{S}\mathscr{T}^{12} & \cdots & \mathscr{S}\mathscr{T}^{1M} \\
\mathscr{S}\mathscr{T}^{21} & I & \cdots & \mathscr{S}\mathscr{T}^{2M} \\
\vdots  & \vdots & \ddots & \vdots \\
\mathscr{S}\mathscr{T}^{M1}  & \mathscr{S}\mathscr{T}^{M2} & \cdots & I \\
\end{bmatrix}
\begin{bmatrix}
\begin{bmatrix}
\{c^1_n\} \\
\{d^1_n\}
\end{bmatrix}\\[10pt]
\begin{bmatrix}
\{c^2_n\} \\
\{d^2_n\}
\end{bmatrix}\\
\vdots \\
\begin{bmatrix}
\{c^M_n\} \\
\{d^M_n\}
\end{bmatrix}
\end{bmatrix}
=
\begin{bmatrix}
\mathscr{S}\begin{bmatrix}
\{a^1_n\} \\
\{b^1_n\}
\end{bmatrix} \\[10pt]
\mathscr{S}\begin{bmatrix}
\{a^2_n\} \\
\{b^2_n\}
\end{bmatrix} \\
\vdots \\
\mathscr{S}\begin{bmatrix}
\{a^M_n\} \\
\{b^M_n\}
\end{bmatrix} \\
\end{bmatrix}.
\end{eqnarray}  
Due to the existence of positive distance between any two disks, the translation
operators $\mathscr{T}^{ml}, m=1, \dots, M, l=1, \dots, M$ are compact
and the scattering matrix $\mathscr{S}$ is bounded, which implies  the
system \eqref{precondmainlinear} is much better conditioned than the
original system \eqref{mainlinear}. Therefore, one can apply an iterative
solver, such as GMRES, to the system \eqref{precondmainlinear} and expect a
fast convergence rate. For numerical purpose, all the infinite series $\{a_n\}$,
$\{b_n\}$, $\{c_n\}$, and $\{d_n\}$ need to be truncated to a finite number of terms with
$N>0$. Since the linear matrix in \eqref{precondmainlinear} is dense, direct
matrix-vector product in each iteration takes a computational complexity on the
order of $O(M^2)$ if the truncation number $N$ is relatively small. In this
case, the FMM can be utilized to reduce the complexity to $O(M)$
in each iteration and greatly accelerate the computation \cite{GR-JCP87} .

\subsection{Scattering of arbitrarily shaped
multiple obstacles}\label{arbparticle}

The theory described above for the elastic scattering of multiple disks is based
on the classic acoustic multiple scattering theory, which is efficient to find
the scattering field for a large number of disks. It is not easy, however, to
extend to the scattering of non-circular shaped particles. Here, we propose a
fast algorithm for the scattering of a large number of arbitrarily shaped
multi-particles, which are assumed to be well separated in the sense that each
particle is included in a disk, and all the disks do not overlap. Given  such an
assumption, we construct the scattering matrix $\mathscr{S}$ for each
particle based on the disk that includes the particle and extend the multiple
scattering theory to non-circular particles. 

More specifically, given $M$ randomly located particles $D_j$,
$j=1, \dots, M$, each particle is included by a non-overlapping disk $B_j$,
$j=1,\dots, M$. We sample the incoming field on the disk $B_j$ rather than
$D_j$ in the form of \eqref{besselexpan}. In particular, for each
$n\in\{-N,\dots,N\}$, let $\alpha_n$ and $\beta_n$ denote the solution to the 
integral equation \eqref{mainint} with right-hand side given by
\eqref{besselexpan}, where we choose $a_n = 1$ or $b_n=1$ with $n$ sequentially
being $-N,-N+1,\cdots,N-1,N$ and $a_m = 0$ or $b_m=0$ for any $-N\le m\le N$ and
$m\ne n$. Then we precompute the multipole expansion
\eqref{hankelexpan} 
from these source distributions, where
\begin{align*}
c^n_l & = \int_{\Gamma_j} 
J_l(k_p|y|) e^{- {\rm i} l \theta_j(y)} \, 
\alpha_n(y)ds(y), \\
d^n_l & = \int_{\Gamma_j} 
J_l(k_s|y|) e^{- {\rm i} l \theta_j(y)} \, 
\beta_n(y)ds(y),
\end{align*}
$l = -N, \dots, N$. Here, $y$ is the location of a point on 
$\Gamma_j$ with respect to the center of the disk $B_j$ and
$\theta_j(y)$ is the polar angle subtended with respect to
the center of disk $B_j$. The formulas for $c^n_l$ and $d^n_l$ are standard 
\cite{R-90} and derived from Graf's addition theorem
\cite{OLBC-10}. Note that we only have to solve the integral equation
\eqref{mainint} by the LU factorization or any other direct solver once and
apply it to different hand sides.  Once the computation is done for each
$n\in\{-N,\dots, N\}$, we obtain the scattering matrix $\mathscr{S}_j$, which is given by the following definition.   

\begin{definition}
The mapping between the incoming coefficients $\{a_n\}$ and $\{b_n\}$
and outgoing coefficients $\{c_n\}$ and $\{d_n\}$ is referred to as the
scattering matrix for the inclusion $D_j$ and denoted by $\mathscr{S}_j$.
\end{definition}

\begin{remark}
Here we construct the scattering matrix for a given particle based on
the integral formulation \eqref{mainint} under the assumption of Theorem
\ref{nullthm}. If this assumption is not satisfied, we can use either
\eqref{mainint2} or \eqref{combsys} to find the scattering matrix.
\end{remark}

When the scattering matrix $\mathscr{S}_j$ for each particle is available, we
plug them into the linear system \eqref{precondmainlinear} to find the
elastic scattered field. The advantage of using scattering matrix, instead of
points that discretize each particle directly, is that the number of unknowns
represented by multipole expansion coefficients is usually much less than the
one represented by points, especially for particles with complicated geometries.
Moreover, by using the scattering matrix, we obtain a much better conditioned
system and GMRES can find the solution rapidly. In addition, if all the
particles are identical up to a rotation, we only have to compute the scattering
matrix $\mathscr{S}$ for one particle and apply it to all the other particles. 

\section{Numerical experiments}

In this section, we test our algorithm by evaluating the elastic scattered field
for a large number of identical particles embedded in a homogeneous and
isotropic background. Particles tested in all the examples, up to a rotation and
shift,  are parametrized by 
\begin{align*}
x(\theta)  = (a+b\cos(c\theta))\cos\theta,\quad 
y(\theta)  = (a+b\cos(c\theta))\sin\theta,
\end{align*}
where $\theta\in[0,2\pi)$ and the parameters $a, b, c$ will be specified in each
example. For simplicity, we fix the Lam\'e constants to be $\lambda= 3.88$ and
$\mu = 2.56$ in all examples and change the angular frequency $\omega$ only. In
order to discretize the singular
integral accurately, we use the Nystr\"{o}m discretization for the system of
equations \eqref{mainint} based on the high order hybrid Gauss-trapezoidal rule
of Alpert \cite{A-SISC99}. 

The following notations are given in the table to illustrate the results:
\begin{itemize}
\item $\omega$: the angular frequency,
\item $N_{pts}$: the number of points to discretize a single particle,
\item $N_{particle}$: the total number of particles,
\item $N_{term}$:  the highest order that is used in the local and multipole
expansion of a single particle, i.e., the local and multipole
expansion have $2N_{term}+1$ terms.
\item $N_{tot}$: the total number of unknowns in the linear equation. If the
unknowns are given by points, then it is equal to $2N_{pts}N_{particle}$. If the
unknowns are given by the coefficients of multipole expansion, then it is equal
to $2(2N_{term}+1)N_{particle}$,
\item $N_{iter}$: the number of GMRES iterations,
\item $T_{solve}$: the time  (secs.) to solve the linear system by GMRES,
\item $E_{error}$: the relative $L^2$ error of the elastic field measured at a
few random points.
\end{itemize}

All experiments were implemented in \textsc{Fortran 90} and
carried out on a laptop with an Intel CPU and 16 GB of memory. We made
use of the simple LU-factorization for matrix inversion when constructing the
scattering matrix given in Section \ref{arbparticle}. The accuracy for GMRES
was chosen to be 1E-9. No further acceleration was explored
during the GMRES iteration except for using the FMM. 

\subsection{Example 1: scattering with analytic solution}

In this example, we consider the elastic scattering of $10$ particles, denoted
by $D_i$, $i = 1, \dots, 10$.  Two methods are used for comparison. One
method is to discretize all particles by points and apply the Nystr\"om
discretization to the integral equation directly. Since the number of unknowns
is large, we do not explicitly assemble the matrix but solve it by the GMRES
with the FMM acceleration. We call it a direct method. Another one is the
proposed method by constructing scattering matrix first and solve for the
coefficients of multipole expansion, which is called the scattering matrix
based method. To verify the accuracy of these two methods, we construct an
artificial solution by letting the field outside the particles be generated by a
point source inside the first particle. In particular, we choose the exterior
elastic field to be 
\[
{\bf u} = \nabla u_p^s + {\bf curl}u_s^s=
\begin{bmatrix}
\partial_{x_1} u_p^s\\
 \partial_{x_2} u_p^s
\end{bmatrix}
 +\begin{bmatrix}
\partial_{x_2} u_s^s\\
-\partial_{x_1} u_s^s
\end{bmatrix},
\]
where 
\begin{equation}\label{pointsource}
u_p^s = H^{(1)}_0(k_p|x-x_0|), \quad
u_s^s = H^{(1)}_0(k_s|x-x_0|),\quad x\in
\mathbb{R}^2\setminus\cup_{j=1}^{10}D_j,
\end{equation}
where $x_0\in D_1$. Due to the uniqueness property, the solution can
be recovered by enforcing a boundary condition on $\Gamma_j, j
=1, \dots, 10$, that is consistent with the given ${\bf u}$. 

 Results for various angular frequencies are shown in Figure \ref{figure_ex1}
and Tables \ref{tab11}--\ref{tab12}. Table \ref{tab11} shows the results of
the direct method. It can be seen that the number of iterations grows
rapidly when the number of discretization points increases. Since the integral
equation \eqref{mainint} is not second kind in $L^2$ space, convergence rate
based on the direct method is very
slow due to the ill-conditioning of the matrix. As shown in Table \ref{tab11},
if each particle is discretized by $200$ points, more than 1000 iterations  are
required for the GMRES to converge in all cases. Even with the FMM acceleration,
the CPU time is on the order of hundred seconds, which suggests that the direct
matrix factorization may be more efficient than the iterative method in this
case.  On the other hand, our scattering matrix based method always achieves a
quick convergence in various cases. One important feature is that the
convergence rate is almost constant for different $N_{term}$ used in the
multipole expansion if all the other factors are unchanged. If we ignore the
cost for precomputation of the scattering matrix, which is constructed by
solving the integral equation \eqref{mainint} with $200$ discretization points,
our solver is more than 1000 times faster than the direct method for the same
accuracy. 
 
Figure \ref{figure_ex1} shows the error of the computed elastic field compared
with the analytic solution when $\omega = 4\pi$ by the scattering matrix based
method. The near field is evaluated by the QBX \cite{KB-JCP13}, i.e., the field
near the boundary of each particle is evaluated by the use of local expansions
formed by the FMM. Overall the error is less than 1E-7. We also show the
comparison of convergence rate between the direct method and our method in
Figure \ref{figure_ex1}(c), \ref{figure_ex1}(d). Obviously, one can see a much
faster convergence for the scattering matrix based method.
  
\begin{figure}[t]
\centering
\includegraphics[width=0.45\linewidth]{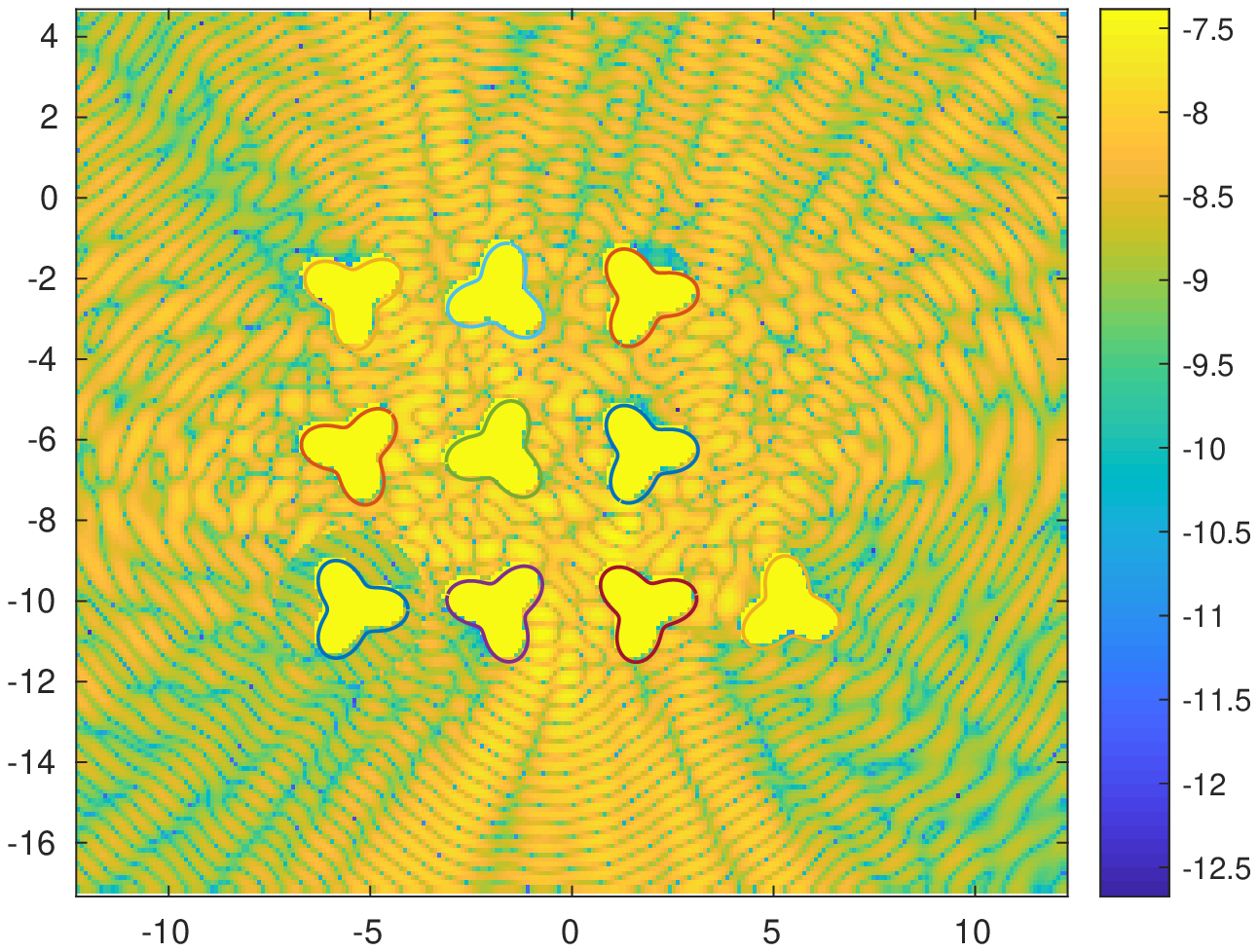}
\includegraphics[width=0.45\linewidth]{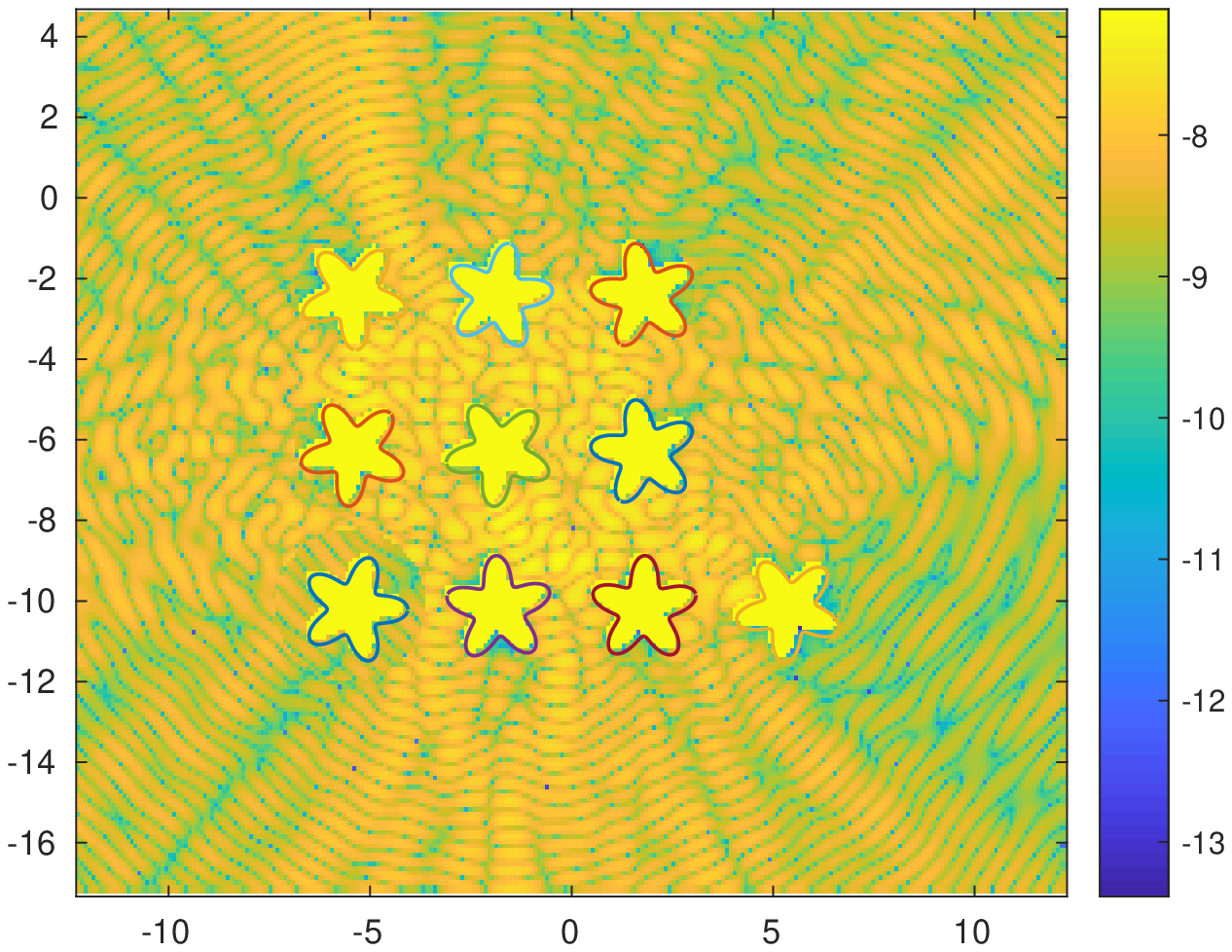}\\
(a) \hspace{7cm} (b)\\
\includegraphics[width=0.45\linewidth]{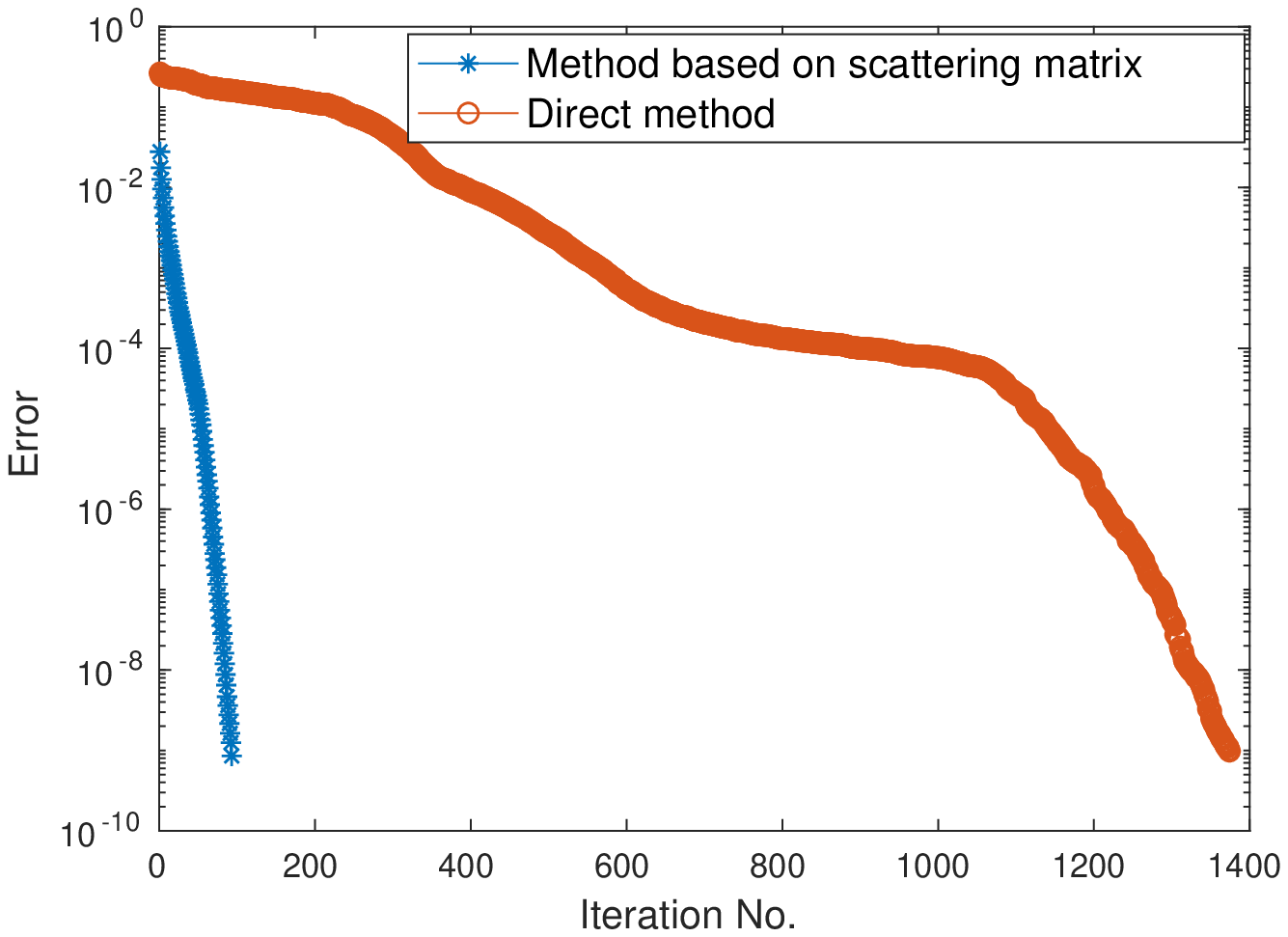}   
\includegraphics[width=0.45\linewidth]{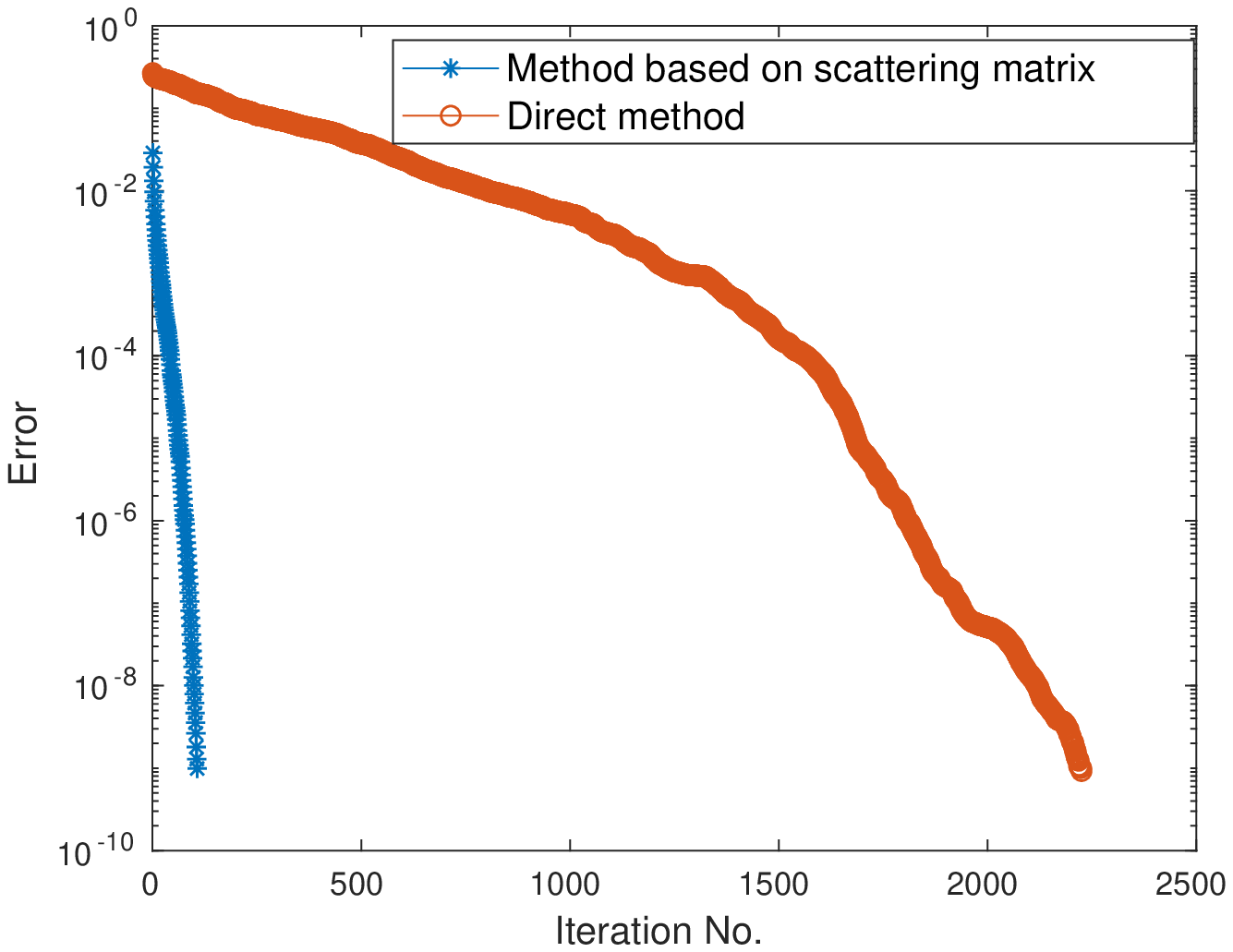}\\
(c) \hspace{7cm} (d)\\
\caption{Elastic scattering of 10 particles at $\omega=4\pi$. (a) The logarithmic error of the computed field when $a=1$, $b=1/3$,
$c=3$. (b) The logarithmic error of the computed field when $a=1$, $b=1/3$, 
$c=5$. (c) Comparison of the GMRES convergence rate when $a=1$, $b=1/3$, $c=3$.
(d) Comparison of the GMRES convergence rate when $a=1$, $b=1/3$, $c=5$. More
details are given in the text of Example 1.} \label{figure_ex1}
\end{figure}

\begin{table}
\begin{center}
\begin{tabular}{ c| c c| c c c | c c c}
\hline
&  &  & $a = 1$, &  $b = \frac{1}{3}$, & $c = 3$  & $a = 1$, &  $b
= \frac{1}{3}$, & $c = 5$   \\
\hline
$\omega$ & $N_{pts}$ & $N_{tot}$ &$N_{iter}$ & 
$T_{solve}$ & $E_{err}$  &$N_{iter}$ &  $T_{solve}$ & $E_{err}$   \\
\hline
& 50 & 1000 & 930 & 7.96E1 & 5.79E-4 & 943
& 7.84E1 & 2.12E-2  \\
$\pi$ & 100 & 2000 & 1071 & 1.79E2 &  1.36E-6&
1613 & 2.75E2 & 2.17E-7 \\
 & 200 & 4000 & 2243 & 7.23E2 &  4.47E-10 &
2814 & 1.01E3 & 2.56E-9\\
\hline
& 50 & 1000 & 930 & 7.92E1 & 5.79E-4   &
933 & 8.02E1 & 2.75E-3\\
$2\pi$ & 100 & 2000 & 1410 & 2.43E2 &  8.08E-7 &
1588 & 2.78E2 & 2.63E-5\\
 & 200 & 4000 & 2243 & 7.26E2 &  4.47E-10
& 2327 & 8.15E2 & 7.94E-9\\
\hline
& 50 & 1000 & 847 & 7.42E1 & 2.12E-3   &
993 & 8.92E1 & 1.72E-2\\
$4\pi$ & 100 & 2000 & 939 & 1.61E2 &  1.04E-6 &
1670 & 3.01E2 & 5.27E-5\\
& 200 & 4000 & 1374 & 4.29E2 &  8.11E-10
& 2225 & 8.22E2 & 2.47E-8\\
\hline
\end{tabular}
\end{center}
\caption{Example 1: Results for the elastic scattering of 10 particles based on
the direct method with FMM acceleration.} 	
\label{tab11}
\end{table}	

\begin{table}
\begin{center}
\begin{tabular}{ c| c c| c c c | c c c}
\hline
&  &  & $a = 1$, &  $b = \frac{1}{3}$, & $c = 3$  & $a =
1$, &  $b = \frac{1}{3}$, & $c = 5$   \\
\hline
$\omega$ & $N_{term}$ & $N_{tot}$
&$N_{iter}$ &  $T_{solve}$ & $E_{err}$  &$N_{iter}$ &  $T_{solve}$ & $E_{err}$  
\\
\hline
& 10 & 420 & 63 & 4.01E-2 & 8.35E-6 & 67
& 4.02E-2 & 6.59E-6  \\
$\pi$ & 20 & 820 & 62 & 8.81E-2 & 
4.71E-9 & 67 & 9.61E-2 & 6.17E-9 \\
& 40 & 1620 & 61 & 2.39E-1 &  5.75E-9 &
67 & 2.72E-1 & 6.84E-10\\
\hline
& 10 & 420 & 74 & 4.81E-2 & 3.39E-5   &
88 & 6.01E-2 & 8.41E-5\\
$2\pi$ & 20 & 820 & 73 & 1.07E-1 & 
6.49E-9 & 87 & 1.24E-1 & 5.32E-9\\
& 40 & 1620 & 73 & 2.91E-1 &  2.09E-9 &
87 & 3.61E-1 & 2.31E-9\\
\hline
& 10 & 420 & 94 & 6.39E-2 & 1.84E-2   &
104 & 7.21E-2 & 1.70E-2\\
$4\pi$ & 20 & 820 & 94 & 1.41E-1 & 
7.85E-7 & 108 & 1.64E-1 & 1.72E-7\\
& 40 & 1620 & 93 & 3.84E-1 &  3.42E-9 &
107 & 4.47E-1 & 1.12E-8\\
\hline
\end{tabular}
\end{center}
\caption{Example 1: Results for the elastic scattering of 10 particles by using
the scattering matrix based method with FMM acceleration.} 
\label{tab12}
\end{table}

\subsection{Example 2: point source incidence}

In this example, we test our algorithm on a large number of rigid particles with
point source incidence. The point source is given by the form of equation
\eqref{pointsource} with $x_0 = (5,5)$ and all the particles are
randomly located in the lower half plane. To ensure that the particles
are well separated but confined in a fixed region, we use a bin sorting
algorithm to construct the random distribution, i.e., we begin with
particles located on a regular grid and then perturb their positions randomly
several times. The details can be found in \cite{LKG-OE14}. 

We construct the scattering matrix by solving the integral equation
\eqref{mainint} on a single particle with 200 discretization points. The number
of terms in the multipole expansion is chosen to be $N_{term} = 20$. To verify
the accuracy of the computed solution, we compare it with the solution obtained
by choosing $N_{term} = 40$. Numerical results for various angular frequencies
are shown in Table \ref{tab13} and Figure \ref{figure_ex2}. From Table
\ref{tab13}, we can see that the number of iterations grows roughly linearly
with respect to the angular frequency $\omega$ for a fixed number of particles.
If $\omega$ is fixed, the number of iterations increases sublinearly with
respect to the number of particles. Another observation is that the field is
mainly affected by the size of a particle, not by the detailed geometry, since
the number of iterations is almost constant when we change the value of $c$,
which controls how many `leaves' that a particle has. The total field plotted
in Figure \ref{figure_ex2} for scattering of $1000$ particles also confirms this
observation. We have to note, however, that this conclusion may only hold when
the size of each particle is in subwavelength regime for a given incident
field.

\begin{figure}[t]
\centering 
\includegraphics[width=0.45\linewidth]{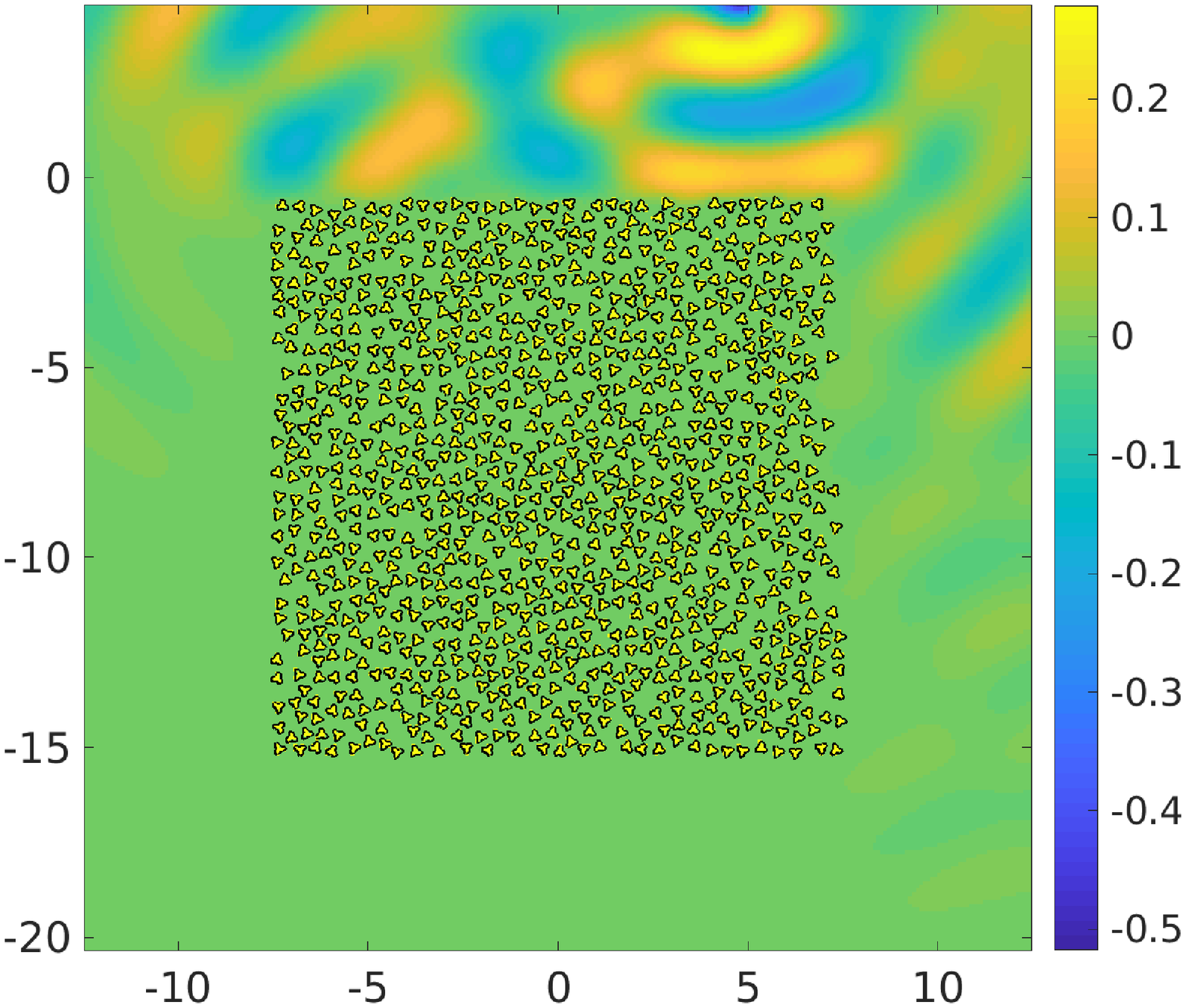}
\includegraphics[width=0.45\linewidth]{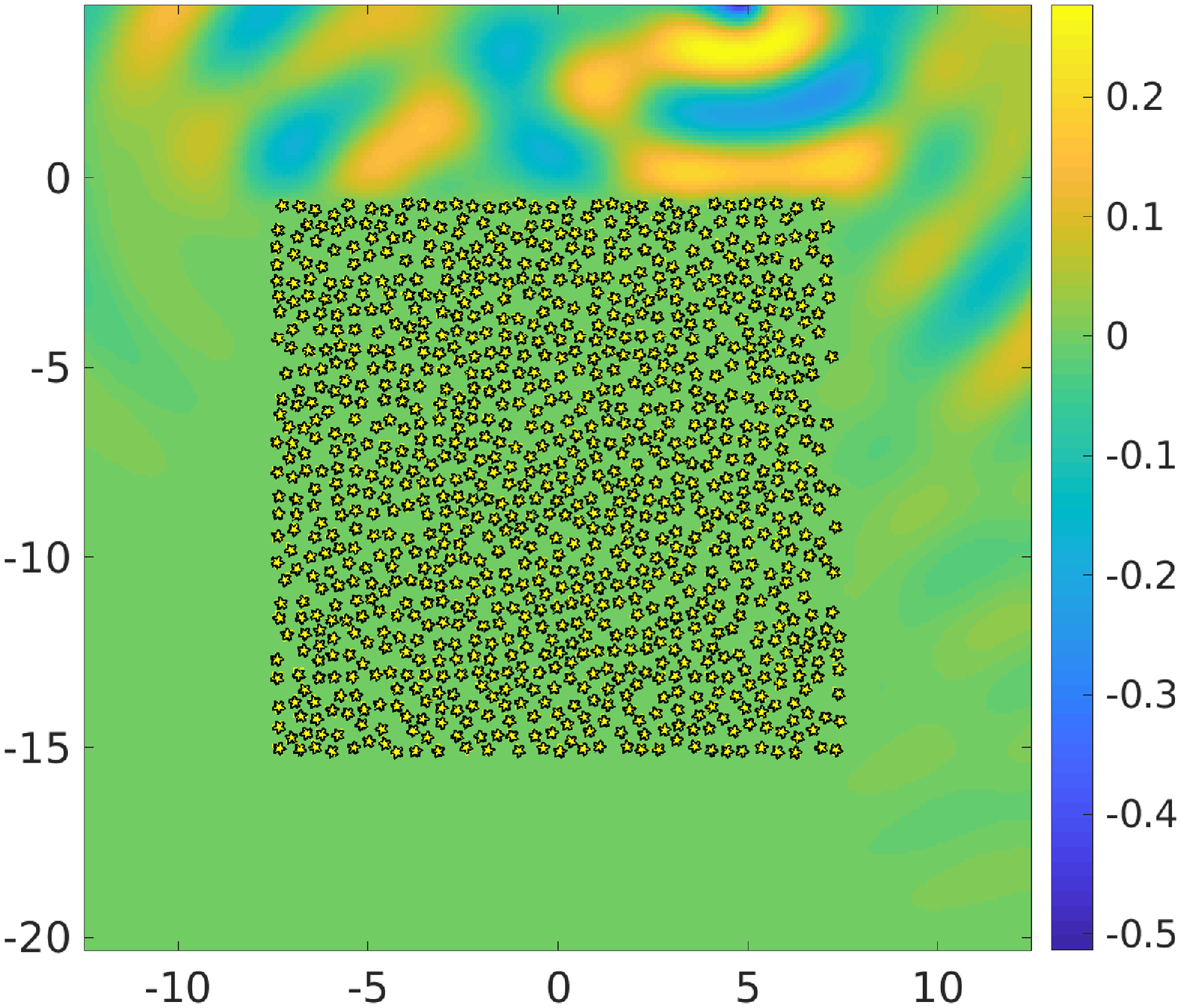}\\
(a) \hspace{7cm} (b)\\
\includegraphics[width=0.45\linewidth]{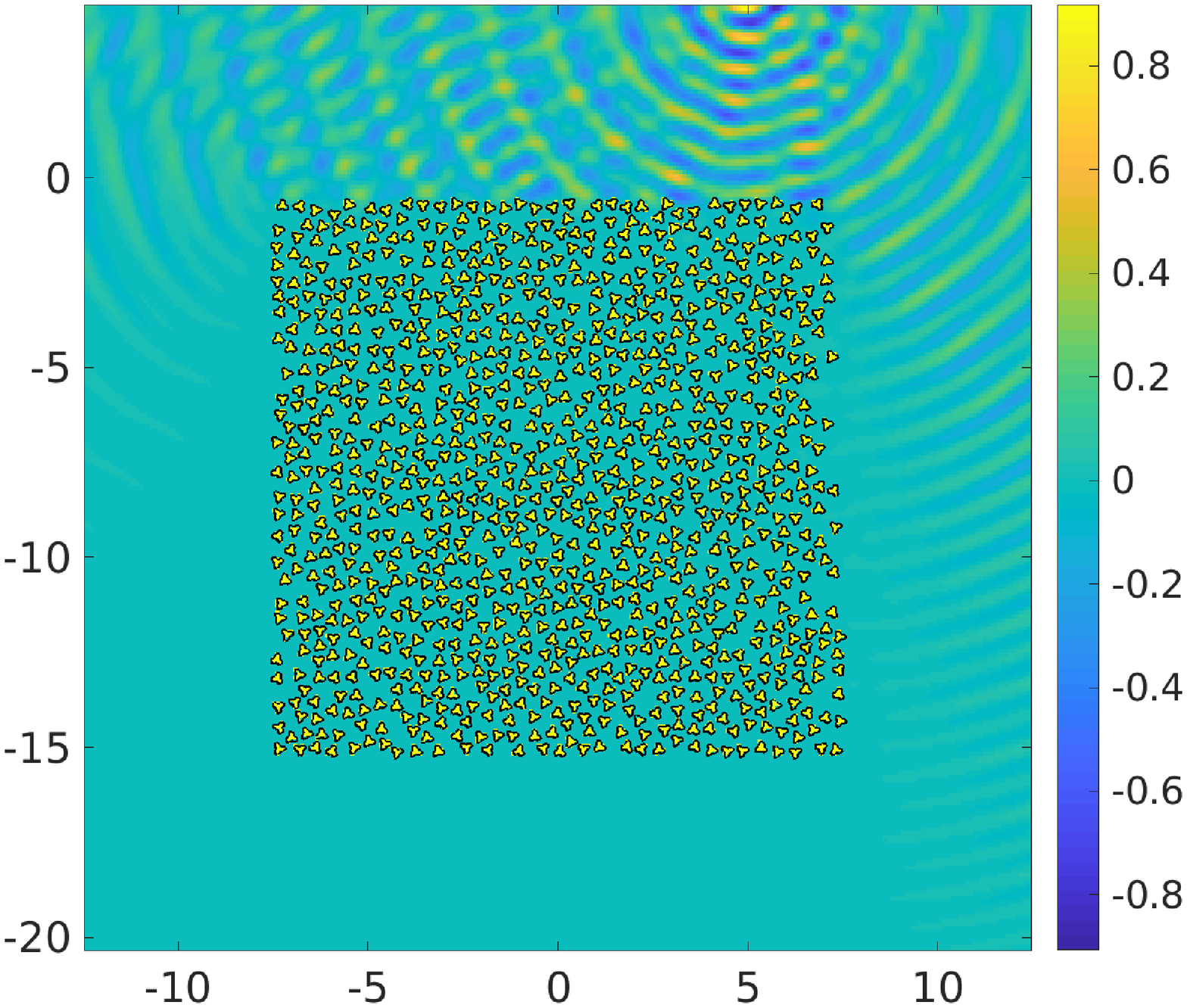}   
\includegraphics[width=0.45\linewidth]{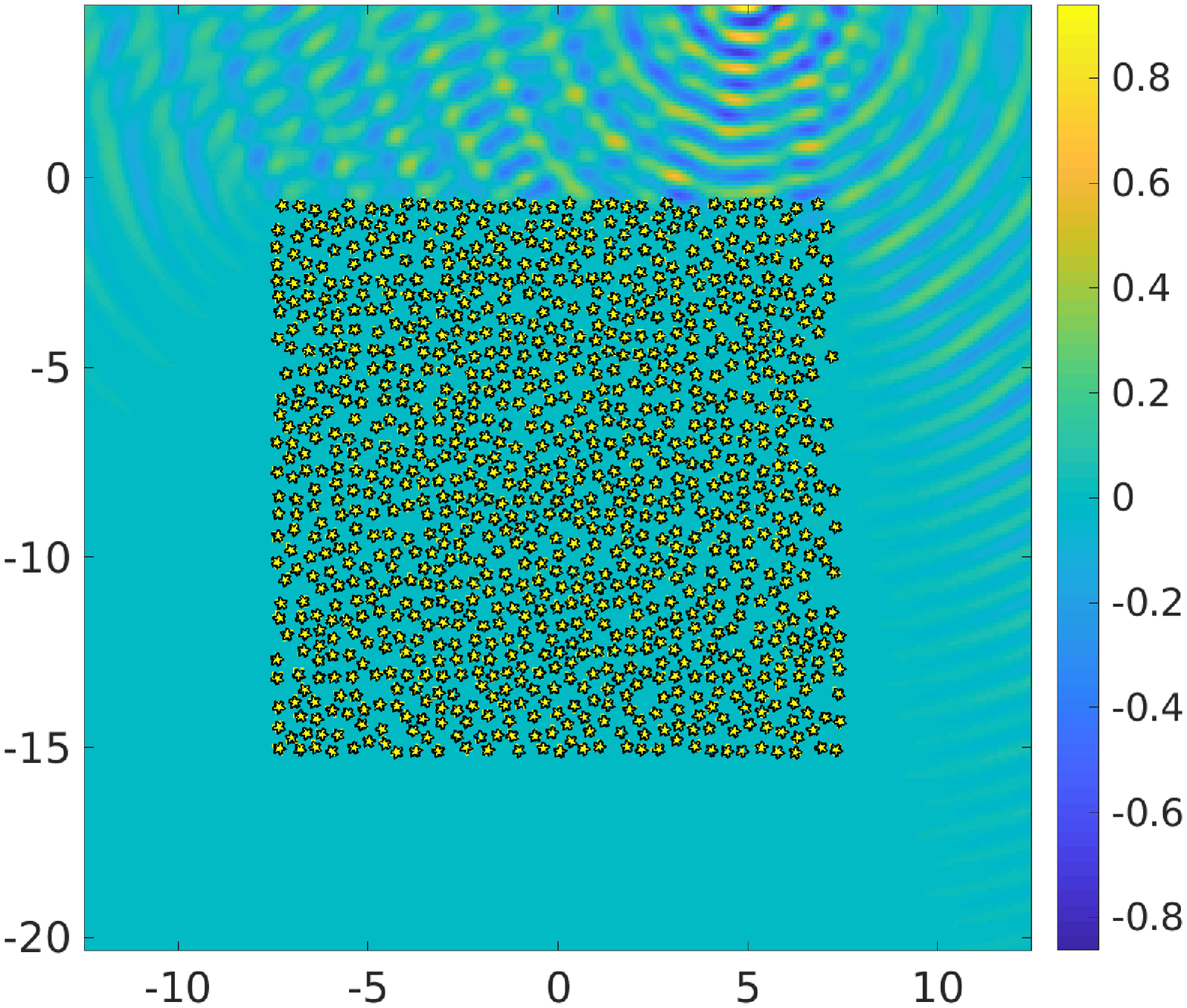}\\
(c) \hspace{7cm} (d)
\caption{The elastic scattering of 1000 particles by point source
illumination in Example 2. Here we show the real part of the first component of
the total elastic field. (a) Field for $a=\frac{1}{8}$, $b=\frac{1}{24}$,
$c=3$, $\omega=\pi$. (b) Field for $a=\frac{1}{8}$, $b=\frac{1}{24}$,
$c=5$, $\omega=\pi$. (c) Field for $a=\frac{1}{8}$, $b=\frac{1}{24}$, 
$c=3$, $\omega=4\pi$. (d) Field for $a=\frac{1}{8}$, $b=\frac{1}{24}$, 
$c=5$, $\omega=4\pi$. } \label{figure_ex2}
\end{figure}

\begin{table}
\begin{center}
\begin{tabular}{ c| c c| c c c | c c c}
\hline
&  &  & $a = \frac{1}{8}$, &  $b =\frac{1}{24}$, & $c = 3$  & $a =\frac{1}{8}
$, & $b = \frac{1}{24}$, & $c = 5$   \\
\hline
$\omega$ & $N_{particle}$ & $N_{tot}$ &$N_{iter}$ & 
$T_{solve}$ & $E_{err}$  &$N_{iter}$ &  $T_{solve}$ & $E_{err}$   \\
\hline
& 100 & 8400 & 57 & 3.51E0 & 1.38E-9 & 59 & 3.41E0 &
1.62E-10  \\
$\pi$ & 500 & 42000 & 130 & 5.28E1 &  9.39E-10 & 131 &
5.21E1 & 2.21E-9 \\
& 1000 & 82000 & 210 & 1.34E2 &  9.62E-10 & 212 & 1.36E2
& 2.81E-9\\
\hline
& 100 & 8400 & 96 & 6.09E0 & 1.64E-9   & 97 & 5.86E0 &
4.24E-9\\
$2\pi$ & 500 & 42000 & 249 & 1.05E2 &  4.10E-9 & 251 &
1.06E2 & 3.71E-9\\
& 1000 & 84000 & 347 & 2.48E2 &  1.03E-9 & 353 & 2.53E2
& 3.34E-9\\
\hline
& 100 & 8400 & 271 & 1.79E1 & 4.22E-9   & 255 & 1.65E1 &
6.31E-9\\
$4\pi$ & 500 & 42000 & 614 & 3.09E2 &  1.63E-9 & 667 &
3.43E2 & 2.54E-9\\
& 1000 & 84000 & 1197 & 1.31E3 &  7.69E-10 & 1211 &
1.34E3 & 7.42E-9\\
\hline
\end{tabular}
\end{center}
\caption{Example 2: Results for the elastic scattering of multiple
particles by using the scattering matrix based method with FMM acceleration.}	
\label{tab13}
\end{table}

\subsection{Example 3: plane incidence wave}

For the third example, we evaluate the elastic scattered field of a large number
of particles by plane wave incidence, which is given by
\begin{eqnarray*}
{\bf u}^{\rm inc} = d e^{{\rm i}k_p x\cdot d}+d^{\perp}e^{{\rm i}k_s x \cdot d},
\end{eqnarray*}
where $d$ is the propagation direction. In our test, we choose
$d = (\cos(-\frac{\pi}{3}),\sin(-\frac{\pi}{3}))$. The location of
particles are randomly distributed in a fixed region which is the same as that
in Example 2. The transformation of plane wave into the local expansion
\eqref{besselexpan} is given by the Jacobi--Anger identity\cite{OLBC-10}.
Numerical results for the plane wave incidence are given in
Figure \ref{figure_ex3} and Table \ref{tab14}. Comparing the results between
Tables \ref{tab12} and \ref{tab14}, we find that the number of iterations for
the plane wave incidence is similar to the one with the point source incidence.
In particular, the results for both the point source incidence and the plane
wave incidence show that the number of iterations for GMRES does depend on
the size of particles but is almost independent of the shape of particles. This
fact is further illustrated by Figure \ref{figure_ex3}, since the fields for two
different kind particles looks almost identical. Again, the conclusion may only
hold if we restrict in the subwavelength regime. Another observation from 
Figure \ref{figure_ex3} is that when the average distance among particles is
small, the scattered field acts as if there exists a large obstacle. How
to quantify such an equivalence will be explored in our future investigation. 

\begin{figure}[t]
\centering
\includegraphics[width=0.45\linewidth]{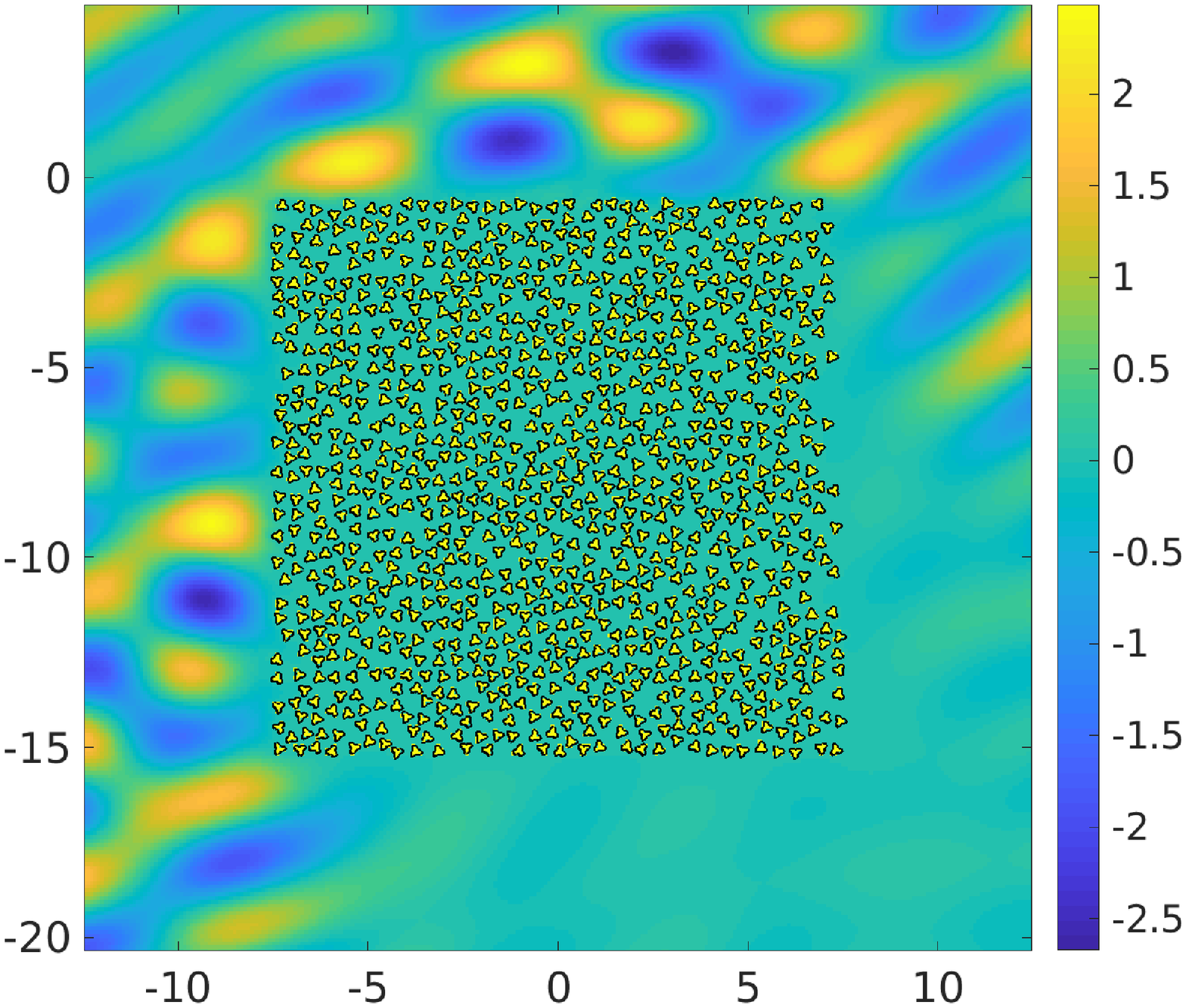}
\includegraphics[width=0.45\linewidth]{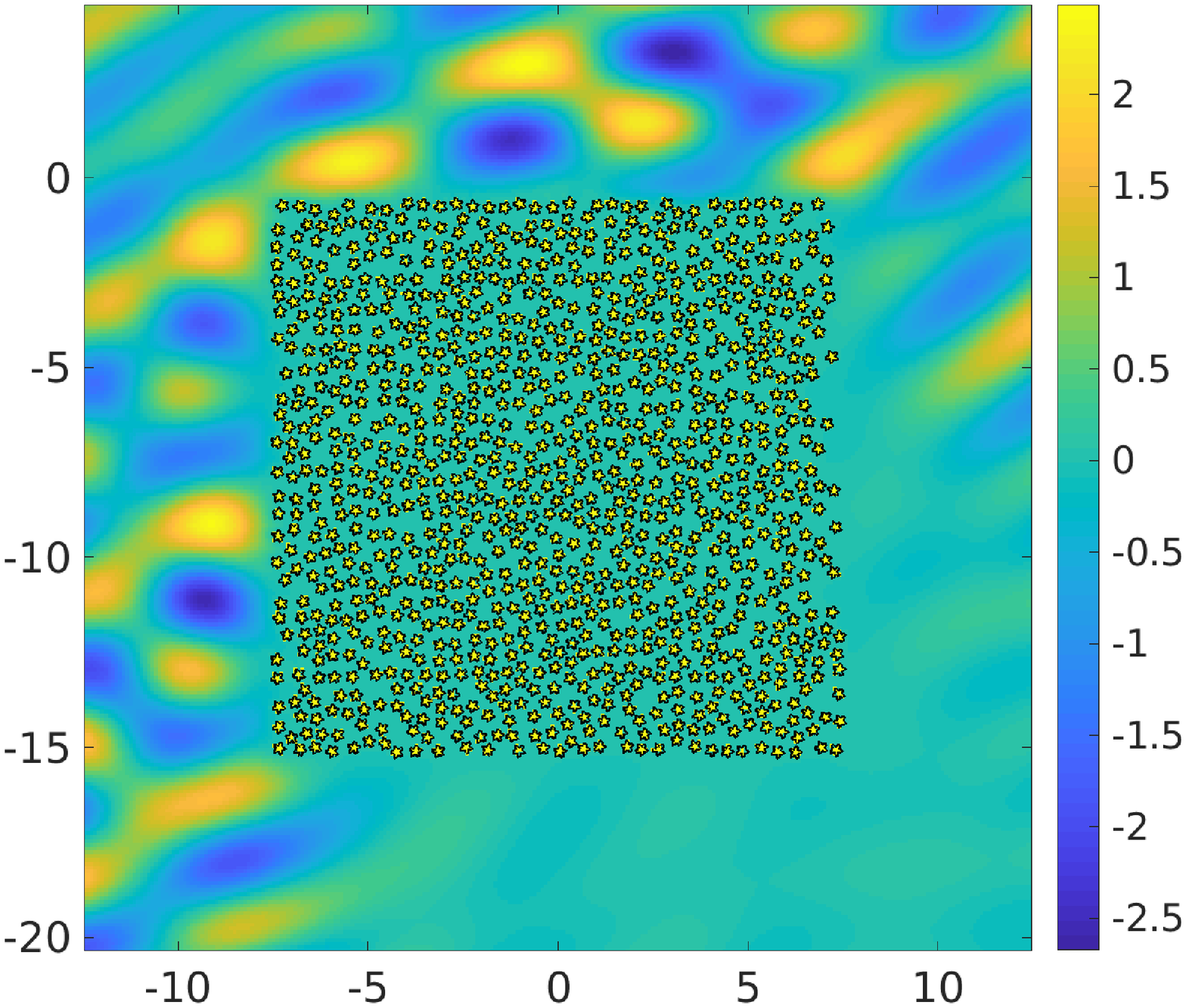}\\
(a) \hspace{7cm} (b)\\
\includegraphics[width=0.45\linewidth]{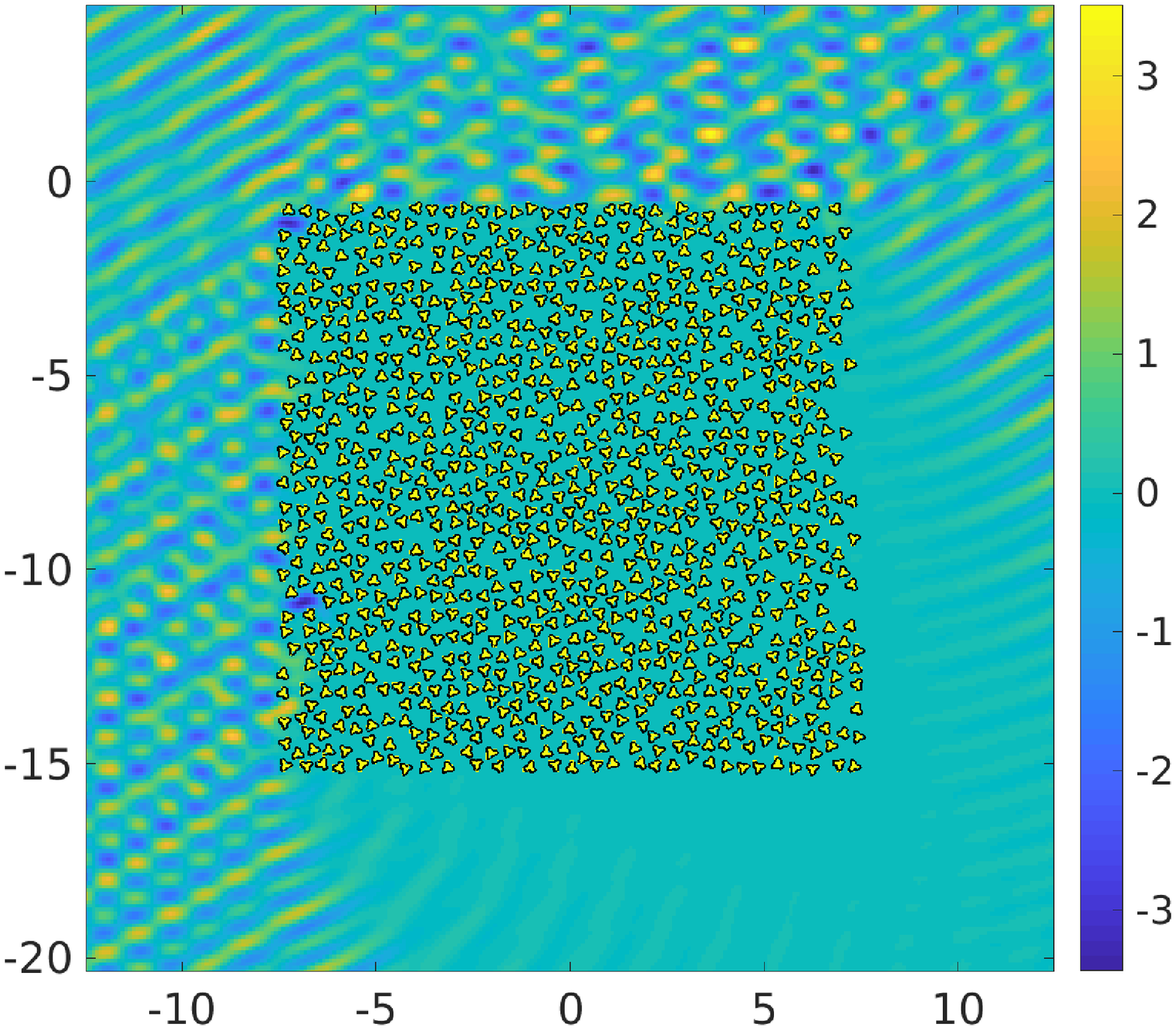}    
\includegraphics[width=0.45\linewidth]{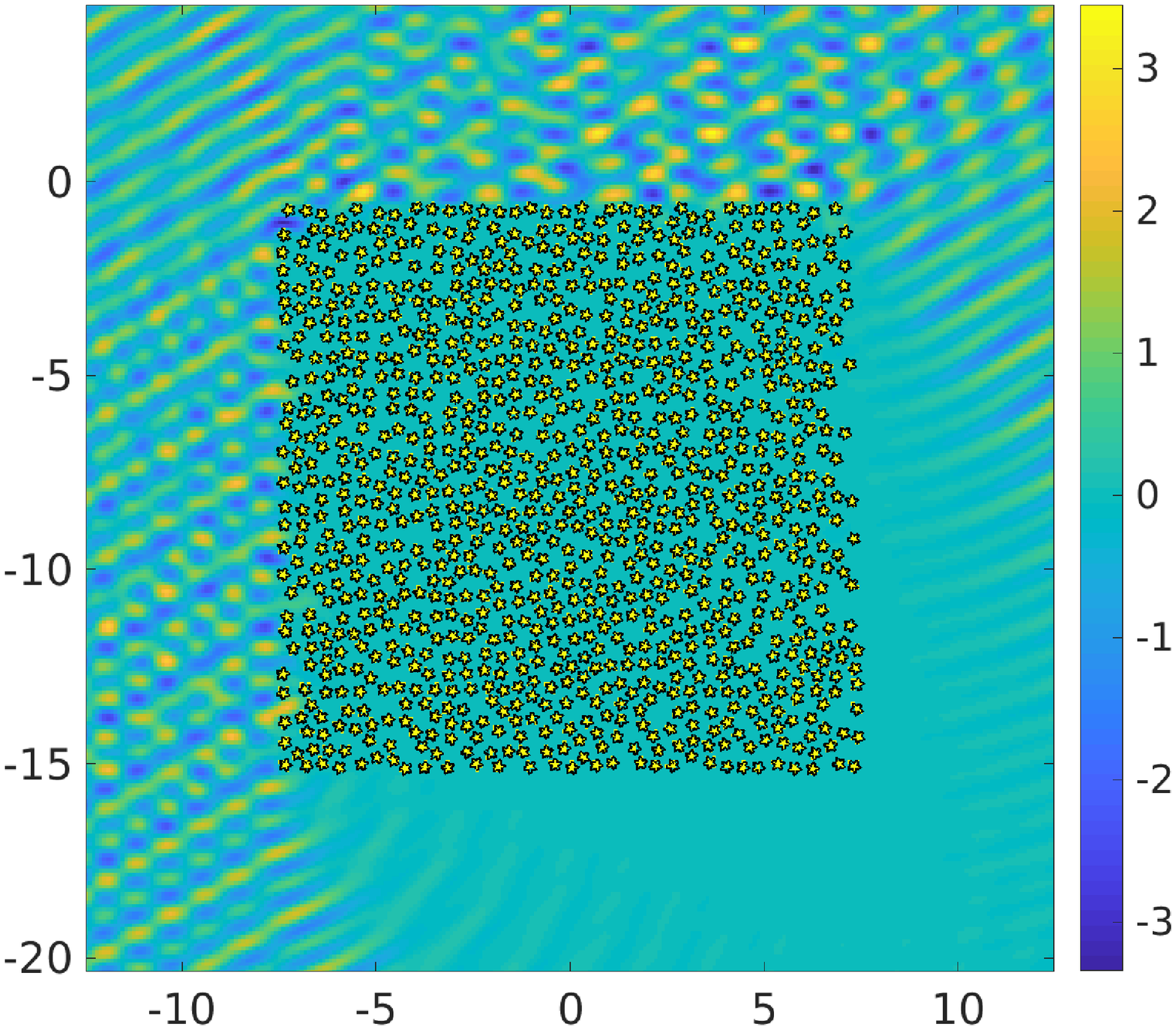}\\
(c) \hspace{7cm} (d)
\caption{The elastic scattering of 1000 particles by the plane wave incidence in
Example 3. Here we show the real part of the first component of the total
elastic field. (a) Field for $a=\frac{1}{8}$, $b=\frac{1}{24}$, $c=3$, 
$\omega=\pi$. (b) Field for $a=\frac{1}{8}$, $b=\frac{1}{24}$, $c=5$, 
$\omega=\pi$. (c) Field for $a=\frac{1}{8}$, $b=\frac{1}{24}$, $c=3$,
$\omega=4\pi$. (d) Field for $a=\frac{1}{8}$, $b=\frac{1}{24}$, $c=5$, 
$\omega=4\pi$.} \label{figure_ex3}
\end{figure}

\begin{table}
\begin{center}
\begin{tabular}{ c| c c| c c c | c c c}
\hline
&  &  &$a = \frac{1}{8}$, &  $b = \frac{1}{24}$, & $c = 3$  & $a = \frac{1}{8}
$, & $b = \frac{1}{24}$, & $c = 5$   \\
\hline
$\omega$ & $N_{particle}$ & $N_{tot}$ &$N_{iter}$ & 
$T_{solve}$ & $E_{err}$  &$N_{iter}$ &  $T_{solve}$ & $E_{err}$   \\
\hline
& 100 & 8400 & 64 & 4.49E0 & 1.75E-9 & 66 & 3.68E0 &
8.66E-10  \\
$\pi$ & 500 & 42000 & 140 & 5.62E1 &  2.21E-9 & 142 &
5.79E1 & 3.33E-9 \\
& 1000 & 82000 & 221 & 1.47E2 &  2.44E-9 & 224 & 1.55E2
& 4.97E-9\\
\hline
& 100 & 8400 & 101 & 7.36E0 & 7.76E-10   & 103 & 6.34E0
& 3.85E-9\\
$2\pi$ & 500 & 42000 & 271 & 1.17E2 &  1.32E-9 & 273 &
1.19E2 & 7.06E-9\\
& 1000 & 84000 & 384 & 2.93E2 &  2.09E-9 & 391 & 2.93E2
& 1.01E-8\\
\hline
& 100 & 8400 & 287 & 1.91E1 & 1.05E-8   & 270 & 1.89E1 &
3.48E-8\\
$4\pi$ & 500 & 42000 & 693 & 3.70E2 &  6.21E-9 & 727 &
4.10E2 & 1.01E-8\\
& 1000 & 84000 & 1459 & 1.95E3 &  1.03E-9 & 1433 &
1.78E3 & 2.62E-8\\
\hline
\end{tabular}
\end{center}
\caption{Example 3: Results for the elastic scattering of multiple
particles by using the scattering matrix based method with the FMM
acceleration.}	
\label{tab14}
\end{table}

\section{Conclusion}

In this paper, we have studied the elastic scattering problem with multiple
rigid particles by using the Helmholtz decomposition. Three different
integral formulations are presented for the coupled Helmholtz system. Their
well-posedness are studied by using appropriate regularizers. A fast
numerical method is proposed for the elastic scattering of multiple arbitrarily
shaped obstacles. The idea is to construct the scattering matrix based on the
proposed integral formulation for a single particle, and then extend the
multiple scattering theory from acoustic waves to elastic waves. In the
end, the resulted linear equation is solved by the GMRES with the FMM
acceleration. Numerical results show that our algorithm is much faster than the
one that directly discretizes particles by points. In particular, we show that
the method can achieve high order accuracy even for the scattering of up to 1000
elastic particles. The method can be extended to the three-dimensional elastic
wave scattering problem where the Helmholtz decomposition involves a scalar
potential function and a vector potential function. The progress will be
reported elsewhere in the future.

\end{document}